\numberwithin{equation}{section}
\newlist{abbrv}{itemize}{1}
\setlist[abbrv,1]{label=,labelwidth=1in,align=parleft,itemsep=0.1\baselineskip,leftmargin=!}
\newcounter{zlist}
\newcounter{blist}
\newcounter{rlist}
\def\stac#1{\raise-.2cm\hbox{$\stackrel{\displaystyle\otimes}{\scriptscriptstyle{#1}}$}}
\def\cten#1{\raise-.2cm\hbox{$\stackrel{\displaystyle\widehat{\otimes}}
		{\scriptscriptstyle{#1}}$}}
\def\Label#1{\label{#1}\ifmmode\llap{[#1] }\else
	\marginpar{\smash{\hbox{\tiny [#1]}}}\fi}
\def\Label{\label}
\newcounter{c}
\newcommand{\etyk}[1]{\vspace{-7.4mm}$$\begin{equation}\Label{#1}
	\addtocounter{c}{1}}
\renewcommand{\]}{\ifnum \value{c}=1 $$\else \end{equation}\fi}
\newtheorem{proposition}{Proposition}[section]
\newtheorem{lemma}[proposition]{Lemma}
\newtheorem{corollary}[proposition]{Corollary}
\newtheorem{theorem}[proposition]{Theorem}
\theoremstyle{definition}
\theoremstyle{remark}
\newtheorem{remark}[proposition]{Remark}
\newcommand{\R}{{\mathbb R}}
\newcommand{\eps}{{\varepsilon}}
\let\ACMmaketitle=\maketitle
\renewcommand{\maketitle}{\begingroup\let\footnote=\thanks \ACMmaketitle\endgroup}
\title{Groundstate asymptotics for a class of singularly perturbed $p$-Laplacian problems in $\R^N$}
\author[1]{\small Wedad Albalawi\footnote{wsalbalawi@pnu.edu.sa}}
\author[2]{\small Carlo Mercuri \footnote{C.Mercuri@swansea.ac.uk}}
\author[2]{\small  Vitaly Moroz\footnote{V.Moroz@swansea.ac.uk}}
\affil[1]{Princess Nourah bint Abdulrahman University, College of Sciences, PO Box 84428, Riyadh, KSA}
	\affil[2]{Department of Mathematics, Computational Foundry, Swansea University, Fabian Way, Swansea, SA1~8EN, UK}
\date{}
\begin{document}

\maketitle

\begin{abstract}
We study the asymptotic behavior of positive groundstate solutions to the quasilinear elliptic equation
\begin{equation} \tag{$P_\eps$} \label{0e1}
-\Delta_{p} u + \varepsilon u^{p-1} - u^{q-1}  +u^{\mathit{l}-1}  = 0 \qquad  \text{in} \quad  \mathbb{R}^{N}
\end{equation}
where $1<p<N $, $p<q<l<+\infty$ and $\varepsilon> 0 $ is a small parameter.
For $\eps\to 0$, we give a characterisation of asymptotic regimes as a function of the parameters $q$, $l$ and $N$.
In particular, we show that the behavior of the groundstates is sensitive to whether $q$ is less than, equal to, or greater than the critical Sobolev exponent $p^{*} :=\frac{pN}{N-p}$.\\
{\it Keywords: Groundstates, Liouville-type theorems, quasilinear equations, singular perturbation. 2010 Mathematics Subject
 Classification: 35J92 (35B33, 35B53, 35B38) }
 
\end{abstract}

\tableofcontents

\section{Introduction}
The present paper is devoted to the study of positive solutions to the quasilinear elliptic equation
\begin{equation}\tag{$P_\eps$}\label{1s}
-\Delta_{p} u + \varepsilon u^{p-1} -  u^{q-1} + u^{l-1}  = 0 \qquad  \text{in} \quad  \mathbb{R}^{N},
\end{equation}
where $$\Delta_{p} u=\textrm{div}(|\nabla u|^{p-2} \nabla u),$$
is the $p$-Laplacian operator, $1<p<N$, $ p<q<l $ and $ \varepsilon > 0$ is a small parameter.
Our main aim is to understand the behaviour of positive groundstate solutions to \eqref{1s} as $ \varepsilon\to 0$.

\noindent By a solution to \eqref{1s} we mean a weak solution $u_{\varepsilon} \in W^{1,p}(\mathbb{R}^{N}) \cap L^{l}(\mathbb{R}^{N}) $.
These solutions are constructed as critical points of the energy
\begin{equation}\tag{$\mathcal{E_{\varepsilon}}$}\label{1e}
\mathcal{E_{\varepsilon}}(u):= \frac{1}{p} \int_{\mathbb{R^{N}}} |\nabla u|^{p} dx - \int_{\mathbb{R^{N}}} F_{\varepsilon}(u) dx,
\end{equation}	
where $$ F_{\varepsilon}(u)= \int_{0}^{u} \tilde{f}_{\varepsilon}(s) ds,$$
and the expression $\tilde{f}_\eps$ is a suitable bounded truncation of
\begin{equation}\label{feps}
f_{\varepsilon}(s):= - \varepsilon |s|^{p-2}s+ |s|^{q-2} s - |s|^{l-2} s.
\end{equation}
Throughout the paper by groundstate solution to \eqref{1s} we mean a positive weak solution which has the least energy $\mathcal{E_{\varepsilon}}$ amongst all the other non-trivial solutions.

\noindent In the first part of the paper,  for all $1<p<N$ and $ p<q<l $,  we prove the existence of a radial groundstate solution $u_\eps$ of \eqref{1s} for all sufficiently small $ \varepsilon > 0, $ see Theorem \ref{th3.2.1}, extending classical results of Berestycki and Lions \cite{Berestycki-Lions} from the  Laplacian $(p=2)$ to the $ p$-Laplacian setting, for any $ 1<p<N$.
As a byproduct of the method \cite{Berestycki-Lions} which is adapted to the present quasilinear context, the weak solutions to \eqref{1s} which are found, are essentially bounded and decay uniformly to zero as $|x|\rightarrow\infty$. We recall that, as in the known case $p=2$ treated in \cite{Berestycki-Lions}, the symmetry of the solutions is achieved as a limit of a suitable (minimising) sequence of radially decreasing rearrangements constructed from a possibly non-radial minimising sequence. Theorem \ref{th3.2.1} in Section \ref{3.1} summarises all the above results about the existence and basic properties of these groundstates to \eqref{1s}.

\noindent We  point out that for large $\eps>0$ equation \eqref{1s} has no finite energy solutions, so the restriction on the size of $\eps$ is essential for the existence of the groundstates.
The uniqueness (up to translations) of a spherically symmetric groundstate of \eqref{1s} is rather delicate.
For $p\leq 2$, Serrin and Tang \cite{Tang}*{Theorem 4} proved that equation \eqref{1s} admits at most one positive groundstate solution.
For $p>2$ the uniqueness could be also expected but to the best of our knowledge this remains an open question.
We do not study the question of uniqueness in this paper and none of our result rely on the information about the uniqueness of the groundstate to \eqref{1s}.

\noindent The question of understanding the asymptotic behaviour of the groundstates $ u_{\varepsilon}$ of  \eqref{1s} as $\varepsilon\rightarrow 0,$
naturally arises in the study of various bifurcation problems, for which \eqref{1s} at least in the case $p=2$ can be considered as a canonical normal form (see e.g. \cite{Cross}, \cite{van}). This problem may also be regarded as a bifurcation problem for quasilinear elliptic equations
$$-\Delta_{p} u = f_\eps(u)\qquad  \text{in} \quad  \mathbb{R}^{N},$$
whose nonlinearity $f_\eps$ has the leading term in the expansion around zero which coincides with the ones in \eqref{1s}.
Let us also mention that problem \eqref{1s} in the case $p=2$ appears in the study of phase transitions \cites{Cahn,Muratov,Unger}, as well as in the study of the decay of false vacuum in quantum field theories \cite{Coleman}.

\noindent Loosely speaking, to understand the asymptotic behaviour  of the groundstates $ u_{\varepsilon}$ as $\varepsilon\rightarrow 0 $, one notes that elliptic regularity implies that locally the solution $ u_{\varepsilon}$ converges as $\varepsilon\rightarrow 0 $ to a radial solution of the limit equation (see Theorem \ref{246})
\begin{equation}\tag{$P_{0}$}\label{2s}
-\Delta_{p} u - u^{q-1} + u^{l-1}= 0 \qquad  \text{in} \quad  \mathbb{R}^{N}.
\end{equation}
It is known that (here and in the rest of the paper $p^{*}:=\frac{pN}{N-p}$ is the critical Sobolev exponent):
when $q\leq p^{*} $ equation \eqref{2s} has no non-trivial finite energy solution, by  Poho\v{z}aev's identity \eqref{e3.17};
whereas for $q>p^{*}$ equation \eqref{2s} admits a radial groundstate solution. Existence goes back to Berestycki-Lions \cite{Berestycki-Lions } and Merle-Peletier \cite{Merle} in the case $p=2$ and, in the context of the present paper, it is proved in the general $p$-Laplacian case (see Theorem \ref{11224411}); uniqueness questions have been studied by Tang \cite{Tang Moxun}*{Theorem $ 4.1$}, see also Remark \ref{Tangremark} .\\

\noindent
In Theorem \ref{ss122} we prove using direct variational arguments that, as expected, for $q>p^{*}$ solutions $ u_{\varepsilon}$ converge as $\varepsilon\rightarrow 0 $ to a non-trivial radial groundstate solution to the formal limit equation \eqref{2s}.
The fact that for $q\leq p^{*}$ equation \eqref{2s} has no non-trivial positive solutions, suggests that for $q\leq p^{*} $ the solutions $ u_{\varepsilon}$ should converge almost everywhere, as $\varepsilon\rightarrow 0 ,$ to the trivial zero solution
of equation \eqref{2s} (see estimate \eqref{esubueps}). This however does not reveal any information about the limiting profile of  $ u_{\varepsilon}$.
Therefore, instead of looking at the formally obtained limit equation \eqref{2s},  we are going to show that for $q\leq p^{*} $ solutions $ u_{\varepsilon}$ converge to a non-trivial limit {\em after a rescaling}. The limiting profile of  $ u_{\varepsilon}$ will be obtained from the groundstate solutions of the {\em limit equations} associated with the rescaled equation \eqref{1s}, where the choice of the associated rescaling and limit equation depends on the value of $p$ and on the space dimension $N$ in a highly non-trivial way.

\noindent The convergence of rescaled solutions $ u_{\varepsilon}$  to their limiting profiles will be proved using a variational analysis similar to the techniques developed in \cite{Moroz-Muratov} in the case $p=2$.
Note that the natural energy space for equation \eqref{1s} is the usual Sobolev space
\begin{equation*}
W^{1,p}(\mathbb{R}^{N}):= \Big\{u: u \in L^{p}(\mathbb{R}^{N}) \quad  \text {and} \quad   \nabla u \in L^{p}(\mathbb{R}^{N})\Big \},
\end{equation*}
with the norm $$||u||_{1,p} = ||u||_{p}+||\nabla u||_{p} , $$
while for $q> p^{*} $ the natural functional setting associated with the limit equation \eqref{2s} is the homogeneous Sobolev space $D^{1,p}(\mathbb{R}^{N}) $ defined for $1<p<N$ as the completion of $C^{\infty}_{0}(\mathbb{R}^{N}) $ with respect to the norm $||\nabla u||_{L^{p}} $. Since $W^{1,p}(\mathbb{R}^{N})\varsubsetneq D^{1,p}(\mathbb{R}^{N}),$ it follows that  no natural perturbation setting (suitable to apply the implicit function theorem or Lyapunov-Schmidt type reduction methods) is available to analyse the family of equations \eqref{1s} as $\varepsilon\rightarrow 0 $.
In fact, even for $p=2$ a linearisation of \eqref{2s} around the groundstate solution is not a Fredholm operator and has zero as the bottom of the essential spectrum in $ L^{2}(\mathbb{R}^{N})$. 
In the case of the $p$--Laplace equations the difficulty in applying classical perturbation methods is even more striking, as for $1<p<2$ the energy associated with the $p$-Laplacian is not twice Fr\'echet differentiable.\\
In order to understand the limiting profile of  $ u_{\varepsilon}$ in the case  $q\leq p^{*} $,
we introduce the {\it canonical} rescaling associated with the lowest order nonlinear term in \eqref{1s}:
\begin{equation}\label{03s}
v_{\varepsilon}(x)=\varepsilon^{-\frac{1}{q-p}}u_{\varepsilon}\big(\frac{x}{\sqrt[p]{\varepsilon}}\big).
\end{equation}
Then \eqref{1s} reads as 
\begin{equation}\tag{$R_{\varepsilon}$}\label{3s}
-\Delta_{p} v+ v^{p-1}  = v^{q-1}  -\varepsilon^{\frac{l-q}{q-p}} v^{l-1}   \qquad  \text{in} \quad  \mathbb{R}^{N},
\end{equation}
from which we formally get, as  $\varepsilon\rightarrow 0,$ the limit problem
\begin{equation}\tag{$R_{0}$}\label{4s}
-\Delta_{p} v+ v^{p-1}  = v^{q-1}     \qquad  \text{in} \quad  \mathbb{R}^{N}.
\end{equation}
We recall that for $q \geq p^{*} $ equation \eqref{4s} has no non-trivial finite energy solutions, as a consequence of Poho\v{z}aev's identity \eqref{e3.17}; whereas
for $ p<q< p^{*} $ equation \eqref{4s} possesses a unique radial groundstate solution. Existence was proved by Gazzola, Serrin and Tang \cite{Gazzola-Serrin-Tang} and uniqueness by Pucci-Serrin \cite{Pucci-Serrin}*{Theorem $2$}.
\noindent The particular rescaling \eqref{03s} allows to have, when $p<q< p^{*} ,$ for both \eqref{3s} and the limit problem \eqref{4s}, a variational formulation on the same Sobolev space $W^{1,p}(\mathbb{R}^{N}) $. This indicates that problem \eqref{3s} could be considered as a small perturbation of the limit problem \eqref{4s}. In particular in the case $p=2$ the family of the groundstates $(v_{\varepsilon}) $ of problem \eqref{3s} could be rigorously interpreted as a perturbation of the groundstate solution of the limit problem \eqref{4s} using the perturbation techniques and framework developed by Ambrosetti, Malchiodi et al., see \cite{Ambrosetti} and references. However, for $p\neq 2$ the Lyapunov--Schmidt reduction technique, in the spirit of \cite{Ambrosetti} is not directly applicable. Instead, in this work, using a direct variational argument inspired by \cite{Moroz-Muratov}*{Theorem $2.1$} we prove (see Theorem \ref{s121}) that for $p<q<p^*$ groundstate solutions $(v_{\varepsilon}) $ of the rescaled problem \eqref{3s} converge to the (unique) radial groundstate of the limit problem \eqref{4s}. \\

\noindent In the critical case $q=p^*$, the limit problem \eqref{4s} has no non-trivial positive solutions. This means that in this case the {\it canonical} rescaling \eqref{03s} does not accurately capture the behaviour of $(u_{\varepsilon}) $. In the present paper, extending the results obtained in \cite{Moroz-Muratov} for $p=2$, we show that for $q=p^{*}$ the asymptotic behaviour of the groundstate solutions to \eqref{1s} after a rescaling is given by a particular solution of the critical Emden-Fowler equation
\begin{equation}\tag{$R_{*}$} \label{5s1}
- \Delta_{p} U = U^{p^{*}-1} \quad \text{in} \; \mathbb{R}^{N}.
\end{equation}
It is well-known that equation \eqref{5s1} admits a continuum of radial groundstate solutions.
We will prove that the choice of the rescaling (and a particular solution of \eqref{5s1}) which provides the limit asymptotic profile for groundstate solutions to equation \eqref{1s} depends on the dimension $N$ in a non-trivial way (see Theorem \ref{s123}).

Wrapping up, we provide a characterisation of the three asymptotic regimes occurring as $\varepsilon\rightarrow0 $, i.e. the subcritical case $q<p^{*} $, the supercritical case $q>p^{*} $ and the critical case $q=p^{*},$ extending the results of \cite{Muratov} and \cite{Moroz-Muratov}, to both a singular ($p<2$) and degenerate ($p>2$) quasilinear setting.

\subsection*{Asymptotic notation}

Throughout the paper we will extensively use the following asymptotic notation. For $\eps \ll 1$ and $f(\eps), g(\eps) \geq 0$, we write $f(\eps)
\lesssim g(\eps)$, $f(\eps) \sim g(\eps)$ and $f(\eps) \simeq
g(\eps)$, implying that there exists $\eps_0 > 0$ such that for every
$0 < \eps \leq \eps_0$: \smallskip

$f(\eps)\lesssim g(\eps)$ if there exists $C>0$ independent of $\eps$
such that $f(\eps) \le C g(\eps)$;

$f(\eps)\sim g(\eps)$ if $f(\eps)\lesssim g(\eps)$ and
$g(\eps)\lesssim f(\eps)$;

$f(\eps)\simeq g(\eps)$ if $f(\eps) \sim g(\eps)$ and $\lim_{\eps\to
	0}\frac{f(\eps)}{g(\eps)}=1$.

\noindent
We also use the standard Landau symbols $f = O(g)$ and $f = o(g)$, with the understanding that $f \geq 0$ and $g \geq 0$.
As usual, $C,c,c_1$, etc., denote generic positive constants
independent of $\eps$.

\section{Main results}

The following theorem summarizes the existence results for the equation \eqref{1s}.
The proof is a standard adaptation of the Berestycki and Lions method \cite{Berestycki-Lions}.
For completeness, we sketch the arguments in Section \ref{3.1}.

\begin{theorem}\label{th3.2.1}
Let $N\ge 2$, $1<p<N$ and $p<q<l$. Then there exists $\eps_*=\eps_*(p,q,l)>0$ such that
 for all $\eps\in(0,\eps_*)$, equation \eqref{1s} admits a groundstate $u_{\varepsilon}\in W^{1,p}(\mathbb{R}^{N}) \cap L^{l}(\mathbb{R}^{N})\cap C^{1,\alpha}_{loc}(\mathbb{R}^{N})$. Moreover, $u_\eps(x)$ is, by construction, a positive monotone decreasing function of $|x|$ and \begin{equation*}
 u_{\varepsilon}(|x|) \leq C e^{-\delta|x|}, \qquad  x \in \mathbb{R}^{N},
 \end{equation*}
 for some $ C, \delta > 0 $. 
\end{theorem}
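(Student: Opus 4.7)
The plan is to follow the Berestycki--Lions constrained minimization strategy \cite{Berestycki-Lions}, adapted to the quasilinear setting by combining a preliminary truncation of $f_\eps$ with Schwarz symmetrization, radial compactness, $p$-Laplacian regularity, and a barrier argument.

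\textbf{Step 1 (Smallness of $\eps$).} A direct computation gives $F_\eps(s)=-\tfrac{\eps}{p}s^p+\tfrac{1}{q}s^q-\tfrac{1}{l}s^l$. Maximizing $\tfrac{1}{q}s^q-\tfrac{1}{l}s^l$ in $s>0$ and comparing with $\tfrac{\eps}{p}s^p$ I would identify an explicit threshold $\eps_*=\eps_*(p,q,l)>0$ such that for every $\eps\in(0,\eps_*)$ there exists $\zeta>0$ with $F_\eps(\zeta)>0$. This is the Berestycki--Lions nondegeneracy hypothesis (H3); the other hypotheses, namely continuity, $f_\eps(0)=0$, and subcritical growth at infinity (here actually $f_\eps(s)\to-\infty$ because $l>q$), are automatic.

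\textbf{Step 2 (Truncation and constrained minimization).} Cut $f_\eps$ off above a level $s_0$ chosen beyond the largest zero of $f_\eps$ so that $\tilde f_\eps$ is bounded, eventually zero, and $\tilde F_\eps(\zeta)=F_\eps(\zeta)>0$. On $W^{1,p}(\R^N)$ consider
\[
I_\eps:=\inf\Big\{T(u):=\tfrac{1}{p}\int_{\R^N}|\nabla u|^p\,dx\ :\ V(u):=\int_{\R^N}\tilde F_\eps(u)\,dx=1\Big\}.
\]
Standard scaling $u_t(x)=u(x/t)$ together with $F_\eps(\zeta)>0$ ensures $I_\eps\in(0,\infty)$.

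\textbf{Step 3 (Symmetrization and compactness).} Given a minimizing sequence $(u_n)$, pass to the Schwarz symmetrizations $u_n^*$. The P\'olya--Szeg\H o inequality in $W^{1,p}(\R^N)$ gives $T(u_n^*)\le T(u_n)$, while the equimeasurability of the rearrangement yields $V(u_n^*)=V(u_n)$; hence $(u_n^*)$ is still a minimizing sequence, radial and nonincreasing. The Strauss radial lemma together with boundedness of $(u_n^*)$ in $W^{1,p}(\R^N)$ yields uniform pointwise decay at infinity and strong convergence in $L^r(\R^N)$ for $p<r<p^*$ along a subsequence; since $\tilde f_\eps$ is bounded and $\tilde F_\eps$ is controlled by such powers (in fact compactly supported above $s_0$), passing to the limit in $V$ gives a radial nonnegative minimizer $u_\eps$.

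\textbf{Step 4 (Euler--Lagrange and rescaling).} A Lagrange multiplier $\lambda>0$ yields $-\Delta_p u_\eps=\lambda\tilde f_\eps(u_\eps)$ in $\R^N$. The rescaling $v(x):=u_\eps\big(x/\lambda^{1/p}\big)$ satisfies $-\Delta_p v=\tilde f_\eps(v)$. Using $\tilde f_\eps(s)\le 0$ for $s\ge s_0$ and testing against $(v-s_0)_+$ shows $v\le s_0$ pointwise, so $\tilde f_\eps(v)=f_\eps(v)$ and $v$ solves \eqref{1s}. The groundstate property follows by a standard comparison of $\mathcal E_\eps$-values via the Poho\v zaev--Nehari identity: any nontrivial solution produces an admissible competitor for the minimization problem after Schwarz rearrangement.

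\textbf{Step 5 (Regularity, positivity, decay).} The $C^{1,\alpha}_{\mathrm{loc}}$ regularity is the DiBenedetto--Tolksdorf theorem applied to the bounded right-hand side $f_\eps(v)\in L^\infty$; the strong maximum principle of V\'azquez turns $v\ge 0$, $v\not\equiv 0$, into $v>0$. For the exponential decay, once $v(|x|)\to 0$ as $|x|\to\infty$ we have $f_\eps(v)\le -\tfrac{\eps}{2}v^{p-1}$ on $\{|x|\ge R\}$ for $R$ large; I would then build an exponential radial supersolution $w(r)=Ce^{-\delta r}$ for $\delta=\delta(\eps,p)>0$ (chosen so $-\Delta_p w+\tfrac{\eps}{2}w^{p-1}\ge 0$ outside a ball, using the radial expression of $\Delta_p$) and apply the weak comparison principle for $-\Delta_p+\tfrac{\eps}{2}(\cdot)^{p-1}$ on $\{|x|\ge R\}$ to conclude $v\le w$.

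The main obstacle is the lack of compactness in $\R^N$, compounded by the fact that the natural $p$-Laplacian energy is only once Fr\'echet differentiable when $1<p<2$ and the nonlinearity $f_\eps$ is sign-changing. The rearrangement step, which relies on P\'olya--Szeg\H o for $L^p$ gradients, is what rescues compactness; the bounded truncation is what makes the strong $L^r$ convergence sufficient to pass to the limit in the constraint $V(u)=1$.
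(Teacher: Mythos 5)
Your proposal is correct and follows essentially the same route as the paper: the Berestycki--Lions constrained minimization with a bounded truncation of $f_\eps$, Schwarz symmetrization and radial compactness to get a radially decreasing minimizer, a Lagrange multiplier plus dilation to pass to a solution of \eqref{1s}, DiBenedetto regularity and the strong maximum principle, and the Poho\v{z}aev-based energy comparison to certify the least-energy property. The only deviations are cosmetic: your truncation is eventually zero rather than equal to $-\eps$ above $u=1$, and you prove the exponential decay by an explicit radial supersolution $Ce^{-\delta r}$ with comparison, where the paper simply invokes Gazzola--Serrin; both variants are valid and the level of detail in the compactness step (Fatou on the $-\tfrac{\eps}{p}u^p$ part plus a dilation to restore the constraint, which you gloss over) matches the sketch given in the paper itself.
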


For $p\leq 2$, Serrin and Tang proved \cite{Tang}*{Theorem 4} that equation \eqref{1s} admits at most one positive groundstate solution.
For $p>2$ the uniqueness to the best of our knowledge remains an open question.
As anticipated earlier, none of our subsequent results rely on the uniqueness of groundstates of $(P_\eps)$.
In what follows, $u_\eps$ always denotes {\it a} groundstate solution to \eqref{1s} constructed in Theorem \ref{th3.2.1} for an $\eps\in(0,\eps_*)$. When we say that groundstates $u_\eps$ converge to a certain limit (in some topology) as $\eps\to 0$, we understand that for every $\eps>0$ a groundstate of \eqref{1s} is selected, so that $(u_\eps)_{\eps\in(0,\eps_*)}$ is a branch of groundstates of \eqref{1s}, which is not necessarily continuous in $\eps$. In the present work we study the limit behaviour of such a branch of groundstates when $\eps\to 0$.

\subsection{Subcritical case $p<q< p^{*}$}

As anticipated earlier, since in the subcritical case the formal limit equation \eqref{2s} has no groundstate solutions, the family of groundstates $u_{\varepsilon}$ must converge to zero, uniformly on compact subsets. We describe the asymptotic behaviour of $ u_{\varepsilon}$ performing the rescaling \eqref{03s} which transforms \eqref{1s} into equation \eqref{3s}.
In Section \ref{5}, using the variational approach developed in the main part of this work we prove the following result, which extends \cite{Moroz-Muratov}*{Theorem $2.1$} to the case $p\neq 2$.

\begin{theorem}\label{s121}
Let $N\ge 2$, $1<p<N$, $p<q<p^{*}$ and $(u_{\varepsilon})$ be a family of groundstates of $(P_\eps)$.
As $\varepsilon \rightarrow 0 $, the rescaled family 
\begin{equation}\label{e-canon-resc}
v_{\varepsilon}(x):=\varepsilon^{-\frac{1}{q-p}}u_{\varepsilon}\Big(\frac{x}{\sqrt[p]{\varepsilon}}\Big)
\end{equation}
converges in $ W^{1,p}(\mathbb{R}^{N})$, $L^{l}(\mathbb{R}^{N})$ and $ C_{loc}^{1,\alpha}(\mathbb{R}^{N})$
to the unique radial groundstate solution $v_{0}(x) $ of the limit equation \eqref{4s}.
In particular,
\begin{equation}\label{esubueps}
u_{\varepsilon}(0)\simeq\varepsilon^{\frac{1}{q-p}} v_{0}(0).
\end{equation}
\end{theorem}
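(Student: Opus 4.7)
The plan is to carry out a direct variational analysis of the rescaled family $(v_\eps)$ along the lines of \cite{Moroz-Muratov}*{Theorem 2.1}, adapted to the quasilinear setting. A direct computation shows that the rescaling \eqref{e-canon-resc} transforms energies via $\mathcal{E_\eps}(u_\eps) = \eps^{q/(q-p)-N/p}\, J_\eps(v_\eps)$, where
\[
J_\eps(v) := \frac{1}{p}\int_{\R^N}\bigl(|\nabla v|^p + v^p\bigr)\,dx - \frac{1}{q}\int_{\R^N}v_+^q\,dx + \frac{\eps^{(l-q)/(q-p)}}{l}\int_{\R^N}v_+^l\,dx.
\]
Critical points of $J_\eps$ on $W^{1,p}(\R^N)\cap L^l(\R^N)$ are precisely the solutions of $(R_\eps)$, and groundstates of $\mathcal{E_\eps}$ correspond under \eqref{e-canon-resc} to groundstates of $J_\eps$ at a level $c_\eps$. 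For $\eps=0$ the functional $J_0$ is subcritical and admits a unique positive radial groundstate $v_0$, with energy $c_0 := J_0(v_0)$, by \cite{Gazzola-Serrin-Tang} and \cite{Pucci-Serrin}.

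My first step is to establish $c_\eps \to c_0$. Testing $J_\eps$ with $v_0$ suitably rescaled onto the Nehari manifold of $J_\eps$ yields $c_\eps \le c_0 + o(1)$, since the perturbation carries the vanishing factor $\eps^{(l-q)/(q-p)}$. The reverse estimate $c_0 \le c_\eps + o(1)$ follows symmetrically by using $v_\eps$, projected onto the Nehari manifold of $J_0$, as a competitor. Combining this convergence with the Nehari and Pohozaev identities for $(R_\eps)$ produces a uniform $W^{1,p}$ bound on $v_\eps$ and the smallness estimate $\eps^{(l-q)/(q-p)}\|v_\eps\|_l^l = o(1)$.

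Since $u_\eps$ is radial and monotone decreasing by Theorem \ref{th3.2.1}, so is each $v_\eps$. Radial compactness (Strauss' lemma) then gives, along a subsequence, $v_\eps \rightharpoonup \bar v$ weakly in $W^{1,p}(\R^N)$ and strongly in $L^s(\R^N)$ for all $p<s<p^*$, while $C^{1,\alpha}_{loc}$ regularity for the $p$-Laplacian upgrades this to $v_\eps \to \bar v$ in $C^{1,\alpha}_{loc}(\R^N)$. Passing to the limit in $(R_\eps)$ and using the $o(1)$ control on the $l$-term, $\bar v$ is identified as a nonnegative radial solution of the limit equation \eqref{4s}. To exclude $\bar v \equiv 0$ I would combine the Nehari identity with $c_\eps \gtrsim 1$ to obtain $\|v_\eps\|_q^q \gtrsim 1$ uniformly, and then use the radial monotone structure together with Strauss' pointwise radial bound to deduce $v_\eps(0)\gtrsim 1$, ruling out vanishing. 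Pucci-Serrin uniqueness \cite{Pucci-Serrin}*{Theorem 2} then forces $\bar v = v_0$, so the whole family converges (no passage to a subsequence is needed). Strong convergence in $W^{1,p}$ and $L^l$ follows from the energy identity $J_\eps(v_\eps) \to J_0(v_0)$ and a Brezis-Lieb decomposition, while \eqref{esubueps} is immediate from $u_\eps(0) = \eps^{1/(q-p)} v_\eps(0)$ and the $C^{1,\alpha}_{loc}$ convergence at the origin, together with $v_0(0) > 0$ from the strong maximum principle.

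The main obstacle, as anticipated in the Introduction, is the non-triviality of the weak limit $\bar v$: classical perturbation or Lyapunov-Schmidt tools are unavailable here because $W^{1,p}\subsetneq D^{1,p}$ and because for $1<p<2$ the $p$-Laplace energy is not twice Fréchet differentiable, so non-vanishing must be extracted ``by hand'' from uniform energy and Nehari bounds, as sketched above. Upgrading the strong convergence from the subcritical range $p<s<p^*$ up to the supercritical exponent $l$, where radial compactness is not automatic, is the other delicate point and will be handled through the energy identity together with the Brezis-Lieb argument.
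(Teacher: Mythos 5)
Your overall strategy (compare energy levels of the rescaled problem \eqref{3s} with those of the limit problem \eqref{4s}, extract a radial limit by compactness, identify it via Pucci--Serrin uniqueness, and finish with elliptic estimates) is the same as the paper's, but the step on which everything hinges --- the two-sided comparison $c_\eps\to c_0$ --- is carried out via Nehari manifolds, and this is where there is a genuine gap. For the perturbed functional $J_\eps$ the nonlinearity is $f(s)=-s^{p-1}+s^{q-1}-\eps^{\frac{l-q}{q-p}}s^{l-1}$, so $f(s)/s^{p-1}$ is not monotone and the fibering maps $t\mapsto J_\eps(tv)$ tend to $+\infty$; consequently projections onto $\mathcal N_\eps$ are neither unique nor well-behaved, and there is no a priori identity between the least-energy level $c_\eps$ and $\inf_{\mathcal N_\eps}J_\eps$. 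Both of your inequalities ($c_\eps\le c_0+o(1)$ by projecting $v_0$ onto $\mathcal N_\eps$, and $c_0\le c_\eps+o(1)$ by comparing $J_\eps(t v_\eps)$ with $J_\eps(v_\eps)$) implicitly use such a characterisation, so as written they do not follow. The paper avoids this entirely: it keeps the Berestycki--Lions/Poho\v{z}aev constrained formulation \eqref{5p}, for which the least-energy property is already built in (Theorem \ref{th3.2.9}); since $G_\eps$ is monotone decreasing in $\eps$, one gets $S'_0\le\mathcal S'_0(w_\eps)<S'_\eps$ for free, and a single test with $w_0$ gives $S'_\eps\le S'_0+O(\eps^{\frac{l-q}{q-p}})$. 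The Nehari and Poho\v{z}aev identities (as in Lemma \ref{314}) then yield the norm limits \eqref{03055}, after which a dilation $\lambda_\eps\to1$ places $w_\eps$ on the limit constraint and the compactness of the Berestycki--Lions scheme gives strong $W^{1,p}$ convergence. If you want to keep a ``free'' functional formulation, you should replace the Nehari manifold by the Poho\v{z}aev constraint, or prove separately that the groundstate level of $J_\eps$ admits the minimax/constrained characterisation you use.

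A second, smaller but real, gap is in the endgame. Your $C^{1,\alpha}_{loc}$ convergence (and hence \eqref{esubueps}) needs a \emph{uniform local $L^\infty$ bound} on $v_\eps$: the radial Strauss estimate only gives $v_\eps(x)\lesssim|x|^{-(N-p)/p}$, which is useless at the origin, and DiBenedetto's estimates require a uniformly bounded right-hand side. The paper supplies this by adapting Lemma \ref{13120} (with the $L^{p^*}$ norm) to get $\|w_\eps\|_\infty\lesssim1$. Also, your route to $L^l$ convergence via ``energy identity plus Brezis--Lieb'' does not close: the $l$-term in $J_\eps$ carries the vanishing weight $\eps^{\frac{l-q}{q-p}}$ and is absent from $J_0$, so the energy identity gives no control of $\|v_\eps\|_l$ itself, and $v_\eps$ is not even obviously bounded in $L^l$ without the uniform $L^\infty$ bound. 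Once $\|v_\eps\|_\infty\lesssim1$ is in hand, $L^l$ convergence follows by interpolating between the subcritical strong convergence and the $L^\infty$ bound, which is how the paper concludes.
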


\subsection{Critical case $q=p^{*}$}
In this case we show that after a suitable rescaling the correct limit equation for \eqref{1s} is given by the critical Emden-Fowler equation
\begin{equation}\tag{$R_{*}$}\label{5s}
- \Delta_{p} U = U^{p^{*}-1} \quad \text{in} \; \mathbb{R}^{N}.
\end{equation}
It is well-known by Guedda-Veron \cite{Guedda} that the only radial solution to \eqref{5s} is given, up to the sign, by
the family of rescalings
\begin{equation} \label{9s}
U_{\lambda}(|x|):= U_1(|x|/\lambda)\qquad(\lambda>0),
\end{equation}
where
\begin{equation}\label{111s}
U_{1}(|x|):= \left[\frac{\kappa^{1/p'}N^{1/p}}{1+|x|^{p'}}\right]^{\kappa/p'},
\end{equation}
and where $ p':=\frac{p}{p-1} $ and $ \kappa:=\frac{N-p}{p-1} $. Recently in \cite{FaMeWi} it has been observed that $\pm U_{\lambda}$ are the only nontrivial radial solutions to $\Delta_pu+|u|^{p^*-2}u=0$ in $D^{1,p}(\R^N)$. Sciunzi \cite{Sciunzi} and V\'etois \cite{Vet}, respectively in the ranges $p>2$ and $p<2,$ proved that any positive solution to \eqref{5s} in $D^{1,p}(\R^N)$ is necessarily radial about some point; this combined with \cite{Guedda} gives a complete classification of the positive finite energy solutions to \eqref{5s}. 

Our main result in this work is the following theorem, which extends \cite{Moroz-Muratov}*{Theorem $2.5$} to the case $p\neq 2$.

\begin{theorem}\label{s123}
Let $N\ge 2$, $1<p<N$, $p^{*}=q<l$ and $(u_{\varepsilon})$ be a family of groundstates of $(P_\eps)$. 
There exists a rescaling
\begin{equation}
\lambda_{\varepsilon}:(0,\varepsilon_{*})\rightarrow(0,\infty)
\end{equation}
such that as $\varepsilon\rightarrow 0 $, the rescaled family 
$$ v_{\varepsilon}(x):= \lambda_{\varepsilon}^{\frac{N-p}{p}} u_{\varepsilon}(\lambda_{\varepsilon}x)$$
converges in $D^{1,p}(\mathbb{R}^{N})$
to the radial groundstate solution $U_{1}(x)$ of the Emden--Fowler equation \eqref{5s}.
Moreover,
\begin{equation}\label{22s}
\lambda_{\varepsilon}\gtrsim \begin{cases}
\varepsilon^{-\frac{p^{*}-p}{p(l-p)}} &  \;1<p<\sqrt{N},\\
\Big(\varepsilon (\log \frac{1}{\varepsilon})\Big)^{-\frac{(p^{*}-p)}{p(l-p)}} & p=\sqrt{N},\\
\varepsilon^{-\frac{1} {[(l-p^{*})(p-1)+p]}} & \sqrt{N}<p<N,
\end{cases}
\end{equation}
and
\begin{equation}\label{3.23+}
\lambda_{\varepsilon}\lesssim \begin{cases}
\varepsilon^{-\frac{p^{*}-p}{p(l-p)}} &  \;1<p<\sqrt{N},\\
\varepsilon^{-\frac{(p^{*}-p)}{p(l-p)}}\Big( \log \frac{1}{\varepsilon}\Big)^{\frac{(l-p^{*})}{p(l-p)}} & p=\sqrt{N},\\
\varepsilon^{-\frac{(p^{2}-N)(l-p^{*})+p^{2}} {p^{2}[(l-p^{*})(p-1)+p]}} & \sqrt{N}<p<N.
\end{cases}
\end{equation}
\end{theorem}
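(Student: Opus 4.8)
\textbf{Proof plan for Theorem \ref{s123}.}

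The plan is to follow the strategy of \cite{Moroz-Muratov}*{Theorem $2.5$}, which consists of three ingredients: (i) sharp two-sided energy bounds for the groundstate $u_\eps$, showing $\mathcal{E_\eps}(u_\eps)\to \mathcal{S}^{N/p}/N$, where $\mathcal{S}$ is the best Sobolev constant in $D^{1,p}$; (ii) a concentration-compactness analysis of the rescaled family $v_\eps$ that forces convergence to a nontrivial solution of \eqref{5s}, which by the Guedda--Veron classification must be some $U_\lambda$; and (iii) a calibration of the rescaling parameter $\lambda_\eps$ obtained by matching the lower-order terms $\eps\int|v_\eps|^p$ and $\int|v_\eps|^l$ against the concentrating Sobolev term, which is precisely where the dimension-dependent trichotomy in \eqref{22s}--\eqref{3.23+} originates. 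First I would record that the natural rescaling which normalises both the Sobolev level and the location of the "mass" is $v_\eps(x)=\lambda_\eps^{(N-p)/p}u_\eps(\lambda_\eps x)$, and choose $\lambda_\eps$ so that, say, $v_\eps(0)=U_1(0)$ or so that a fixed portion of the $D^{1,p}$-norm of $v_\eps$ sits in the unit ball; either normalisation pins down $\lambda_\eps$ up to bounded factors.

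Next I would establish the upper energy bound $\mathcal{E_\eps}(u_\eps)\le \tfrac1N\mathcal{S}^{N/p}+o(1)$ by plugging a truncated, rescaled Aubin--Talenti--type instanton $U_{1}(\cdot/\rho)\chi$ into $\mathcal{E_\eps}$ and optimising in the concentration scale $\rho=\rho(\eps)$: here the three nonlinear/linear terms $\eps\int u^p$, $-\int u^q/q$ (with $q=p^*$), $\int u^l/l$ compete, and the optimal $\rho$ is exactly the quantity appearing on the right of \eqref{22s}. The computation of these integrals for the instanton is classical (it is where the threshold $p=\sqrt N$, equivalently $p^2=N$, enters, because $\int_{B_\rho}|U_1(x/\rho)|^p\,dx$ changes its leading asymptotic behaviour at $p^2=N$ with a logarithmic correction exactly at equality). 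From this test-function computation one reads off the lower bound \eqref{22s} for $\lambda_\eps$. The matching upper bound \eqref{3.23+} I would get from the lower energy estimate: combining the Sobolev inequality with the Pohozaev-type identity \eqref{e3.17} for \eqref{1s} and the a priori bounds on $\|u_\eps\|_{1,p}$ yields $\mathcal{E_\eps}(u_\eps)\ge \tfrac1N\mathcal{S}^{N/p}-o(1)$ together with quantitative information on how fast the defect of compactness decays, and inverting those inequalities bounds $\lambda_\eps$ from above; the asymmetry between \eqref{22s} and \eqref{3.23+} in the range $\sqrt N<p<N$ reflects that the lower bound is governed by the $L^l$ term while the upper bound still feels a residual contribution of the $L^p$ (mass) term that is not perfectly reciprocal.

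For the convergence statement itself I would argue as follows. The energy bounds give $\|v_\eps\|_{D^{1,p}}$ bounded, so up to a subsequence $v_\eps\rightharpoonup v_0$ weakly in $D^{1,p}(\R^N)$; the rescaled equation for $v_\eps$ is $-\Delta_p v_\eps = v_\eps^{p^*-1} + (\text{lower order terms})$ where the lower order terms carry powers of $\lambda_\eps$ that, by the calibration above, tend to $0$ in the relevant dual norm, so $v_0$ solves \eqref{5s} weakly in $D^{1,p}(\R^N)$. The normalisation prevents $v_0\equiv 0$, hence by Guedda--Veron (and the positivity/radial-symmetry results of Sciunzi and V\'etois cited in the excerpt, applied here to the limit which is already radial by construction of $u_\eps$) $v_0=U_\lambda$ for some $\lambda>0$; a second normalisation (or simply rescaling $\lambda_\eps$ by the bounded factor $\lambda$) makes $v_0=U_1$. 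Finally, to upgrade weak to strong convergence in $D^{1,p}$ one compares the energy of $v_\eps$ with that of $U_1$: the matching two-sided energy bounds force $\|\nabla v_\eps\|_p\to\|\nabla U_1\|_p$, which together with weak convergence and the uniform convexity of $L^p$ (for $1<p<\infty$) gives $v_\eps\to U_1$ strongly in $D^{1,p}(\R^N)$, and a subsequence argument promotes this to the full family.

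\textbf{Main obstacle.} The hard part will be the sharp asymptotics of the test-function integrals and the Pohozaev/energy lower bound that together pin down the exponents in \eqref{22s}--\eqref{3.23+}, in particular handling the three regimes $p<\sqrt N$, $p=\sqrt N$, $p>\sqrt N$ simultaneously and keeping track of which nonlinear term dominates in each; the borderline case $p=\sqrt N$ with its logarithmic corrections is the most delicate, and the quasilinear setting adds the technical burden that, unlike $p=2$, the energy is not $C^2$ and the limit equation lives on $D^{1,p}\supsetneq W^{1,p}$, so no linearisation or Lyapunov--Schmidt shortcut is available and everything must be done by direct variational comparison, as emphasised in the introduction.
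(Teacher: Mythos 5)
Your proposal is correct and follows essentially the same route as the paper: the cut-off instanton test-function computation (Lemma \ref{3.6}) gives the sharp upper bound on the gap $\sigma_\eps=S_\eps-S_*$ with the trichotomy at $p=\sqrt{N}$, the Nehari and Poho\v{z}aev identities (Lemmas \ref{314}--\ref{lem2.3.5}) tie $\eps\|w_\eps\|_p^p$ and $\|w_\eps\|_l^l$ to $\sigma_\eps$, the concentration-function normalisation of $\lambda_\eps$ together with the concentration-compactness principle yields strong $D^{1,p}$-convergence to $U_1$ (Lemma \ref{2.3.6}), and the lower and upper bounds on $\lambda_\eps$ come exactly as you indicate from the $L^l$ term and the $\eps L^p$ mass term respectively (Lemma \ref{2.3.7} combined with \eqref{eq3.6}). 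The only caveat is that, of your two proposed normalisations, the norm-in-the-unit-ball one (the paper uses the $L^{p^*}$ concentration function \eqref{319}) is the workable choice, since no pointwise control of $v_\eps(0)$ is available at this stage.
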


\begin{remark}
The lower bound \eqref{22s} on $\lambda_\eps$ can be converted into an upper bound on the maximum of $u_\eps$,
\begin{equation}\label{222s}
u_{\varepsilon}(0)\lesssim
\begin{cases}
\varepsilon^{\frac{l}{(l-p)}}& 1<p<\sqrt{N},\\
(\varepsilon (\log \frac{1}{\varepsilon})\Big)^{\frac{l}{(l-p)}} & p=\sqrt{N},\\
\varepsilon^{\frac{N-p} {p[(l-p^{*})(p-1)+p]}} & \sqrt{N}<p<N,
\end{cases}
\end{equation}
see Corollary \ref{3121}.
\end{remark}

For $1<p<\sqrt{N}$ lower bound \eqref{22s} and upper bound \eqref{3.23+} are equivalent and hence optimal.
For $\sqrt{N}\le p<N$, the upper bounds in \eqref{3.23+} do not match the lower bounds \eqref{22s}.
However, under some additional restrictions we could obtain optimal two--sided estimates.

\begin{theorem}\label{main-cor}
Under the assumptions of Theorem \ref{s123}, we additionally have 
\begin{equation}\label{2255s}
\lambda_{\varepsilon}\sim \begin{cases}
\varepsilon^{-\frac{p^{*}-p}{p(l-p)}} &  \text{$1<p<\sqrt{N}$ and $N\ge 2$},\\
\Big(\varepsilon (\log \frac{1}{\varepsilon})\Big)^{-\frac{(p^{*}-p)}{p(l-p)}} & \text{$p=\sqrt{N}$ and $N\geq4$},\\
\varepsilon^{-\frac{1} {[(l-p^{*})(p-1)+p]}} &\text{$\sqrt{N}<p<\frac{N+1}{2}$ and $N\geq4$},
\end{cases}
\end{equation}
and
\begin{equation}\label{2255ss}
u_{\varepsilon}(0)\sim
\begin{cases}
\varepsilon^{\frac{l}{(l-p)}}& \text{$1<p<\sqrt{N}$ and $N\ge 2$},\\
(\varepsilon (\log \frac{1}{\varepsilon})\Big)^{\frac{l}{(l-p)}} & \text{$p=\sqrt{N}$ and $N\geq4$},\\
\varepsilon^{\frac{N-p} {p[(l-p^{*})(p-1)+p]}} &\text{$\sqrt{N}<p<\frac{N+1}{2}$ and $N\geq4$}.
\end{cases}
\end{equation}
In the above cases $v_\eps$ converges to $U_1(x)$ in $L^{l}(\mathbb{R}^{N})$ and $C^{1,\alpha} _{loc}(\mathbb{R}^{N}).$
\end{theorem}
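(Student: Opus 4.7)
The lower bounds on $\lambda_\varepsilon$ in all three cases of \eqref{2255s} are already supplied by Theorem \ref{s123}, so the task reduces to establishing matching upper bounds in each indicated parameter regime; once $\lambda_\varepsilon$ is pinned down sharply, \eqref{2255ss} follows from the scaling identity $u_\varepsilon(0)=\lambda_\varepsilon^{-(N-p)/p}v_\varepsilon(0)$ together with the pointwise convergence $v_\varepsilon(0)\to U_1(0)$. In the subcase $1<p<\sqrt{N}$ with $N\ge 2$ the upper bound \eqref{3.23+} already matches the lower bound \eqref{22s}, so the rate is immediate and no further work is required.

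For the two remaining subcases the plan is to refine the test-function argument behind \eqref{3.23+}. I would test the energy $\mathcal{E}_\varepsilon$ against a truncated Talenti bubble $w_{\lambda,R}(x):=\phi_R(x)\,U_\lambda(x)$, with $\phi_R$ a smooth cutoff equal to $1$ on $B_R$ and supported in $B_{2R}$, and expand each of the three contributions to $\mathcal{E}_\varepsilon(w_{\lambda,R})$ using the explicit formula \eqref{111s}. The gradient term agrees with $\tfrac{1}{p}\|\nabla U_1\|_p^p$ up to small truncation corrections; the $L^p$-perturbation $\tfrac{\varepsilon}{p}\int w_{\lambda,R}^p\,dx$ is controlled by a profile integral whose character changes across $p=\sqrt{N}$ (finite for $p<\sqrt{N}$, growing like $\log(R/\lambda)$ at $p=\sqrt{N}$, polynomial in $R/\lambda$ for $p>\sqrt{N}$); and the $L^l$-term scales as $\lambda^{N-l(N-p)/p}$ times a convergent integral, since $l>p^{*}$ ensures $U_1\in L^l(\mathbb{R}^N)$. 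Combining the ground-state minimality $\mathcal{E}_\varepsilon(u_\varepsilon)\le\mathcal{E}_\varepsilon(w_{\lambda,R})$ with a matching Poho\v{z}aev-type lower bound on $\mathcal{E}_\varepsilon(u_\varepsilon)$, and optimising jointly in $(\lambda,R)$, yields the sharp upper bounds in \eqref{2255s}. The restrictions $N\ge 4$ and $p<(N+1)/2$ enter precisely at this optimisation step: they are exactly the conditions that make the $L^p$-perturbation the dominant correction in the optimal balance, so that it drives the rate of $\lambda_\varepsilon$ without a spurious contribution from the $L^l$-term.

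Once $\lambda_\varepsilon$ is sharp, I would upgrade the $D^{1,p}$ convergence $v_\varepsilon\to U_1$ of Theorem \ref{s123} to $L^l(\mathbb{R}^N)$ by a Brezis--Lieb and non-concentration argument: the two-sided control on $\lambda_\varepsilon$ rules out both mass escape to infinity and concentration at the origin for the rescaled sequence, while $U_1\in L^l(\mathbb{R}^N)$ since $l>p^{*}$. For $C^{1,\alpha}_{loc}$ convergence, DiBenedetto--Tolksdorf interior regularity applied to the rescaled equation satisfied by $v_\varepsilon$ provides uniform local $C^{1,\alpha}$ bounds: the right-hand side is locally uniformly bounded in $L^\infty$ thanks to the sharp scaling combined with the uniform decay of $u_\varepsilon$ from Theorem \ref{th3.2.1}. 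Any subsequential limit must then agree with $U_1$ by the already established $D^{1,p}$ convergence, which gives convergence of the full family.

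The main obstacle I anticipate is the two-parameter optimisation $(\lambda,R)$ in the test-function energy for the borderline case $p=\sqrt{N}$: the logarithmic factor must emerge with exactly the exponent $-(p^{*}-p)/(p(l-p))$, and this is sensitive to the delicate cancellation between the gradient-truncation error and the $L^p$-perturbation term. Verifying that $p<(N+1)/2$ is indeed the precise threshold above which the $L^l$-correction begins to dominate the $L^p$-correction---and thus prevents further sharpening---is the other delicate point, requiring a careful comparison of the various profile-integral rates as $\lambda$ and $R$ vary.
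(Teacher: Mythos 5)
There is a genuine gap in your plan for the two difficult regimes $p=\sqrt{N}$ and $\sqrt{N}<p<\frac{N+1}{2}$. A refined test-function computation of the type you describe can only produce an upper bound on $S_\varepsilon$, i.e.\ on the gap $\sigma_\varepsilon=S_\varepsilon-S_*$ (and a ``Poho\v{z}aev-type lower bound on $\mathcal{E}_\varepsilon(u_\varepsilon)$'' gives nothing new, since $\mathcal{E}_\varepsilon(u_\varepsilon)=(\tfrac1p-\tfrac1{p^*})S_\varepsilon^{N/p}$, so it again only constrains $S_\varepsilon$). But an upper bound on $\sigma_\varepsilon$ by itself yields only the non-matching upper bound \eqref{3.23+} on $\lambda_\varepsilon$: the concentration scale $\lambda_\varepsilon$ is defined through the actual minimizer $w_\varepsilon$ via \eqref{319}, and to push it down one needs a quantitative \emph{lower} bound on the mass that $v_\varepsilon$ carries far from the origin, which no comparison of energies with bubbles can provide. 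The paper's route is different in exactly this step: from the Nehari--Poho\v{z}aev relations (Lemmas \ref{314} and \ref{lem2.3.5}) one has $\varepsilon\lambda_\varepsilon^{p}\|v_\varepsilon\|_p^p\lesssim\sigma_\varepsilon$, i.e.\ \eqref{3.25}, so the missing ingredient is a blow-up rate for $\|v_\varepsilon\|_p$. This is obtained by constructing a lower barrier $h_\varepsilon(x)=|x|^{-\frac{N-p}{p-1}}e^{-\sqrt[p]{\varepsilon S_\varepsilon}\lambda_\varepsilon|x|}$ and comparing (Lemmas \ref{p93}, \ref{3.1.9}, \ref{3.1.10}), which gives $\|v_\varepsilon\|_p^p\gtrsim(\varepsilon^{1/p}\lambda_\varepsilon)^{-\frac{p^2-N}{p-1}}$ for $\sqrt{N}<p<\frac{N+1}{2}$ and $\|v_\varepsilon\|_p^p\gtrsim\log\frac{1}{\varepsilon^{1/p}\lambda_\varepsilon}$ for $p=\sqrt{N}$ (Lemmas \ref{3.13}, \ref{3.15}), and only then the matching upper bounds of Corollaries \ref{31015} and \ref{31017}. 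Your proposal contains no substitute for this pointwise lower estimate on the groundstate at infinity.

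Consequently your explanation of the restrictions $N\ge4$ and $p<\frac{N+1}{2}$ is also misplaced: they do not arise from any $(\lambda,R)$ optimisation of test functions, but from the barrier construction itself --- Lemma \ref{p93} requires $p\ge2$ (which together with $p\ge\sqrt{N}$ forces $N\ge4$), and the sign condition $N+1-2p>0$ is what makes $h_\varepsilon$ a subsolution of \eqref{3.15*} outside a large ball. Two smaller points: the upgrade to $L^{l}$ and $C^{1,\alpha}_{loc}$ convergence cannot rest on ``the uniform decay of $u_\varepsilon$ from Theorem \ref{th3.2.1}'', since the constants $C,\delta$ there are not uniform in $\varepsilon$; the paper instead derives a uniform $L^{\infty}$ bound for $v_\varepsilon$ from the boundedness of $\|v_\varepsilon\|_l$ by an ODE-type integration of the rescaled inequality (Lemma \ref{13120}), and then applies DiBenedetto regularity and radial compactness (Corollary \ref{3121}). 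Your treatment of the easy case $1<p<\sqrt{N}$ and of the final scaling identity $u_\varepsilon(0)\sim\lambda_\varepsilon^{-\frac{N-p}{p}}v_\varepsilon(0)$ is consistent with the paper.
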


\begin{remark}
In the case $p=2$ and $N\ge 3,$  {\em two--sided} asymptotics of the form \eqref{2255s} were derived in \cite{Muratov} using methods of formal asymptotic expansions. Later, two sided bounds of the form \eqref{2255s} were rigorously established for $p=2$ in \cite{Moroz-Muratov}*{Theorem $2.5$}. The barrier approach developed in \cite{Moroz-Muratov}*{Lemma 4.8} in order to refine upper bounds on $\lambda_\eps$ in the difficult case $\sqrt{N}\leq p<N$ cannot be fully extended to $p\neq 2$, see Lemma \ref{3.1.9}.
In this difficult case, the matching upper bounds of the form \eqref{22s} are valid for $\sqrt{N}\leq p<\frac{N+1}{2}$ and $N\geq4$.
\end{remark}

\begin{remark}
Theorem \ref{main-cor} leaves open the following cases, where matching lower and upper bounds are not available:
\begin{itemize}
\item $N\ge 4$ and $\frac{N+1}{2}\le p <N$
\item $N=3$ and $\sqrt{3}\le p<3$
\item $N=2$ and $\sqrt{2}\le p<2$
\end{itemize}
Note that the case $N=3$ and $p=2$ is not included in Theorem \ref{main-cor}. However, matching bound  
\eqref{2255s} and \eqref{2255ss} remain valid in this case. This is one of the results in \cite{Moroz-Muratov}*{Theorem $2.5$}.
We conjecture that the restriction $p<\frac{N+1}{2}$ is merely technical and is due to the method we use.
\end{remark}

\subsection{Supercritical case $q>p^{*}$}
Unlike the subcritical and critical cases, for $ q>p^{*}$ the formal  limit equation \eqref{2s} admits a nontrivial solution. 
Using a direct analysis of the family of constrained minimization problems associated with \eqref{1s}, we prove the following result, which extends \cite{Moroz-Muratov}*{Theorem $ 2.3$} to the case $p\neq 2$.

\begin{theorem}\label{ss122}
Let $N\ge 2$, $1<p<N$, $p^{*}<q<l$ and $(u_{\varepsilon})$ be a family of groundstates of $(P_\eps)$. As $\varepsilon \rightarrow0$, the family  $u_{\varepsilon} $ converges in $D^{1,p}(\mathbb{R}^N)$, $L^{l}(\mathbb{R}^N)$ and $C^{1,\alpha}_{loc} (\mathbb{R}^{N})$
to a groundstate solution $u_{0}(x) $ of the limit equation \eqref{2s}, with
 \begin{equation*}
u_{0}(x)\sim |x|^{-\frac{N-p}{p-1}} \quad \text{as}\quad |x| \rightarrow \infty.
\end{equation*}
Moreover, it holds that
$$ u_{\varepsilon}(0)\simeq u_{0}(0),$$
and that $\varepsilon||u_{\varepsilon}|_{p}^{p}\rightarrow0. $
\end{theorem}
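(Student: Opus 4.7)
My approach combines a priori estimates coming from Pohozaev-type identities with a direct variational comparison against the limit groundstate.

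The first step is to establish $\eps$-uniform a priori bounds. Testing \eqref{1s} against $u_\eps$ itself and combining with Pohozaev's identity \eqref{e3.17} yields the energy characterisation $\mathcal{E_{\varepsilon}}(u_\eps)=\tfrac{1}{N}\|\nabla u_\eps\|_p^p$ together with a system of two linear relations between the four quantities $\|\nabla u_\eps\|_p^p$, $\eps\|u_\eps\|_p^p$, $\|u_\eps\|_q^q$, $\|u_\eps\|_l^l$. Because $q>p^*$, every coefficient ends up with the correct sign, so each of these quantities is controlled by any single $\eps$-uniform upper bound on $\mathcal{E_{\varepsilon}}(u_\eps)$.

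To produce such an upper bound I would use the Berestycki-Lions scaling characterisation of the mountain-pass level: for any admissible test function $u$ with $\int F_\eps(u)>0$, the dilations $u(\cdot/t)$ give
\[
\mathcal{E_{\varepsilon}}(u_\eps)\;\leq\;\frac{1}{N}\Big(\tfrac{N-p}{pN}\Big)^{(N-p)/p}\frac{\|\nabla u\|_p^{N}}{\big(\int F_\eps(u)\big)^{(N-p)/p}}.
\]
When $p<\sqrt{N}$, the limit groundstate $u_0$ of \eqref{2s} belongs to $L^p(\R^N)$, since its tail $|x|^{-(N-p)/(p-1)}$ is $L^p$-integrable in this regime, and it can be inserted directly to obtain $\mathcal{E_{\varepsilon}}(u_\eps)\leq \mathcal{E}_0(u_0)+o(1)$. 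In the harder range $p\geq\sqrt{N}$, where $u_0\notin W^{1,p}(\R^N)$, I would instead insert a cutoff $u_0\chi_{R_\eps}$ with $R_\eps\to\infty$ chosen so that $\eps\int_{B_{R_\eps}}u_0^p\to 0$, and check that the remaining Sobolev-type integrals still converge to their limiting values for $u_0$.

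With the a priori bounds in hand, the radial monotone decreasing character of $u_\eps$ yields the Strauss-type pointwise decay $u_\eps(r)\lesssim r^{-N/q}$, which gives uniform tail control. Combined with the local $C^{1,\alpha}$ compactness of Theorem \ref{246}, this produces a subsequence converging to some $u_0^*$ strongly in $L^l(\R^N)$ and in $C^{1,\alpha}_{\rm loc}(\R^N)$, and weakly in $D^{1,p}(\R^N)$. Passing to the limit in \eqref{1s} identifies $u_0^*$ as a nonnegative radial solution of the limit equation \eqref{2s}, and the energy inequalities $\mathcal{E}_0(u_0^*)\leq \liminf \mathcal{E_{\varepsilon}}(u_\eps)\leq \mathcal{E}_0(u_0)$ force $u_0^*$ to be a groundstate; in particular it inherits the decay $u_0^*(x)\sim |x|^{-(N-p)/(p-1)}$. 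Convergence of the gradient norms, read off the identity $\mathcal{E_{\varepsilon}}(u_\eps)=\tfrac{1}{N}\|\nabla u_\eps\|_p^p$, upgrades weak to strong convergence in $D^{1,p}(\R^N)$, and the decomposition $\mathcal{E_{\varepsilon}}(u_\eps)=\mathcal{E}_0(u_\eps)+\tfrac{\eps}{p}\|u_\eps\|_p^p$ combined with $\mathcal{E}_0(u_\eps)\to \mathcal{E}_0(u_0^*)$ forces $\eps\|u_\eps\|_p^p\to 0$. The pointwise asymptotic $u_\eps(0)\simeq u_0(0)$ is immediate from $C^{1,\alpha}_{\rm loc}$ convergence and the fact that both groundstates attain their maximum at the origin.

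The main technical obstacle is the upper-bound construction in the regime $p\geq\sqrt{N}$: since the natural competitor $u_0$ falls outside the variational space $W^{1,p}(\R^N)$, a delicate balance between the cutoff radius $R_\eps$ and $\eps$ is required so that the truncated test function remains admissible for the scaling characterisation while its energy stays arbitrarily close to the limiting groundstate energy $\mathcal{E}_0(u_0)$.
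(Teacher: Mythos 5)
Your overall strategy is the same as the paper's (Section \ref{7}): compare the $\varepsilon$-level with the limit level by testing with the limit groundstate $u_0$, with cutoffs $\eta_R u_0$ and an $\varepsilon$-dependent radius when $p\ge\sqrt N$ because $u_0\notin L^p(\mathbb{R}^N)$ there (this is exactly Lemma \ref{24}), then radial compactness, elliptic estimates and the decay lemma. However, your first step contains a claim that is false as stated and that hides the real difficulty. Nehari plus Poho\v{z}aev give, with $A=\|\nabla u_\varepsilon\|_p^p$, $B=\varepsilon\|u_\varepsilon\|_p^p$, the relations $\|u_\varepsilon\|_l^l=c_1A+c_2B$ and $\|u_\varepsilon\|_q^q=A+B+\|u_\varepsilon\|_l^l$ with $c_1,c_2>0$: they control the $q$- and $l$-norms by $A$ and $B$ \emph{jointly}, but they give no bound whatsoever on $B=\varepsilon\|u_\varepsilon\|_p^p$ in terms of the energy $\frac1N A$ alone. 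In the paper this quantity is not obtained from identity bookkeeping but from a genuinely variational argument (Lemma \ref{326}): if $\varepsilon\|w_\varepsilon\|_p^p\not\to0$, the quotient $\mathcal S_0(w_\varepsilon)$ would drop strictly below $S_0$, contradicting $S_\varepsilon\to S_0$. In your scheme $\varepsilon\|u_\varepsilon\|_p^p\to0$ appears only at the very end, as a consequence of energy convergence to the limit groundstate energy, which makes the argument circular (see next point). The uniform bounds you need for compactness are still available, but via $\|u_\varepsilon\|_\infty\le1$, Sobolev and interpolation as in Lemma \ref{325}, not via the identities.

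The more serious gap is nontriviality of the limit. Your chain $\mathcal E_0(u_0^*)\le\liminf\mathcal E_\varepsilon(u_\varepsilon)\le\mathcal E_0(u_0)$ does not exclude $u_0^*\equiv0$: if the limit vanishes all inequalities are consistent and no contradiction arises, and the step ``the energy inequalities force $u_0^*$ to be a groundstate'' implicitly uses $\mathcal E_0(u_0)\le\mathcal E_0(u_0^*)$, which presupposes that $u_0^*$ is a nontrivial solution. Local vanishing is exactly what happens in the subcritical case $q<p^*$, so supercriticality must enter at this point; in your proposal it never does. The paper secures nontriviality by working with the constrained minimizers $w_\varepsilon$ of \eqref{e2.4}: the normalization $p^*\int\tilde F_\varepsilon(w_\varepsilon)\,dx=1$ passes to the limit thanks to $\varepsilon\|w_\varepsilon\|_p^p\to0$ (proved beforehand, independently) and the radial decay/compactness, so the weak limit satisfies the constraint for $(S_0)$ and is a nontrivial minimizer, after which strong $D^{1,p}$ convergence is obtained via a.e. convergence of gradients and Brezis--Lieb. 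To repair your argument you must prove $\varepsilon\|u_\varepsilon\|_p^p\to0$ and a lower bound such as $\liminf\|u_\varepsilon\|_q^q>0$ \emph{before} identifying the limit, using the variational characterisation rather than the identities. Two minor points: you cannot invoke Theorem \ref{246} for the local $C^{1,\alpha}$ compactness, since that theorem is essentially the statement being proved (you mean the DiBenedetto local estimates applied under the uniform $L^\infty$ bound); and the Strauss-type decay $u_\varepsilon(r)\lesssim r^{-N/q}$ is borderline insufficient to control the tails of $\int u_\varepsilon^q\,dx$ --- use $u_\varepsilon(r)\lesssim r^{-N/s}$ for some $p^*<s<q$, which is available from the uniform $L^s$ bounds of Lemma \ref{325}.
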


\subsection{Organisation of the paper}
This paper is organised as follows. Section \ref{BLsection} is devoted to the existence and qualitative properties of groundstates $ u_{\varepsilon}$ to \eqref{1s}; in Section \ref{PDEsection} we deal with existence and qualitative properties of groundstates to the limiting PDEs \eqref{2s}, \eqref{4s}, \eqref{5s1}. Both sections contain various facts about the equation $(P_\eps)$ and limiting equations which are involved in our analysis. In the rest of the paper we study the asymptotic behavior of the groundstates $ u_{\varepsilon}$.
In {Section \ref{6}} we study the most delicate critical case $q=p^{*} $ and prove Theorem \ref{s123} and Theorem \ref{main-cor}. In {Section \ref{7}} we consider the supercritical case $q>p^{*} $ and prove Theorem \ref{ss122}. In {Section \ref{5}} we consider the subcritical case  $q<p^{*}$ and prove Theorem \ref{s121}. For the reader convenience we have collected in the sections A and B of the Appendix some auxiliary results which have been used in the main body of the paper.

\section{Groundstate solutions to (\ref{1s})}\label{BLsection}

\subsection[Necessary conditions and Poho\v{z}aev's identity]{Necessary conditions and Poho\v{z}aev's identity}\label{10.1}
According to Poho\v{z}aev's  classical identity \cite{Pohozaev} for $p$-Laplacian equations, a solution to \eqref{1s} which is smooth enough, necessarily satisfies the identity
\begin{equation}\label{e3.17}
\int\limits_{\mathbb{R}^{N}} |\nabla u|^{p} dx = p^{*} \int\limits_{\mathbb{R}^{N}} F(u) dx,
\end{equation}
for $1< p< N  $.
Identities of this type are classical, see for instance \cite{Pucci} for $C^2$ solutions and \cite{Degiovanni} for bounded domains. In the present paper the following version of Poho\v{z}aev's identity has been extensively used.
\begin{proposition}\label{2.2.12}
Suppose $ f:\mathbb{R} \rightarrow \mathbb{R} $ is a continuous function such that $ f(0) = 0 $, and set $ F(t) = \int_{0}^{t} f(s) ds  $. Let \begin{equation*}
u \in C_{loc}^{1,\alpha}(\mathbb{R}^{N}), \quad \textrm {and} \quad |\nabla u|^p, \,\, F(u)\in L^{1}(\mathbb{R}^{N})
\end{equation*} with $u$ such that
\begin{equation*}
-\Delta_{p} u = f(u)\quad
\end{equation*}
holds in the sense of distributions. Then $ u $ satisfies \eqref{e3.17}.
\end{proposition}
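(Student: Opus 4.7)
The plan is to establish \eqref{e3.17} in two stages: first derive a \emph{local} Pohozaev identity on an arbitrary ball $B_R$, then pass to the limit $R\to\infty$ along a carefully chosen sequence of radii. For a $C^2$ solution the local identity is obtained by multiplying the equation by the dilation vector field $x\cdot\nabla u$ and integrating over $B_R$. On the right-hand side one uses $f(u)(x\cdot\nabla u)=x\cdot\nabla F(u)$ together with the divergence theorem; on the left-hand side one integrates by parts and invokes the pointwise identity
$$|\nabla u|^{p-2}\nabla u\cdot\nabla(x\cdot\nabla u)=\tfrac{p-N}{p}|\nabla u|^p+\tfrac{1}{p}\textrm{div}\bigl(x\,|\nabla u|^p\bigr),$$
valid for smooth $u$. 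The outcome, after rearrangement, is
$$\frac{N-p}{p}\int_{B_R}|\nabla u|^p\,dx-N\int_{B_R}F(u)\,dx=\mathcal{B}(R),$$
where $\mathcal{B}(R)$ is the sum of three surface integrals on $\partial B_R$ involving $F(u)\,(x\cdot\nu)$, $|\nabla u|^p\,(x\cdot\nu)$ and $|\nabla u|^{p-2}\tfrac{\partial u}{\partial\nu}(x\cdot\nabla u)$. Since $u$ is assumed only to be $C^{1,\alpha}_{loc}$, the quantity $\nabla(x\cdot\nabla u)$ does not exist classically and this formal step is the main obstacle. I plan to justify it by a density argument: mollify $u$ by $u_\delta=u*\rho_\delta$ so that $u_\delta\in C^\infty$ and $u_\delta\to u$ in $C^{1,\alpha}(\overline{B_R})$; for $u_\delta$ the pointwise divergence identity above holds, the integration by parts is classical, and every term in the displayed local identity depends continuously on $u$ in the $C^{1,\alpha}(\overline{B_R})$-topology, so the identity passes to the limit $\delta\to 0$. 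Equivalently, one may quote directly the bounded-domain version of Pohozaev's identity from \cite{Degiovanni}, which is designed precisely for the $C^{1,\alpha}$ class, applied on $B_R$.

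The three boundary contributions forming $\mathcal{B}(R)$ are each bounded, using $|x|=R$ on $\partial B_R$ and the Cauchy--Schwarz estimate $|\nabla u|^{p-2}\bigl|\tfrac{\partial u}{\partial\nu}\bigr|\,|x\cdot\nabla u|\le R|\nabla u|^p$, by
$$|\mathcal{B}(R)|\le C\,R\int_{\partial B_R}\bigl(|\nabla u|^p+|F(u)|\bigr)\,dS=:C\,R\,g(R).$$
The co-area formula together with the hypothesis $|\nabla u|^p,F(u)\in L^1(\mathbb{R}^N)$ yields $\int_0^\infty g(R)\,dR<\infty$, hence $g\in L^1(0,\infty)$. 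This forces $\liminf_{R\to\infty}R\,g(R)=0$, for otherwise $g(R)\ge c/R$ for all large $R$, contradicting integrability. Pick a sequence $R_n\to\infty$ with $R_n g(R_n)\to 0$; then $\mathcal{B}(R_n)\to 0$.

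Finally, the interior integrals converge (by monotone convergence for $|\nabla u|^p\ge 0$ and by dominated convergence for $F(u)$, using $F(u)\in L^1(\mathbb{R}^N)$) to the corresponding integrals on $\mathbb{R}^N$. Passing to the limit along $R_n$ in the local identity therefore yields
$$\frac{N-p}{p}\int_{\mathbb{R}^N}|\nabla u|^p\,dx=N\int_{\mathbb{R}^N}F(u)\,dx,$$
which after multiplication by $p/(N-p)$ is precisely \eqref{e3.17}. The delicate point throughout is the first stage; the selection of $R_n$ and the passage to the limit are otherwise routine consequences of the integrability hypothesis.
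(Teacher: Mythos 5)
Your overall skeleton --- the local identity on $B_R$, the bound $|\mathcal{B}(R)|\le C\,R\int_{\partial B_R}(|\nabla u|^p+|F(u)|)\,dS$, the coarea/integrability argument selecting radii $R_n$ with $R_n\,g(R_n)\to 0$, and the final passage to the limit --- coincides with the paper's scheme and is sound. The gap is exactly at the point you yourself flag as the main obstacle, and the proposed fix does not work. Mollifying the solution is of no help: since $\Delta_p$ is nonlinear, $u_\delta=u*\rho_\delta$ solves neither $-\Delta_p u_\delta=f(u_\delta)$ nor $-\Delta_p u_\delta=f(u)*\rho_\delta$, so there is no local Poho\v{z}aev identity for $u_\delta$ available to pass to the limit in; and the identity you want for $u$ contains $\int_{B_R}|\nabla u|^{p-2}\nabla u\cdot\nabla(x\cdot\nabla u)\,dx$, i.e.\ a pairing of $|\nabla u|^{p-2}\nabla u$ with the Hessian of $u$, which for $u$ merely $C^{1,\alpha}_{loc}$ is not defined and is certainly not a functional of $u$ continuous in the $C^{1}(\overline{B_R})$ topology: under mollification the Hessians $D^2u_\delta$ converge only as distributions, and a distribution cannot be multiplied by the continuous factor $|\nabla u|^{p-2}\nabla u$. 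The same obstruction blocks the variant in which one tests the weak equation for $u$ with $\eta\,(x\cdot\nabla u_\delta)$: the term $\int \eta\, x_j\,|\nabla u|^{p-2}\partial_i u\,\partial^2_{ij}u_\delta\,dx$ cannot be integrated by parts without differentiating $|\nabla u|^{p-2}\nabla u$. So the claim that ``every term in the local identity depends continuously on $u$ in the $C^{1,\alpha}(\overline{B_R})$-topology'' is precisely what fails (and, as a minor point, $u*\rho_\delta\to u$ only in $C^{1,\alpha'}$ for $\alpha'<\alpha$).

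This missing step is the actual content of the paper's proof, and it is resolved by regularity or by regularizing the equation, not by smoothing $u$: for $p\le 2$, Tolksdorf's estimate \cite{Tol} gives $u\in W^{2,p}_{loc}(\R^N)$, so the multiplication by $x_i\partial_i u$ and the integration by parts over $B_R$ are legitimate, using a divergence theorem for continuous vector fields with $L^1_{loc}$ divergence as in \cite{Mercuri}; for $p>2$, where second-order Sobolev regularity may fail at critical points of $u$, the paper works with the uniformly elliptic approximations $-\mathrm{div}\big((\varepsilon+|\nabla u_\varepsilon|^2)^{(p-2)/2}\nabla u_\varepsilon\big)=f(u)$ in $B_{2R}$ with boundary datum $u$, whose $C^2$ solutions converge in $C^{1,\alpha}_{loc}$, and performs the $\varepsilon$-limit before sending $R\to\infty$ along the good sequence of radii. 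Your alternative of quoting \cite{Degiovanni} is indeed the legitimate shortcut the paper alludes to, but as written it is a bare citation: the identity there is a variational identity for compactly supported vector fields, so one must still either recover the boundary-term version on $B_R$ by approximating the dilation field, or bypass $\mathcal{B}(R)$ altogether by taking $h_n(x)=x\,\eta(x/n)$ and using $|\nabla u|^p, F(u)\in L^1(\R^N)$; neither step appears in your proposal. As it stands, the first stage --- which you correctly identify as the crux --- is not justified.
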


\begin{proof}

We first assume that $p\leq 2.$ By the classical regularity result of Tolksdorf \cite{Tol}, see also Theorem 2.5 in \cite{PucciC},
we have
\begin{equation*}
u\in W^{2,p}_{loc}(\mathbb R^N),\quad p\leq2.
\end{equation*}
Having checked the existence and local summability of the second weak derivatives in this case we argue as follows. Multiply the equation by $x_i\partial_i u(x)$ and integrate over $B_R=B(0,R)$  and denote by $n(\cdot)$ the outer normal unit vector. Observe that the vector field $$v= x_i\partial_i u  |\nabla u|^{p-2}\nabla u$$
is such that  $v\in C(\R^N,\R^N)$ and $\textrm{div}\, v \in L^1_{loc}(\R^N).$
By the divergence theorem (see e.g. Lemma 2.1 in \cite{Mercuri}) we have $$\int_{B_R} \Delta_p u \,x_i\partial_i u(x)dx=\int_{\partial B_R}|\nabla u(\sigma)|^{p-2} \partial_i u (\sigma)\sigma_i \nabla u \cdot n \,d\sigma$$$$-\int_{B_R} |\nabla u(x)|^{p-2}\nabla u(x)\cdot\nabla[x_i\partial_i u(x)]dx.$$
Write the last integral as $A_{i}+B_{i},$ where $$A_{i}:=\int_{B_R} |\nabla u(x)|^{p-2}|\partial_i u(x)|^2 dx,$$ $$B_{i}:=\frac{1}{p}\int_{B_R}\partial_i (|\nabla u(x)|^{p})x_idx.$$  An integration by parts in $B_{i}$ yields $$B_{i}= \frac{1}{p}\int_{\partial B_R}|\nabla u(\sigma)|^{p} \sigma_i n_i d\sigma-\frac{1}{p}\int_{B_R} |\nabla u(x)|^{p}dx.$$ On the other hand we have also $$\int_{B_R} f(u(x))x_i\partial_i u(x)dx$$ $$=-\int_{B_R} F(u(x))dx+\int_{\partial B_R} F(u(x))\sigma_i n_i d\sigma.$$ Summing up on $i$ we have  $$(*) \qquad N\int_{B_R}  F(u(x))dx+\Big(1-\frac{N}{p}\Big)\int_{B_R} |\nabla u(x)|^{p}dx=\int_{\partial B_R} |\nabla u(\sigma)|^{p-2}\nabla u\cdot \sigma\,\nabla u\cdot n  \,d\sigma$$ $$-\frac{1}{p}\int_{\partial B_R} |\nabla u(\sigma)|^{p}\sigma\cdot n  \,d\sigma+\int_{\partial B_R} F(u(x))\sigma \cdot n \, d\sigma.$$ The right hand side is bounded by $$M(R)=\Big(1+
\frac{1}{p}\Big)R \int_{\partial B_R} |\nabla u(\sigma)|^{p}  \,d\sigma + R\int_{\partial B_R} |F(u(x))|d\sigma.$$ Similarly as in Lemma 2.3 from \cite{Mercuri}, since $F(u), |\nabla u|^p \in L^1(\R^N)$ there exists a sequence $R_k \rightarrow \infty$ such that $M(R_k)\rightarrow 0.$ By using the monotone convergence theorem in $(*)$ we obtain the conclusion in the case $p\leq 2.$ \newline For $p> 2$ a regularisation argument similar to \cite{DiBenedetto}*{p. 833} (see also \cite{Guedda2, MercuriB, FaMeWi})
allows to work with a $C_{loc}^{1,\alpha}$-approximation $u_\varepsilon \in C^2$ which classically solves \begin{align*}
-\textrm{div}\left(\left(\varepsilon+ |\nabla u_\varepsilon|^2\right)^{\frac{p-2}{2}}\nabla u_\varepsilon\right)= f(u) \quad \quad &\textrm{in}\,\, B_{2R}, \\
u_\varepsilon = u \quad \quad  & \textrm{on} \,\,  \partial B_{2R}.
\end{align*}

The proof can be then carried out with obvious modifications of the proof given in the case $p\leq 2,$  performing the $\varepsilon$-limit before letting $R \rightarrow +\infty$ along a suitable sequence $(R_k)_{k\in \mathbb N},$ and this concludes the proof.
\end{proof}

\subsection[Variational characterisation of the solutions]{Existence and variational characterisation of the groundstates}\label{3.1}

To prove the existence of groundstates, we first observe that the method of Berestycki-Lions \cite{Berestycki-Lions} although focused on the case $p=2$ is applicable in the present quasilinear context, we sketch the proof referring to \cite{Berestycki-Lions} for the details. In fact, observe that 
$f_{\varepsilon}(s)=|s|^{q-2}s-|s|^{l-2}s -\varepsilon |s|^{p-2}s$ satisfies
\setlist{nolistsep}
\begin{enumerate}
\item[$ (f_{1}) $]
$ -\infty < \displaystyle\liminf_{s \to 0^{+}} \frac{f_{\varepsilon}(s)}{s^{p-1}} \leqq \displaystyle\limsup_{s \to 0^{+}}\frac{f_{\varepsilon}(s)}{s^{p-1}}= -\varepsilon < 0 $.
\item[$ (f_{2}) $]
$ -\infty \leqq \displaystyle\limsup_{s\to +\infty} \frac{f_{\varepsilon}(s)}{s^{p^{*}-1}} \leqq 0 $, \qquad  where \quad    $ p^{*}= \frac{pN}{N-p} $.
\item[$ (f_{3}) $]
There exists $\varepsilon_*>0$ such that for all $\varepsilon\in(0,\varepsilon_*)$ the following property holds: there exists $ \zeta > 0 $ such that $ F_{\varepsilon}(\zeta)= \int_{0}^{\zeta} f_{\varepsilon}(s) ds > 0 $.
\end{enumerate}
To prove the existence of an optimiser, one carries on with the constrained minimisation argument as in \cite{Berestycki-Lions}, based on the truncation of the nonlinearity $f_\varepsilon,$ which allows to use $W^{1,p}(\R^N)$ for the functional setting. For all $\varepsilon\in(0,\varepsilon_*)$ in the present context $p\neq 2$ a suitable truncated function $ \tilde{f_{\varepsilon}}:\mathbb{R} \rightarrow \mathbb{R} $ is provided by:

\begin{equation}\label{p21}
\tilde{f_{\varepsilon}}(u)=\begin{cases}
0, &u<0, \\
u^{q-1}-u^{l-1} -\varepsilon u^{p-1}, & u \in [0,1], \qquad \tilde F_{\varepsilon}(u):=\int\limits_{0}^{u} \tilde f_{\varepsilon}(s)ds, \\
-\varepsilon,& u>1. \\
\end{cases}
\end{equation}
Replacing in \eqref{1s} the nonlinearity with the above bounded truncation $\tilde{f}_{\varepsilon}(u)$ makes the minimisation problem 
\begin{equation}\tag{$S_{\varepsilon}$}\label{e2.4}
S_{\varepsilon}=\inf\Big\{ \int\limits_{\mathbb{R}^{N}}|\nabla w|^{p} dx;\quad  w\in W^{1,p}(\mathbb{R}^{N}), \quad  p^{*}\int\limits_{\mathbb{R}^{N}} \tilde F_{\varepsilon}(w) dx= 1\Big\}
\end{equation}
well-posed in $W^{1,p}(\mathbb{R}^{N})$ even for supercritical $l > p^{*}$. Standard compactness arguments using radially symmetric rearrangements of minimising sequences allows to obtain a radially decreasing optimiser  $w_\eps,$ see also Appendix \ref{A.3}.
If $ w_{\varepsilon} $ is an optimiser for \eqref{e2.4} then a Lagrange multiplier  $ \theta_{\varepsilon} $ exists such that
\begin{equation}\label{e3.7tilde}
-\Delta_{p} w_{\varepsilon} = \theta_{\varepsilon}  \tilde f_{\varepsilon}(w_{\varepsilon}) \quad  \text{in} \,  \mathbb{R}^{N}.
\end{equation}
Note that by construction $ \tilde{f_{\varepsilon}} (u)\in L^{\infty} (\mathbb{R}^{N})$ and then by a classical result of DiBenedetto, see e.g. Corollary p. 830 in \cite{DiBenedetto}, any solution $u\in W^{1,p}(\mathbb{R}^{N})$ to the truncated problem with $\tilde{f_{\varepsilon}}$ is regular, i.e. $u\in C_{loc}^{1,\alpha}(\mathbb{R}^{N})$. 
Then  the maximum principle implies that any solution for the truncated problem is strictly positive and solves the problem 
\begin{equation}\label{e3.7}
-\Delta_{p} w_{\varepsilon} = \theta_{\varepsilon}  f_{\varepsilon}(w_{\varepsilon}) \quad  \text{in} \,  \mathbb{R}^{N},
\end{equation}
involving the original nonlinearity. The exponential decay estimate \eqref{exp-dec} on $w_\eps$ follows by Gazzola-Serrin $($\cite{Gazzola}*{Theorem $8$}$).$ 
\noindent As a consequence of the regularity and summability, $w_{\varepsilon}$ satisfies both Nehari's identity
\begin{equation}\label{p24}
\int\limits_{\mathbb{R^{N}}}|\nabla w_{\varepsilon}|^{p} dx= \theta_{\varepsilon} \int\limits_{\mathbb{R^{N}}}f_{\varepsilon}(w_{\varepsilon}) w_{\varepsilon} dx,
\end{equation}
and Poho\v{z}aev's identity \eqref{e3.17} 
\begin{equation}\label{p25}
\int\limits_{\mathbb{R^{N}}}|\nabla w_{\varepsilon}|^{p} dx= \theta_{\varepsilon} p^{*} \int\limits_{\mathbb{R^{N}}}F_{\varepsilon}(w_{\varepsilon})dx.
\end{equation}
The latter immediately implies that
\begin{equation}\label{p26}
\theta_{\varepsilon}=S_{\varepsilon}.
\end{equation}
Then a direct calculation involving \eqref{p26} shows that the rescaled function
\begin{equation}\label{p27}
u_{\varepsilon}(x) := w_{\varepsilon}(x/\sqrt[p]{S_{\varepsilon}})
\end{equation}
is the radial groundstate of \eqref{1s}, described in Theorem \ref{th3.2.9} below.

One more consequence of Poho\v{z}aev's identity \eqref{p25} is an expression for the total energy of the solution
$$\mathcal{E_{\varepsilon}}(u_{\varepsilon})=\Big(\frac{1}{p}-\frac{1}{p^{*}}\Big) S_{\varepsilon}^{N/p},$$
(see \cite{Berestycki-Lions}* {Corollary $2$}), which shows that $u_{\varepsilon}$ is indeed a groundstate, i.e. a nontrivial solution with the least energy.  Another simple consequence of \eqref{p25} is that \eqref{1s} has no nontrivial finite energy solutions for $\varepsilon\geq \varepsilon_{*}$. The threshold value $\eps_\ast$ is simply the smallest value of $\eps > 0$ for which the energy $\mathcal{E}_\eps$ is non-negative and can be computed explicitly.

To summarize, in the spirit of \cite{Berestycki-Lions}*{Theorem $2$} we have the following

\begin{theorem}\label{th3.2.9}
Let $N\ge 2$, $1<p<N$ and $p<q<l$. Then there exists $\eps_*=\eps_*(p,q,l)>0$ such that
for all $\eps\in(0,\eps_*)$, the minimization problem \eqref{e2.4} has a minimizer $w_\eps\in  W^{1,p}(\mathbb{R}^{N}) \cap L^{l}(\mathbb{R}^{N})\cap C^{1,\alpha}_{loc}(\mathbb{R}^{N})$. 
The minimizer $w_\eps$ satisfies
\begin{equation}\label{e3.7-0}
-\Delta_{p} w_{\varepsilon} = S_{\varepsilon}  f_{\varepsilon}(w_{\varepsilon}) \quad  \text{in} \,  \mathbb{R}^{N}.
\end{equation}
Moreover, $w_\eps(x)$ is a positive monotone decreasing function of $|x|$ and \begin{equation}\label{exp-dec}
 w_{\varepsilon}(|x|) \leq C e^{-\delta|x|}, \qquad  x \in \mathbb{R}^{N},
 \end{equation}
 for some $ C, \delta > 0 $. 
The rescaled function
$$u_{\varepsilon}(x) := w_{\varepsilon}(x/\sqrt[p]{S_{\varepsilon}})$$ 
is a groundstate solution to \eqref{1s}.
\end{theorem}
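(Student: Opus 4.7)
The plan is to implement the Berestycki-Lions constrained-minimisation scheme in the quasilinear setting, following the blueprint already sketched in Section~\ref{3.1}. First I would verify that the truncated nonlinearity $\tilde f_\eps$ of \eqref{p21} satisfies the analogues of $(f_1)$-$(f_3)$; the threshold $\eps_*$ is the largest value for which $\tilde F_\eps(\zeta)>0$ at some $\zeta\in(0,1)$, which exists because $q<l$ and can be computed explicitly by maximising $\frac{u^q}{q}-\frac{u^l}{l}-\eps\frac{u^p}{p}$ on $[0,1]$. For all $\eps\in(0,\eps_*)$ the constraint set in \eqref{e2.4} is therefore non-empty.

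Next I would show that \eqref{e2.4} admits a minimiser. Since $\tilde f_\eps$ is bounded with $\tilde F_\eps(u)\le C|u|^{p^*}$, the Sobolev embedding $W^{1,p}(\R^N)\hookrightarrow L^{p^*}(\R^N)$ yields $S_\eps>0$. Given a minimising sequence $(w_n)$, Schwarz symmetrisation produces a radially decreasing sequence with the constraint value unchanged (equimeasurability) and the gradient norm not increased (the nonlinear Pólya-Szegő inequality for $W^{1,p}$). After a slight dilation I may assume the constraint holds and $\|\nabla w_n\|_p\to S_\eps^{1/p}$. The sequence is bounded in $W^{1,p}$, so up to a subsequence converges weakly to a radial limit $w_\eps$; Strauss's radial compactness gives strong convergence in $L^s(\R^N)$ for all $s\in(p,p^*)$. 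A Brezis-Lieb/dominated-convergence argument, using that $\tilde F_\eps$ grows at worst like $u^{p^*}$ on $[0,1]$ and linearly on $(1,\infty)$, passes the constraint to the limit, and lower semicontinuity of the Dirichlet energy then forces $w_\eps$ to be a minimiser.

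A Lagrange multiplier produces $-\Delta_p w_\eps=\theta_\eps \tilde f_\eps(w_\eps)$. Since $\tilde f_\eps\in L^\infty$, DiBenedetto's regularity gives $w_\eps\in C^{1,\alpha}_{loc}$. To discard the truncation I would show $0\le w_\eps\le 1$: positivity follows from Vázquez's strong maximum principle, while the upper bound comes from an obstacle argument. Indeed, the cutoff $\bar w_\eps:=\min(w_\eps,1)$ has no larger gradient norm, and since $\tilde f_\eps\equiv -\eps<0$ on $(1,\infty)$ makes $\tilde F_\eps$ strictly decreasing there, one has $\int\tilde F_\eps(\bar w_\eps)\ge \int\tilde F_\eps(w_\eps)$, with strict inequality if $|\{w_\eps>1\}|>0$; a suitable dilation of $\bar w_\eps$ would then beat $S_\eps$, a contradiction. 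Hence $\tilde f_\eps(w_\eps)=f_\eps(w_\eps)$ and $w_\eps$ solves the original equation. Then Proposition~\ref{2.2.12} delivers Pohozaev's identity, and multiplying the equation by $w_\eps$ delivers Nehari's identity; together they force $\theta_\eps=S_\eps$. Exponential decay follows from Gazzola-Serrin \cite{Gazzola}, and the rescaling $u_\eps(x)=w_\eps(x/\sqrt[p]{S_\eps})$ converts the multiplier equation into $(P_\eps)$; the energy identity $\mathcal{E}_\eps(u_\eps)=(1/p-1/p^*)S_\eps^{N/p}$ confirms that $u_\eps$ is a groundstate.

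The principal obstacle relative to the case $p=2$ is the unavailability of Hilbert-space tools in several steps: one must invoke the nonlinear Pólya-Szegő inequality for $W^{1,p}$, DiBenedetto's nonlinear regularity, and Vázquez's strong maximum principle in place of their linear analogues. The pass-to-the-limit for the constraint integral also requires care because $l$ may be supercritical, but this is precisely why the bounded truncation $\tilde f_\eps$ is introduced; the obstacle step that returns to the untruncated equation, together with the identification $\theta_\eps=S_\eps$ via Pohozaev, closes the argument.
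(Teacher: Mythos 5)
Your plan is essentially the paper's own argument: the Berestycki--Lions constrained minimisation for \eqref{e2.4} with the truncation \eqref{p21}, symmetrisation and radial compactness for existence, DiBenedetto regularity, removal of the truncation, identification $\theta_\eps=S_\eps$ via Poho\v{z}aev (Proposition \ref{2.2.12}), Gazzola--Serrin for the exponential decay, and the dilation $u_\eps(x)=w_\eps(x/\sqrt[p]{S_\eps})$ together with the energy formula $\mathcal E_\eps(u_\eps)=(\frac1p-\frac1{p^*})S_\eps^{N/p}$ to conclude that $u_\eps$ is a groundstate. The one genuinely different ingredient is your treatment of the bound $w_\eps\le 1$: you argue variationally, replacing $w_\eps$ by $\min(w_\eps,1)$, using that $\tilde F_\eps$ is strictly decreasing on $(1,\infty)$ and a dilation to contradict minimality, whereas the paper applies the maximum principle on the set $\{w_\eps>1\}$, where $-\Delta_p w_\eps=-\theta_\eps\eps$. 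Your route has the advantage of not requiring the sign of the Lagrange multiplier in advance (the paper's maximum-principle step implicitly uses $\theta_\eps>0$, which it obtains from Poho\v{z}aev), while the paper's route is shorter once $\theta_\eps=S_\eps$ is known; either closes the argument. A small remark: Poho\v{z}aev alone gives $\theta_\eps=S_\eps$, since the constraint turns \eqref{e3.17} into $\int|\nabla w_\eps|^p=\theta_\eps$; Nehari's identity is not needed for that identification.

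One step is stated too optimistically: the claim that a Brezis--Lieb/dominated-convergence argument ``passes the constraint to the limit''. Near $u=0$ one has $\tilde F_\eps(u)=\frac{u^q}{q}-\frac{u^l}{l}-\frac{\eps}{p}u^p$, and the term $\frac{\eps}{p}\|w_n\|_p^p$ cannot be passed to the limit because the radial embedding of $W^{1,p}(\R^N)$ into $L^p(\R^N)$ is not compact; mass escaping to infinity is invisible in $L^s$, $p<s<p^*$. The correct statement is one-sided: splitting $\tilde F_\eps$ into a part controlled by Strauss compactness (Lemma \ref{1.14}) and a nonnegative part handled by Fatou, the weak radial limit satisfies $p^*\int\tilde F_\eps(w_\eps)\,dx\ge 1$, possibly with strict inequality. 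The dilation you invoke is exactly the fix, but it belongs at this stage: rescale the \emph{limit} $w_\eps(\cdot/\sigma)$ with $\sigma\le1$ to restore the equality constraint, which does not increase (indeed multiplies by $\sigma^{N-p}\le1$) the Dirichlet norm, so the rescaled limit is a minimiser. With that adjustment your proposal is complete at the same level of detail as the paper's sketch.
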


In view of \eqref{p21} and since we are interested only in positive solutions
of $(P_\eps)$, in what follows we always assume that the nonlinearity $f_\eps(u)$ in $(P_\eps)$ is replaced
by its bounded truncation $\tilde{f_\eps}(u)$ from \eqref{p21}, without mentioning this explicitely.

\begin{remark}
Equivalently to \eqref{e2.4}, we can consider minimising the quotient
\begin{equation*}
\mathcal{S}_{\varepsilon}(w):= \frac{||\nabla w||_{p}^{p} }{\Big(p^{*}\int\limits_{\mathbb{R}^{N}} F_\eps(w)dx\Big)^{(N-p)/N} }, \quad w\in \mathcal{M_{\varepsilon}},
\end{equation*}
where
$$  \mathcal{M_{\varepsilon}}:= \Big\{0\leq w \in W^{1,p}(\mathbb{R}^{N}),\; \int\limits_{\mathbb{R}^{N}} F_\eps(w)dx>0\Big\}.$$
Setting $w_{\lambda} (x) := w(\lambda x),$ it is easy to check that $\mathcal{S}_{\varepsilon}(w_{\lambda})=\mathcal{S}_{\varepsilon}(w)$ for all  $\lambda > 0.$ Therefore it holds that

\begin{equation}\label{p29}
S_{\varepsilon}=\displaystyle\inf_{ w\in  \mathcal{M_{\varepsilon}}}\mathcal{S}_{\varepsilon}(w).
\end{equation}
Moreover the inclusion $ \mathcal{M}_{\varepsilon_{2}}\subset \mathcal{M}_{\varepsilon_{1}}$ for $ \varepsilon_{2} > \varepsilon_{1} > 0$, \eqref{p29} implies that $S_{\varepsilon}$ is a nondecreasing function of $\varepsilon\in  (0, \varepsilon_{*})$.
\end{remark}

\section{Limiting PDEs}\label{PDEsection}
\subsection{Critical Emden-Fowler Equation}

In this section, we recall some old and new results for the critical Emden-Fowler equation
\begin{equation}\tag{$R_{*}$}\label{e2.22a}
- \Delta_{p} u = u^{p^{*}-1},\quad u\in D^{1,p}(\mathbb{R}^{N}), \quad u> 0,
\end{equation}
where $ 1<p<N $, $ p^{*}=pN/(N-p) $  is the critical exponent for the Sobolev embedding.
We observe that any nontrivial non-negative solution to \eqref{e2.22a} is necessarily positive as a consequence of strong maximum principle (see \cite{Vazquez}). Solutions of \eqref{e2.22a} are critical points of the functional
\begin{equation}
\mathcal{J}(u):= \frac{1}{p} \int_{\mathbb{R}^{N}} |\nabla u|^{p} dx -\frac{1}{p^{*}} \int_{\mathbb{R}^{N}}|u|^{p^{*}} dx.
\end{equation}
By the Sobolev embedding $D^{1,p}(\mathbb{R}^{N})\subset L^{p^{*}}(\mathbb{R}^{N}),$ $\mathcal{J}$ is defined in $D^{1,p}(\R^N).$ Since by \cite{Lion} all the minimising sequences for
\begin{equation}\tag{$S_{*}$}\label{2p}
S_{*}:=\inf\Big\{\int\limits_{\mathbb{R}^{N}}|\nabla w|^{p} dx ; \quad w\in D^{1,p}(\mathbb{R}^{N}),\quad \int\limits_{\mathbb{R}^{N}} |w|^{p^{*}} dx =1\Big \},
\end{equation}
are relatively compact modulo translations and dilations, critical points for $\mathcal{J}$ are provided by direct minimisation, after suitable rescaling of positive solutions $W$ to the Euler--Lagrange equation for $S_*$
\begin{equation}\label{e2.25a}
-\Delta_{p} W =\theta W^{p^{*}-1} \quad \textrm{in} \; \mathbb{R}^{N}.
\end{equation}
Here since
$$\int\limits_{\mathbb{R}^{N}}|\nabla W|^{p}dx =\theta \int\limits_{\mathbb{R}^{N}}|W|^{p^{*}} dx=\theta,$$
it follows that $S_{*}=\theta .$ 
Positive finite energy solutions to this equation are classified after the works of Guedda-Veron \cite{Guedda} and of Sciunzi \cite{Sciunzi} and V\'etois \cite{Vet} mentioned in the introduction, which we recall in the following 
\begin{theorem} \label{2.3.2} 
Let $ 1<p<N $. Then every radial solution $U$ to \eqref{e2.22a} is represented as
\begin{equation}\label{eq2.27}
U(|x|)=U_{\lambda,0}(|x|):= \left[\frac{\lambda^{p'/p}k^{1/p'}N^{1/p}}{\lambda^{p'}+|x|^{p'}}\right]^{k/p'},
\end{equation}
for some $ \lambda >0 $, where $ p':=\frac{p}{p-1} $ and $ k:=\frac{N-p}{p-1} ,$ \cite{Guedda}. \newline
In fact, every solution $U$ to \eqref{e2.22a} is radially symmetric about some points $y\in\R^N$
and therefore it holds that
\begin{equation}\label{eq2.28}
U(x)=U_{\lambda,y}(x):= \left[\frac{\lambda^{p'/p}k^{1/p'}N^{1/p}}{\lambda^{p'}+|x-y|^{p'}}\right]^{k/p'},
\end{equation}
for some $ \lambda >0 $ and $ y\in \mathbb{R}^{N}, $ \cite{Sciunzi,Vet}.
\end{theorem}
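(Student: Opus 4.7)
The plan is to combine two separate statements in the literature: the radial classification of Guedda--Véron \cite{Guedda}, and the radial symmetry about some point obtained via moving planes by Sciunzi \cite{Sciunzi} for $p>2$ and by Vétois \cite{Vet} for $1<p<2$.

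For the first part, concerning radial solutions, the plan is to exploit the scaling invariance of the critical equation. In radial coordinates \eqref{e2.22a} becomes the ODE
\begin{equation*}
\bigl(r^{N-1}|u'|^{p-2}u'\bigr)' + r^{N-1} u^{p^{*}-1} = 0,
\end{equation*}
and the Emden--Fowler change of variables $u(r)=r^{-\kappa}w(t)$, $t=\log r$, with $\kappa=(N-p)/(p-1)$, removes the explicit $r$-dependence and produces an autonomous first-order system in $(w,w')$. A phase-plane analysis locates the equilibria, and the finite-energy assumption $u\in D^{1,p}(\R^N)$ together with the positivity of $u$ selects a unique heteroclinic orbit. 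This orbit admits a first integral that can be integrated explicitly, producing the one-parameter family \eqref{eq2.27}; the parameter $\lambda>0$ reflects precisely the scaling invariance $u(x)\mapsto \lambda^{-(N-p)/p}u(x/\lambda)$ of \eqref{e2.22a}.

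For the second part, the plan is to reduce the general classification to the radial case by showing that every positive $u\in D^{1,p}(\R^N)$ solving \eqref{e2.22a} is radially symmetric about some point $y\in\R^N$. First one establishes sharp asymptotic decay of $u$ and $|\nabla u|$ at infinity using $C^{1,\alpha}_{loc}$ regularity (DiBenedetto--Tolksdorf) and the $L^{p^{*}}$ integrability, which permits a Kelvin-type transform $\tilde u(x)=|x|^{-(N-p)}u(x/|x|^{2})$ and brings the problem into a form amenable to the moving-plane technique. One then runs the moving planes with respect to every direction: the reflected function satisfies a differential inequality for the $p$-Laplacian, and comparison with $u$ gives first the ability to start the procedure from infinity and then the identification of the critical hyperplane through a Serrin-type argument. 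The output is symmetry with respect to every hyperplane through some distinguished point $y$, hence radiality. Combined with Part~1, this forces $u=U_{\lambda,y}$.

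The main obstacle is the moving-plane step for $p\neq 2$: the operator $-\Delta_p$ is degenerate (for $p>2$) or singular (for $p<2$) on the critical set $Z=\{\nabla u=0\}$, so the Hopf boundary lemma and strong comparison principle are not directly available there. The arguments of Sciunzi \cite{Sciunzi} and Vétois \cite{Vet} handle this by proving that $Z$ is small in a quantitative sense and by using weighted Sobolev inequalities involving the degeneracy weight $|\nabla u|^{p-2}$, thereby recovering enough comparison to conclude. All other steps (ODE analysis, Kelvin transform, asymptotic decay) are fairly standard once the moving-plane obstruction is bypassed by invoking these two works directly.
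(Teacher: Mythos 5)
Your proposal coincides with the paper's treatment: the paper offers no independent proof of Theorem \ref{2.3.2}, but simply records the radial classification of Guedda--V\'eron \cite{Guedda} together with the symmetry results of Sciunzi \cite{Sciunzi} (for $p>2$) and V\'etois \cite{Vet} (for $p<2$), which is exactly your strategy of combining these three references. One minor caveat in your sketch of the cited works (harmless, since you invoke them as black boxes): for $p\neq 2$ there is no Kelvin transform preserving the $p$-Laplace structure, so Sciunzi and V\'etois instead establish sharp asymptotic estimates for $u$ and $\nabla u$ at infinity directly and then run the moving-plane argument without inversion.
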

In the case $ p=2 $ and $ N\geq 3 $ this result is classical, see \cite{Caffarelli}.
Hence, the radial groundstate of \eqref{e2.22a} is given by rescaling the function
\begin{equation}\label{232}
U_{1,0}(x):= \left[\frac{k^{1/p'}N^{1/p}}{1+|x|^{p'}}\right]^{k/p'},
\end{equation}
and moreover it holds that
\begin{equation}\label{e-normU}
||\nabla U_{\lambda,0}||^{p}_{p}=||U_{\lambda,0}||^{p^{*}}_{p^{*}}= S^{N/{p}}_{*},
\end{equation}
see e.g. \cite{Talenti}.
In conclusion all the positive minimizers for \eqref{2p} are translations of the radial family
\begin{equation}\label{234}
W_{\lambda}(x) := U_{\lambda,0}(\sqrt[p]{S_{*}}x).
\end{equation}

\subsection{Supercritical zero mass equation}\label{20004}
This section is devoted to the supercritical equation
\begin{equation}\tag{$P_{0}$}\label{32}
-\Delta_p u-|u|^{q-2}u+|u|^{l-2}u=0\quad\text{in } \;\mathbb{R}^N,
\end{equation}
where $1<p<N$ and $p^*<q<l$.

\begin{remark}\label{320}
Note that by Poho\v{z}aev's  identity \eqref{e3.17}, equation \eqref{32} has no solution in $D^{1,p}(\R^N)\cap C^{1,\alpha}_{loc}(\R^N)$ $q\leq p^{*}$.
\end{remark}
We prove the following existence result in the spirit of Merle-Peletier \cite{Merle} to the case $p\neq2$.

\begin{theorem}\label{11224411}
Let $N\ge 2$, $1<p<N$ and $p^*<q<l$.
Equation
\eqref{32} admits a groundstate solution $ u_{0} \in D^{1,p}(\mathbb{R}^{N})\cap L^{l}(\mathbb{R}^{N})\cap C^{1,\alpha}_{loc}(\mathbb{R}^{N}),$
such that $u_0(x)$ is a positive monotone decreasing function of $|x|$ and
\begin{equation}\label{e-decay-supercrit}
u_{0}(x)\sim |x|^{-\frac{N-p}{p-1}} \quad \text{as}\quad |x| \rightarrow \infty.
\end{equation}

\end{theorem}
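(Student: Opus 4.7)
The plan is to adapt the Berestycki--Lions constrained minimization strategy developed in Section~\ref{3.1} to the supercritical zero-mass setting, working in the homogeneous space $D^{1,p}(\mathbb{R}^N)$ rather than in $W^{1,p}(\mathbb{R}^N)$, and following in spirit the approach of \cite{Merle} for $p=2$. Since $q>p^{*}$, the embedding $D^{1,p}(\mathbb{R}^N)\hookrightarrow L^q(\mathbb{R}^N)$ fails, so that the functional $w\mapsto\int F(w)\,dx$ with $F(u):=\tfrac1q|u|^q-\tfrac1l|u|^l$ is not a priori well-defined on $D^{1,p}$. I would remedy this exactly as in the proof of Theorem~\ref{th3.2.9}, by replacing $f(u)=|u|^{q-2}u-|u|^{l-2}u$ with a bounded truncation $\tilde f$ that agrees with $f$ on $[0,M]$ for some $M$ larger than the first positive zero of $F$, and would consider the constrained minimization
$$
S_0:=\inf\Big\{\textstyle\int_{\mathbb{R}^N}|\nabla w|^p\,dx \ :\ w\in D^{1,p}(\mathbb{R}^N),\ p^{*}\int_{\mathbb{R}^N}\tilde F(w)\,dx=1\Big\},
$$
which is well-posed because $\tilde F$ is bounded and vanishes outside the set $\{|w|\le M\}$, so that $\tilde F(w)$ is controlled pointwise by $|w|^{p^{*}}$.

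The first main step is to show that $S_0$ is attained. By Schwarz symmetrization and the P\'olya--Szeg\H{o} inequality (which is available in $D^{1,p}(\mathbb{R}^N)$) one may assume that a minimizing sequence $(w_n)$ is radially symmetric and decreasing. The radial Strauss-type inequality $|w_n(r)|\lesssim r^{-(N-p)/p}\|\nabla w_n\|_p$ then provides uniform pointwise decay, and since $q>p^{*}$ one obtains the tightness estimate
$$
\int_{|x|\ge R}|w_n|^q\,dx\ \lesssim\ R^{-(N-p)(q-p^{*})/p}\int_{\mathbb{R}^N}|w_n|^{p^{*}}\,dx\ \longrightarrow\ 0
$$
as $R\to\infty$, uniformly in $n$, and correspondingly $\int_{|x|\ge R}\tilde F(w_n)\,dx\to 0$. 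Local weak convergence in $D^{1,p}$ combined with the boundedness of $\tilde f$ and Vitali's theorem allow one to pass to the limit in the constraint and yield a radial minimizer $w_0$. A Lagrange multiplier provides $\theta_0>0$ with $-\Delta_p w_0=\theta_0\tilde f(w_0)$; DiBenedetto regularity gives $w_0\in C^{1,\alpha}_{loc}$, the strong maximum principle ensures $w_0>0$, and an a priori $L^\infty$ bound obtained from the equation together with the choice of $M$ forces $\|w_0\|_\infty\le M$, so that $\tilde f(w_0)=f(w_0)$. Pohožaev's identity (Proposition~\ref{2.2.12}) combined with Nehari's identity identifies $\theta_0=S_0$, and the rescaling $u_0(x):=w_0(x/\sqrt[p]{S_0})$ yields the desired groundstate of \eqref{32}, with the groundstate property following from the Pohožaev energy formula as in Section~\ref{3.1}.

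The main obstacle is the decay estimate $u_0(x)\sim|x|^{-(N-p)/(p-1)}$. Since $u_0$ is radially decreasing with $u_0(|x|)\to 0$, for large $|x|$ the higher-order term $u_0^{l-1}$ is negligible compared to $u_0^{q-1}$, so that $-\Delta_p u_0\simeq u_0^{q-1}\ge 0$ in an exterior region and $u_0$ is $p$-superharmonic there. Because $q>p^{*}$, a direct computation shows that $|x|^{-(N-p)(q-1)/(p-1)}$ is integrable at infinity, which renders the right-hand side of the equation integrable against the fundamental solution of $\Delta_p$ and permits the construction of sub- and super-solutions of the form $c_{\pm}|x|^{-(N-p)/(p-1)}$ on exterior domains. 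Sandwiching $u_0$ between these via the weak comparison principle for the $p$-Laplacian, in the spirit of \cite{Tang,Gazzola-Serrin-Tang}, then yields the asymptotic $u_0(x)\sim|x|^{-(N-p)/(p-1)}$. This comparison step, elementary for $p=2$ via the Kelvin transform, requires careful handling of the degenerate/singular nature of $\Delta_p$ and constitutes the principal technical hurdle for $p\neq 2$.
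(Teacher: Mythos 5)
Your existence argument coincides with the paper's: the same truncated constrained minimization in $D^{1,p}(\mathbb{R}^N)$, symmetrization, DiBenedetto regularity, maximum principle to remove the truncation, Nehari/Poho\v{z}aev identities to identify the multiplier, and rescaling, exactly as in the proof sketched after \eqref{991} (compare Theorem \ref{th3.2.9}). That part is fine.

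The genuine gap is in the decay estimate \eqref{e-decay-supercrit}, specifically the upper bound. First, functions $c_{\pm}|x|^{-\frac{N-p}{p-1}}$ are exactly $p$-harmonic, so they are \emph{not} supersolutions of $-\Delta_p u_0=u_0^{q-1}-u_0^{l-1}\ge 0$ in an exterior region; you must build a genuine supersolution dominating the right-hand side, and your integrability claim is circular: you invoke integrability of $|x|^{-\frac{(N-p)(q-1)}{p-1}}$, which presupposes the decay you are trying to prove, whereas the only a priori bound available is Ni's estimate $u_0\lesssim |x|^{-\frac{N-p}{p}}$ (Lemma \ref{4.5.}), under which $u_0^{q-1}$ is integrable at infinity only when $q>p^*+1$; for $p^*<q\le p^*+1$ your source-term route needs a bootstrap you have not supplied, and because $\Delta_p$ is nonlinear you cannot simply superpose a particular solution onto the $p$-harmonic profile. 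Second, the route that works for all $q>p^*$ — writing $-\Delta_p u_0-V u_0^{p-1}\le 0$ with $V=u_0^{q-p}\lesssim |x|^{-p-\delta}$, which is what the paper does in Lemma \ref{decaylemma} — runs into the real difficulty you gloss over: for such Hardy-type perturbations the exterior problem admits both fast-decaying ($\sim|x|^{-\frac{N-p}{p-1}}$) and slowly-decaying positive solutions, so ``the weak comparison principle'' does not place a subsolution below the fast-decaying one unless the subsolution is known to be small at infinity in an integral sense. The paper supplies exactly the two missing ingredients: the existence of a comparison solution with the precise decay for the perturbed equation (the Fraas--Pinchover result \cite{Pinchover}) and a comparison principle on unbounded domains (Theorem \ref{A52}, after \cite{Liskevich}) whose hypothesis, condition $(S)$, is verified for radial $D^{1,p}$ functions in Lemma \ref{A55}; note also that the usual trick of subtracting a constant to reduce to annuli does not preserve the subsolution property once the potential term is present. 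Without these (or an equivalent substitute), your sandwiching step for the upper bound does not go through; the lower bound, by contrast, can indeed be handled by comparison with a small multiple of the $p$-harmonic profile on annuli.
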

\noindent

\begin{remark} \label{Tangremark} The uniqueness result of \cite{Tang Moxun} is applicable to fast decay solutions to (\ref{32}). However the regularity hypothesis H1 as stated at p. 155 in \cite{Tang Moxun} would require $p^*\geq 2$, namely $p\geq \frac{2N}{N+2}.$ 
\end{remark}

\begin{proof}
Following Berestycki-Lions \cite{Berestycki-Lions} in the present zero-mass case context we solve the variational problem in $D^{1,p} (\mathbb{R}^{N})$ namely

\begin{equation}\tag{$S_{0}$}\label{991}
 S_{0}:= \inf \Big\{\int\limits_{\mathbb{R}^{N}}|\nabla w|^{p}dx\Big| w\in D^{1,p}(\mathbb{R}^{N}), \quad p^{*}\int\limits_{\mathbb{R}^{N}} \tilde F_{0}(w)dx=1 \Big\},
 \end{equation}
where
$$ \tilde F_{0}(w)= \int_{0}^{w} \tilde f_{0}(s) ds,$$
and $\tilde f_0(s)$ is a bounded truncation of the nonlinearity
$$f_0(s)=|s|^{q-2} s - |s|^{l-2} s,$$
e. g. 

\begin{equation}
\tilde{f_{0}}(u)=\begin{cases}
0, &u<0, \\
u^{q-1}-u^{l-1}, & u \in [0,1], \\
0,& u>1. \\
\end{cases}
\end{equation}
The above bounded truncation makes the minimisation problem well-posed in $D^{1,p}(\mathbb{R}^{N})$. Arguing as for the positive mass case the existence of a radially decreasing optimiser $u$ is standard. \newline
The global boundedness of the truncation allows to use the classical result of DiBenedetto, see e.g. Corollary p. 830 in \cite{DiBenedetto}, to show that $u\in C_{loc}^{1,\alpha}(\mathbb{R}^{N})$. \newline
Then  the maximum principle implies that any solution for the truncated problem solves in fact \eqref{32} and is strictly positive. \newline
Note that by Ni's inequality \ref{4.5.} and the $C_{loc}^{1,\alpha}(\mathbb{R}^{N})$ regularity it follows that $u\in L^{\infty}(\R^N).$ By interpolation with Sobolev's inequality this implies that $u\in L^{l}(\R^N)$ for all $l>p^*.$ \newline With the lemmas below on the asymptotic decay we conclude the proof.
\end{proof}
The following lemma about asymptotic properties of solutions is taken from \cite{Pinchover}.

\begin{lemma}[\cite{Pinchover}*{Corollary $8.3.$}]
Let $1<p<N $.
Assume that
$$ |V(x)|\leq \frac{g(|x|)}{1+|x|^{p}},$$
where $g:\mathbb{R^{+}\rightarrow \mathbb{R^{+}}} $ is bounded and continuous and satisfies the following conditions:
\begin{enumerate}
\item[$(C{1})$] $\Big|\int\limits_{1}^{\infty} \Big |t^{1-N} \int\limits_{1}^{t} \frac{g(|x|)}{|x|^{p}} |x|^{N-1} d|x| \Big|^{\frac{1}{p-1}}dt\Big| <\infty. $

\item[$(C{2})$] $\Big|\int\limits_{1}^{\infty} \frac{g(|x|)}{|x|}d|x|\Big|<\infty.$
\end{enumerate}
Assume that
\begin{equation}\label{02040}
- \Delta_{p}u +V(x)u^{p-1}=0, \qquad \text{in} \; \mathbb{R}^{N}\backslash B_{1}(0),
\end{equation}
admits a positive supersolution. Then \eqref{02040} admits a solution which satisfies
\begin{equation}\label{002040}
U_{0}(x)\sim  |x|^{-\frac{N-p}{p-1}} \qquad \text{as} \; |x|\rightarrow\infty.
\end{equation}
\end{lemma}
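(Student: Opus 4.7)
The plan is to split the statement into an existence step and an asymptotic step, reducing the latter to a radial ODE analysis in which conditions (C1) and (C2) play the role of integrability conditions in two successive integrations.

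\textbf{Existence via sub/supersolutions.} The explicit function $\psi(x) := |x|^{-(N-p)/(p-1)}$ is the (unique up to positive multiples) positive $p$-harmonic function on $\R^N\setminus\{0\}$ vanishing at infinity, i.e.\ $-\Delta_p\psi = 0$. From the pointwise bound $|V(x)|\leq g(|x|)/(1+|x|^p)$ and the boundedness of $g$, for every sufficiently small $c>0$ and $R_0$ sufficiently large the function $c\psi$ is a subsolution of $-\Delta_p u + Vu^{p-1}=0$ on $\{|x|\geq R_0\}$, because the perturbative term $V\cdot(c\psi)^{p-1}$ is dominated by the curvature contribution that $\psi$ has at infinity. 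Combined with the hypothesized positive supersolution (scaled so that it dominates $c\psi$ on the relevant inner boundary), a Perron-type monotone iteration for the $p$-Laplacian in an exterior domain produces a positive solution $U_0$ squeezed between positive multiples of $\psi$, giving at this stage only the weak two-sided bound $U_0\sim \psi$.

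\textbf{Sharp asymptotics via radial ODE.} To upgrade the two-sided bound to the claimed equivalence, I would work with radial sub/super-solutions, which dominate/are dominated by $U_0$ via the comparison principle. For a radial $u(r)$ the equation reads
$$-\bigl(r^{N-1}|u'|^{p-2}u'\bigr)'+r^{N-1}V(r)u^{p-1}=0.$$
Integrating once from $r$ to $\infty$ gives
$$r^{N-1}|u'(r)|^{p-2}u'(r)=\int_r^\infty s^{N-1}V(s)u^{p-1}(s)\,ds,$$
and when $u\sim\psi$ the right-hand side integral converges precisely under condition (C1). Inverting $|u'|^{p-2}u'\mapsto u'$ and integrating once more in $r$ expresses $u(r)$ as a constant $c_\infty$ plus a tail whose absolute convergence is guaranteed by condition (C2). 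A contraction-mapping argument for this integral operator on the weighted space $\{u:\ r^{(N-p)/(p-1)}(u(r)-c_\infty\psi(r))\to 0\}$ produces, for any prescribed $c_\infty>0$, a radial solution with the desired asymptotic $u(r)\sim c_\infty r^{-(N-p)/(p-1)}$; choosing $c_\infty$ to match the leading constant of $U_0$ pinches $U_0$ from above and below and yields the claimed equivalence.

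\textbf{Main obstacle.} The principal difficulty compared to the linear case $p=2$ is the quasilinearity of $-\Delta_p$: the second integration, which inverts $|u'|^{p-2}u'\mapsto u$, is nonlinear, so one cannot borrow the linear Green-function representation that makes the argument transparent for the ordinary Laplacian. The two conditions (C1) and (C2) are tailored exactly to the first and second integrations in the radial reduction — (C1) makes the potential term integrable after insertion of the leading profile, and (C2) controls the subsequent integration of the one-parameter family of power nonlinearities generated by $|\cdot|^{p-2}$. Controlling the contraction constant of the resulting fixed-point operator in the weighted space where the asymptotic profile is prescribed is the technical heart of the proof, and it is precisely where Pinchover's criticality framework for $-\Delta_p + V$ provides the correct functional setting.
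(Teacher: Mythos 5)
The paper does not prove this lemma at all: it is quoted verbatim from Fraas--Pinchover (\cite{Pinchover}, Corollary 8.3), whose proof runs through criticality theory for $-\Delta_p+V$ (the assumed positive supersolution yields a positive solution of minimal growth at infinity, and conditions $(C1)$--$(C2)$ make $V$ a small perturbation of the $p$-Laplacian, forcing that minimal-growth solution to inherit the $p$-harmonic asymptotics $|x|^{-\frac{N-p}{p-1}}$). Measured against that, your sketch has a genuine gap already in the existence step. The equation $-\Delta_p u+Vu^{p-1}=0$ is homogeneous of degree $p-1$, so multiplying $\psi(x)=|x|^{-\frac{N-p}{p-1}}$ by a small constant $c$ achieves nothing: $-\Delta_p(c\psi)+V(c\psi)^{p-1}=c^{p-1}V\psi^{p-1}$, and since $\psi$ is \emph{exactly} $p$-harmonic there is no ``curvature contribution at infinity'' available to absorb the potential term. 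Whenever $V>0$ on part of the exterior domain, $c\psi$ is simply not a subsolution, however small $c$ and however large $R_0$. Hence the claimed squeeze of $U_0$ between multiples of $\psi$ does not follow; one needs corrected barriers (for instance profiles of the form $r^{-\frac{N-p}{p-1}}\bigl(1\pm Cr^{-\delta}\bigr)$, or the integral-equation correction that $(C1)$--$(C2)$ are designed to control), and, more fundamentally, the positivity structure furnished by the hypothesized supersolution, which in your sketch is reduced to a mere upper barrier ``after scaling''.

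The second step has further problems. The comparison principle for $-\Delta_p+V$ with a sign-changing $V$ on an unbounded exterior domain is not automatic (this very paper has to impose condition $(S)$ and prove a dedicated comparison theorem to use it), so ``dominate/are dominated by $U_0$ via the comparison principle'' needs justification that your argument does not supply; the supersolution hypothesis is precisely what legitimizes such comparisons in Pinchover's framework. Moreover $V$ is not assumed radial, so the radial ODE can only be written for barriers built from $\pm g(r)/(1+r^p)$, not for the equation itself. In the first integration you drop the flux constant: for the desired profile $u\sim c_\infty r^{-\frac{N-p}{p-1}}$ one has $r^{N-1}|u'|^{p-2}u'\to -c_\infty^{p-1}\bigl(\tfrac{N-p}{p-1}\bigr)^{p-1}\neq 0$, and this constant is exactly the quantity carrying the leading coefficient, so the identity $r^{N-1}|u'|^{p-2}u'=\int_r^\infty s^{N-1}Vu^{p-1}\,ds$ is false for the solution you are constructing. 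Finally, the roles of the two conditions are interchanged: inserting $u\sim r^{-\frac{N-p}{p-1}}$ makes the integrand comparable to $g(s)/s$, i.e.\ it is $(C2)$ that controls that step, while $(C1)$ (the iterated integral with the $t^{1-N}$ weight) governs the complementary, bounded-profile integration. A barrier-plus-fixed-point strategy could in principle be carried out, but as written the key subsolution claim fails and the comparison steps are unjustified, which is exactly the machinery the paper avoids by citing Pinchover's result.
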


\begin{corollary}
If $$V(x)=\frac{c}{(1+|x|)^{p+\delta}},$$
and $c$ is sufficiently small then \eqref{02040} admits a positive solution that satisfies \eqref{002040}
\end{corollary}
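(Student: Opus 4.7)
The plan is to verify the hypotheses of the preceding Pinchover lemma and apply it directly. First, set $g(r):=c(1+r^p)/(1+r)^{p+\delta}$; then $g$ is bounded and continuous on $[0,\infty)$, one has $|V(x)|=g(|x|)/(1+|x|^p)$ with equality, and $g(r)\sim c\,r^{-\delta}$ as $r\to\infty$.

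Next, check conditions (C1) and (C2). Condition (C2) is immediate since $g(r)/r\sim c\,r^{-1-\delta}$ is integrable on $[1,\infty)$. For (C1), denote $I(t):=\int_1^t g(s)s^{N-1-p}\,ds$; a short case analysis is needed. If $N-p-\delta>0$, then $I(t)\sim t^{N-p-\delta}$ and the outer integrand becomes, up to a constant, $t^{(1-p-\delta)/(p-1)}=t^{-1-\delta/(p-1)}$, which is integrable because $\delta>0$. If $N-p-\delta<0$, then $I(t)$ is bounded and the outer integrand is $\sim t^{(1-N)/(p-1)}$, integrable precisely because $N>p$ makes this exponent strictly less than $-1$. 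The borderline case $N-p-\delta=0$ contributes only a harmless logarithmic factor against the same $t^{(1-N)/(p-1)}$ decay. In every regime (C1) follows.

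For the remaining hypothesis of the lemma, since $V\geq 0$ on $\mathbb{R}^N\setminus B_1(0)$ the constant function $u\equiv 1$ satisfies $-\Delta_p u+V u^{p-1}=V\geq 0$ weakly and hence is a positive supersolution of \eqref{02040}. The lemma then produces a solution $U_0$ with $U_0(x)\sim|x|^{-(N-p)/(p-1)}$; this asymptotic forces $U_0>0$ outside a sufficiently large ball, and positivity on all of $\mathbb{R}^N\setminus B_1(0)$ then follows from the strong maximum principle for the $p$-Laplace equation with nonnegative zeroth-order coefficient $V$.

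The subtle point, which I would expect to be the main obstacle, is that the smallness of $c$ plays no role in any of the above verifications. I therefore suspect that the smallness hypothesis is consumed inside the proof of Pinchover's lemma itself: presumably the solution in \eqref{002040} is built as a perturbation of the $p$-harmonic profile $|x|^{-(N-p)/(p-1)}$, and the associated contraction or sub/supersolution bracket requires the zeroth-order perturbation $V$ to be small enough relative to the structure constants of the $p$-Laplacian. If one wished to prove the corollary without black-boxing the lemma, tracing this perturbative dependence on $c$ would be the step requiring genuine work.
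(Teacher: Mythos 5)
Your verification is correct and follows essentially the paper's route: the paper's entire proof consists of taking $g(|x|)=|x|^{-\delta}$ and noting that $(C1)$, $(C2)$ are elementary to check, which is exactly the computation you carry out (with $c$ and the precise form of $V$ absorbed into $g$), together with the supersolution observation that the paper leaves implicit. One correction to your closing speculation: the smallness of $c$ is not consumed inside Pinchover's lemma, whose hypotheses $(C1)$--$(C2)$ are invariant under multiplying $g$ by a constant; in the paper it is used only to guarantee the pointwise bound $|V(x)|\le g(|x|)/(1+|x|^{p})$ for the fixed choice $g(|x|)=|x|^{-\delta}$ (for instance $c\le 1/2$ suffices when $|x|\ge 1$), so with your choice of $g$ the smallness hypothesis is indeed not needed for this nonnegative potential, and your argument proves the stated corollary a fortiori.
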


\begin{proof}
We can take
$$g(|x|)=|x|^{-\delta}.$$
Then $(C1)$, $(C2)$ are elementary to check.
\end{proof}
The decay estimate \eqref{e-decay-supercrit} is proved in the following lemma.
\begin{lemma}\label{decaylemma}
Let $ u_{0} \in D^{1,p}(\mathbb{R}^{N})\cap L^{l}(\mathbb{R}^{N})$ be a positive radial solution of \eqref{32}.  Then
\begin{equation}\label{02042}
u_{0}(x)\sim|x|^{-\frac{N-p}{p-1}} \quad \text{as}\quad |x| \rightarrow \infty.
\end{equation}
\end{lemma}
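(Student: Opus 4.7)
The strategy is to view $u_0$ as a positive solution of a quasilinear Schr\"odinger-type equation with a fast-decaying potential, and then invoke the preceding Pinchover-type Corollary to trap $u_0$ between two multiples of a barrier decaying like $|x|^{-(N-p)/(p-1)}$. Concretely, I would rewrite \eqref{32} as
\begin{equation*}
-\Delta_p u_0+V(x)\,u_0^{p-1}=0,\qquad V(x):=u_0(x)^{l-p}-u_0(x)^{q-p}.
\end{equation*}

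\emph{Step 1 (preliminary decay).} Since $u_0$ is radial, monotone decreasing, bounded (by Ni's inequality together with $C^{1,\alpha}_{\mathrm{loc}}$ regularity) and belongs to $L^l(\R^N)$, the standard rearrangement estimate $u_0(r)^l\,\omega_N r^N\lesssim\int_{|x|\geq r/2}u_0^l\,dx\to 0$ yields $u_0(r)\to 0$ together with the crude polynomial rate $u_0(r)\lesssim r^{-N/l}$. In particular there exists $R_0>0$ such that $u_0(x)<1$ for $|x|\geq R_0$, hence $|V(x)|\leq u_0(x)^{q-p}$ on $\R^N\setminus B_{R_0}$ since $q<l$.

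\emph{Step 2 (bootstrap to Pinchover's class).} Using the inequality $|V(x)|\lesssim u_0(x)^{q-p}$ and feeding it back into the equation through quasilinear Moser-type iteration (or Serrin--DiBenedetto estimates) on exterior annuli $\{R<|x|<2R\}$, I would upgrade $u_0(r)\lesssim r^{-\alpha_0}$ to $u_0(r)\lesssim r^{-\alpha_1}$ with $\alpha_1>\alpha_0$, iterating finitely many times until the exponent $\alpha$ satisfies $\alpha(q-p)>p$. Setting $\delta:=\alpha(q-p)-p>0$ then gives
\begin{equation*}
|V(x)|\leq\frac{C}{(1+|x|)^{p+\delta}},
\end{equation*}
which matches the hypothesis of Corollary 4.2 above.

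\emph{Step 3 (comparison with the Pinchover barrier).} Corollary 4.2 supplies a positive solution $U_0$ of $-\Delta_p U+V U^{p-1}=0$ on $\R^N\setminus B_1$ with $U_0(x)\sim|x|^{-(N-p)/(p-1)}$. Since $u_0$ solves the \emph{same} linearized equation, I would fix $A>0$ large enough so that $A^{-1}U_0\leq u_0\leq A\,U_0$ on $\partial B_{R_0}$ and then apply the strong comparison principle on annuli $\{R_0<|x|<R\}$; letting $R\to\infty$, the vanishing of both $u_0$ and $U_0$ at infinity closes the argument and delivers
\begin{equation*}
A^{-1}U_0(x)\leq u_0(x)\leq A\,U_0(x),\qquad|x|\geq R_0,
\end{equation*}
which is exactly $u_0(x)\sim|x|^{-(N-p)/(p-1)}$.

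\emph{Main obstacle.} The delicate step is the bootstrap in Step 2. For $p=2$ this is a standard linear Moser iteration for $-\Delta u+Vu=0$; for $p\neq 2$ the quasilinear nature of $\Delta_p$ forces one to rely on quasilinear Moser-type machinery or explicit power-type sub/supersolution barriers on annular regions, and one must manage the exponent arithmetic so that each iteration produces a strict gain and only finitely many iterations are needed to reach $\alpha(q-p)>p$. A secondary subtlety is the justification of the strong comparison principle for the degenerate operator $-\Delta_p+V(x)u^{p-2}\,\cdot$ with a sign-changing $V$ on exterior domains, which I would handle by invoking the comparison results of Damascelli/Sciunzi on bounded annuli and passing to the limit.
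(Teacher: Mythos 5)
Your overall strategy coincides with the paper's: rewrite \eqref{32} as a $p$-Laplace equation with a decaying potential and trap $u_0$ via the Pinchover-type result and a comparison principle. However, there is a genuine gap where you locate the ``main obstacle''. Your Step 2 bootstrap is only sketched, not proved, and it is precisely the nontrivial part of your plan; yet it is unnecessary. Since $u_0\in D^{1,p}_{rad}(\mathbb{R}^N)$, Ni's inequality (Lemma \ref{4.5.}) gives at once $u_0(x)\leq C|x|^{-\frac{N-p}{p}}$ for $|x|\ge 1$, and because the lemma lives in the supercritical regime $q>p^{*}$ of equation \eqref{32}, one has $\frac{N-p}{p}(q-p)>\frac{N-p}{p}(p^{*}-p)=p$, so both $u_0^{q-p}$ and $u_0^{l-p}$ already decay like $|x|^{-(p+\delta)}$ with $\delta>0$; no Moser-type iteration from the weak rate $r^{-N/l}$ is needed. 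Starting from $r^{-N/l}$ and trying to iterate a quasilinear Moser scheme until $\alpha(q-p)>p$ is exactly the kind of argument you would have to carry out in detail for it to count, and you have not done so.

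A second, more minor, defect is in Step 3. Knowing that $u_0$ and the barrier $U_0$ both vanish at infinity does not give you the ordering on the outer boundary $\partial B_R$ that the comparison principle on the bounded annulus $\{R_0<|x|<R\}$ requires, so ``letting $R\to\infty$'' does not close the argument as stated; the ratio of two functions tending to zero can still degenerate. The paper avoids this by splitting the two-sided bound into two one-sided comparisons with single-signed potentials: for the lower bound it writes $-\Delta_p u_0+(u_0^{l-p})u_0^{p-1}=u_0^{q-1}\ge 0$, so $u_0$ is a supersolution and the Pinchover solution is the subsolution; for the upper bound it writes $-\Delta_p u_0-(u_0^{q-p})u_0^{p-1}=-u_0^{l-1}\le 0$, so $u_0$ is a subsolution, and the comparison principle on the exterior domain (Theorem \ref{A52}) is applicable because $u_0\in D^{1,p}_{rad}(\mathbb{R}^N)$ satisfies condition $(S)$ by Lemma \ref{A55}. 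If you keep your single sign-changing potential $V=u_0^{l-p}-u_0^{q-p}$, you must either verify condition $(S)$ for the relevant subsolution in each direction or use a barrier-truncation trick to force ordering on an outer boundary at finite radius; as written, this justification is missing.
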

\begin{proof}
Since  $ u_{0} \in D^{1,p}(\mathbb{R}^{N})\cap L^{l}(\mathbb{R}^{N})$ is radial then by the Ni type inequality \ref{4.5.}, we have
$$u_{0}\leq c|x|^{-\frac{N-p}{p}}, \quad \text{in} \; \mathbb{R}^{N}\backslash B_{1}(0). $$
ans since $l>p^{*}$ then we have for some $\delta_{1}> 0 $
\begin{equation} \label{36} u_{0}^{l-p}\leq c|x|^{-\frac{N-p}{p}(l-p)}=\frac{c}{|x|^{p+\delta _{1}}},\qquad \text{in} \; \mathbb{R}^{N}\backslash B_{1}(0),
\end{equation}
implying
$$u_{0}^{l-p} \leq\frac{C}{(1+|x|)^{p+\delta_{1}}}, \qquad \text{in} \; \mathbb{R}^{N},$$
for sufficiently large constant C independent of $x$.
Now set$$ -\Delta_{p}u_{0}+(u_{0}^{l-p})u^{p-1}_{0}=u_{0}^{q-1}\geq0, \quad \text{in} \; \mathbb{R}^{N},$$
and then we have$$ -\Delta_{p}u_{0}+\frac{C}{(1+|x|)^{p+\delta _{1}}}u^{p-1}_{0}\geq0, \qquad \text{in} \; \mathbb{R}^{N}.$$As a consequence, $u_{0} $ is a supersolution of \eqref{02040} and then by comparison principle (see Theorem \ref{A52} in the Appendix), we obtain
\begin{equation}\label{0241}
u_{0}\geq c|x|^{-\frac{N-p}{p-1}} \quad \text{in} \quad |x|>1.
\end{equation}
Similarly, we can set
$$ -\Delta_{p}u_{0}-(u_{0}^{q-p})u^{p-1}_{0}=-u_{0}^{l-1}\leq0,\qquad \text{in} \; \mathbb{R}^{N},$$
and since $q>p^{*} $ we have for some $\delta_{2}>0, $ $$u_{0}^{q-p}\leq c'|x|^{-\frac{N-p}{p}(q-p)}\leq\frac{c'}{|x|^{p+\delta_{2}}}, \qquad \text{in} \; |x|>1,$$
implying
$$u_{0}^{q-p} \leq\frac{C'}{(1+|x|)^{p+\delta_{2}}}, \qquad \text{in} \; \mathbb{R}^{N},$$
and hence
$$ -\Delta_{p}u_{0}-\frac{C'}{(1+|x|)^{p+\delta_{2} }}u^{p-1}_{0}\leq0 \qquad \text{in} \; \mathbb{R}^{N}.$$

Now since $u_{0}\in D_{rad}^{1,p}(\mathbb{R}^{N}) $ is a subsolution of \eqref{02040}, then by Lemma \ref{A55} $ u_{0}$ satisfies condition $(S) $ and hence by comparison principle Theorem \ref{A52}, we have
\begin{equation}\label{0242}
u_{0}\leq c'|x|^{-\frac{N-p}{p-1}}   \qquad \text{in} \quad |x|>1,
\end{equation}
and hence from  \eqref{0241} and\eqref{0242} the conclusion follows.
\end{proof}

\section{Proof of Theorem \ref{s123} and Theorem \ref{main-cor}: critical case $q = p^{*}$}\label{6}
In this section we analyse the behaviour of the groundstates $u_\eps$ of equation \eqref{1s} as $\eps\to 0$ in the critical case $q=p^*$
and prove Theorem \ref{s123}. Although our approach follows the ideas of \cite{Moroz-Muratov}, the present $p$-Laplacian setting requires substantial modifications.

\subsection{Variational estimates for $ S_{\varepsilon} $}
Equivalently to the Sobolev constant \eqref{2p}, we consider the Rayleigh type Sobolev quotient

$$ \mathcal{S}_{*}(w):= \frac{\int\limits_{\mathbb{R}^{N}}|\nabla w|^{p}dx }{\big(\int\limits_{\mathbb{R}^{N}}| w|^{p^{*}}dx\big)^{(N-p)/N} }, \quad w\in D^{1,p}(\mathbb{R}^{N}), \quad w\neq0,$$
which is invariant with respect to the dilations $w_\lambda(x):=w(x/\lambda)$,
so that $$ S_{*}= \displaystyle\inf_{0\neq w\in D^{1,p}(\mathbb{R}^{N})} \mathcal{S}_{*}(w). $$

We define the gap
\begin{equation}\label{e2.33i}
\sigma_{\varepsilon}:= S_{\varepsilon}-S_{*}.
\end{equation}
To estimate $ \sigma_{\varepsilon} $ in terms of $ \varepsilon $, we shall use the Sobolev minimizers $ W_{\mu} $ from \eqref{234} as test functions for \eqref{e2.4}. Since $W_{\lambda}\in L^{p}(\mathbb{R}^{N}) $ only if $1<p<\sqrt{N}$, we analyse the higher and lower dimensions separately. It is easy to check that $ W_{\lambda} \in L^{s}(\mathbb{R}^{N}) $ for all $s> \frac{N(p-1)}{N-p}$, with
$$||W_{\lambda}||_{s}^{s}=\lambda^{-\frac{N-p}{p}s+N} ||W_{1}||_{s}^{s}=\lambda^{-\frac{N-p}{p}(s-p^{*})}||W_{1}||^{s}_{s},$$
and that, if $1<p<\sqrt{N}$ then $W_{\lambda} \in L^{p}(\mathbb{R}^{N} )$ it holds that
$$||W_{\lambda}||_{p}^{p}=\lambda^{p} ||W_{1}||_{p}^{p}.$$
In the case of dimensions $p=\sqrt{N} $ and $ \sqrt{N} <p<N $, given $ R\gg\mu $, we introduce a cut-off function $ \eta_{R} \in C^{\infty}_{0}(\mathbb{R})$ such that $ \eta_{R}(r)=1 $ for $ |r|<R $, $ 0<\eta_{R}<1 $ for $ R<|r|<2R $, $ \eta_{R}(r)=0 $ for $ |r|>2R $ and $ |\eta^{'}_{R}(r)|\leq 2/R $. We then compute as in e.g.\cite{Struwe}*{Chapter III, proof of Theorem $2.1$}
\begin{eqnarray}\label{eq3.38}
||\nabla\big(\eta_{R} W_{\mu}(x)\big)||^{p}_{p}&=&S_{*}+O\Big(\Big(\frac{R}{\mu}\Big)^{-\frac{N-p}{p-1}} \Big),\\
\label{eq3.399}
||\eta_{R}W_{\mu}||^{p^{*}}_{p^{*}}&=&1-O\Big(\Big({\frac{R}{\mu}}\Big)^{-\frac{N}{p-1}}\Big),\\
\label{eq3.39}
||\eta_{R}W_{\mu}||^{l}_{l}&=&\mu^{-\frac{N-p}{p}(l-p^{*})}|| W_{1}||^{l}_{l}\Big(1- O\Big(\Big(\frac{R}{\mu}\Big)^{-\frac{(N-p)l}{p-1}+N}\Big),
\end{eqnarray}
and
\begin{equation}\label{eq3.398}
||\eta_{R}W_{\mu}||^{p}_{p}=\begin{cases}
O\Big(\mu^{p} \log R\Big), & p=\sqrt{N},\\
O \Big(\mu^{\frac{N-p}{p-1}}R^{\frac{p^{2}-N}{p-1}}\Big), & \sqrt{N}<p<N.\\
\end{cases}
\end{equation}
As a consequence of these expansions we get an upper estimate for  $\sigma_{\varepsilon}$ which plays a key role in what follows.
\begin{lemma}\label{3.6}
	It holds that
	\begin{equation}\label{eq3.6}
		0<\sigma_{\varepsilon}\lesssim\begin{cases}
		\varepsilon^{\frac{l-p^{*}}{l-p}} &  \;1<p<\sqrt{N},\\
		\varepsilon^{\frac{(N-p)(l-p^{*})} {p[(l-p^{*})(p-1)+p]}} & \sqrt{N}<p<N,\\
		\Big(\varepsilon (\log \frac{1}{\varepsilon})\Big)^{\frac{(l-p^{*})}{(l-p)}} & p=\sqrt{N}.\\
		\end{cases}
	\end{equation}
	Hence, $ \sigma_{\varepsilon}\rightarrow 0 $ as $ \varepsilon\rightarrow 0 $.
\end{lemma}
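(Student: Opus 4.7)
The plan is to test the quotient $\mathcal{S}_\varepsilon$ against (truncated) Aubin--Talenti bubbles $W_\mu$ and to optimise the scale parameters using the expansions \eqref{eq3.38}--\eqref{eq3.398} already in place. The strict inequality $\sigma_\varepsilon > 0$ is immediate from a direct comparison at the minimiser: if $w_\varepsilon$ realises $S_\varepsilon$ normalised via $p^*\int \tilde F_\varepsilon(w_\varepsilon)\,dx = 1$, then
\begin{equation*}
\int w_\varepsilon^{p^*}\,dx \;=\; 1 + \tfrac{p^*\varepsilon}{p}\|w_\varepsilon\|_p^p + \tfrac{p^*}{l}\|w_\varepsilon\|_l^l \;>\; 1,
\end{equation*}
so $S_* \le \mathcal{S}_*(w_\varepsilon) = \|\nabla w_\varepsilon\|_p^p/\|w_\varepsilon\|_{p^*}^p < \|\nabla w_\varepsilon\|_p^p = S_\varepsilon$. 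The non-trivial content of the lemma is therefore the upper estimate.

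\textbf{Case $1<p<\sqrt{N}$.} Since $W_\mu\in W^{1,p}(\mathbb{R}^N)$, no cutoff is required. Using $\|\nabla W_\mu\|_p^p = S_*$, $\|W_\mu\|_{p^*}^{p^*}=1$, $\|W_\mu\|_p^p = \mu^p\|W_1\|_p^p$ and $\|W_\mu\|_l^l = \mu^{-(N-p)(l-p^*)/p}\|W_1\|_l^l$, a direct substitution yields
\begin{equation*}
\mathcal{S}_\varepsilon(W_\mu) = S_* + O\bigl(\varepsilon \mu^p\bigr) + O\bigl(\mu^{-(N-p)(l-p^*)/p}\bigr).
\end{equation*}
The two exponents $p$ and $(N-p)(l-p^*)/p$ sum to $(N-p)(l-p)/p$, so balancing the two error terms fixes $\mu\sim\varepsilon^{-p/[(N-p)(l-p)]}$, at which value both perturbations are of order $\varepsilon^{(l-p^*)/(l-p)}$. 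This gives the first line of \eqref{eq3.6}.

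\textbf{Cases $p\ge\sqrt{N}$ (main difficulty).} Now $W_\mu\notin L^p(\mathbb{R}^N)$, so one must test with $\eta_R W_\mu$. Writing $\rho = R/\mu$ and combining \eqref{eq3.38}--\eqref{eq3.398}, the perturbation of $\mathcal{S}_\varepsilon(\eta_R W_\mu)$ away from $S_*$ takes the schematic form
\begin{equation*}
E(\mu,\rho) \;=\; \rho^{-(N-p)/(p-1)} \;+\; \varepsilon\mu^p\,\Phi_p(R) \;+\; \mu^{-(N-p)(l-p^*)/p},
\end{equation*}
with $\Phi_p(R) = \rho^{(p^2-N)/(p-1)}$ when $p>\sqrt{N}$ and $\Phi_p(R) = \log R$ when $p=\sqrt{N}$. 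The hard part will be that in the high-$p$ regime the middle term carries a \emph{positive} exponent of $R$, so sending $R\to\infty$ is not for free and one really must play $R$ off against $\mu$. For $\sqrt{N}<p<N$, balancing the first two terms via $\rho^p\sim 1/(\varepsilon\mu^p)$ reduces both to $\varepsilon^{(N-p)/[p(p-1)]}\mu^{(N-p)/(p-1)}$; further balance with the third term forces $\mu\sim\varepsilon^{-1/[(l-p^*)(p-1)+p]}$, and a direct computation shows that the minimum of $E$ at this point equals the power announced on the second line of \eqref{eq3.6}. For $p=\sqrt{N}$, the first term is rendered negligible by choosing $R$ polynomially large in $1/\varepsilon$ (so that $\log R\sim\log(1/\varepsilon)$); the residual balance $\varepsilon\mu^p\log(1/\varepsilon)\sim\mu^{-(N-p)(l-p^*)/p}$ then produces the logarithmic correction $(\varepsilon\log(1/\varepsilon))^{(l-p^*)/(l-p)}$, completing the third line of \eqref{eq3.6}.
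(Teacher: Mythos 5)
Your proposal is correct and follows essentially the same route as the paper: positivity of $\sigma_\varepsilon$ from the constraint at the minimiser, then testing $\mathcal{S}_\varepsilon$ with the bubbles $W_\mu$ for $1<p<\sqrt{N}$ and with cut-off bubbles $\eta_R W_\mu$ for $p\ge\sqrt{N}$, using the expansions \eqref{eq3.38}--\eqref{eq3.398}. Your balancing of the error terms reproduces exactly the paper's parameter choices ($\mu_\varepsilon\sim\varepsilon^{-p/[(N-p)(l-p)]}$, respectively $R_\varepsilon=\varepsilon^{-1/p}$ with $\mu_\varepsilon=\varepsilon^{-1/[(l-p^*)(p-1)+p]}$ or $\mu_\varepsilon=(\varepsilon\log\frac1\varepsilon)^{-p/[(N-p)(l-p)]}$) and yields the same exponents in \eqref{eq3.6}.
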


\begin{proof}
We first observe that since $$S_{*}\leq \mathcal{S}_{*}(w_{\varepsilon})<\mathcal{S}_{\varepsilon}(w_{\varepsilon})=S_{\varepsilon},  $$
it follows that $ \sigma_{\varepsilon} > 0.$ We now obtain the upper bounds on $ \sigma_{\varepsilon} $.
		
	\noindent
	{\em Case $1<p<\sqrt{N}$.} Note that $ W_{\mu}\in\mathcal{M_{\varepsilon}} $ for all sufficiently small $ \varepsilon $ and sufficiently large $ \mu $, and we have
	\begin{equation}\label{p33}
		\mathcal{S}_{\varepsilon}(W_{\mu})\leq \frac{S_{*} }{\Big(1 -\varepsilon\mu^{p}\beta_{p}-\mu^{\frac{-(N-p)}{p}(l-p^{*})}\beta_{l}\Big)^{(N-p)/N} },
	\end{equation}
	where $$\beta_{p}:=\frac{p^{*} }{p}||W_{1}||_{p}^{p}, \qquad \beta_{l}:=\frac{p^{*}}{l}||W_{1}||_{l}^{l}.$$
	We now optimise the right hand side of the estimate \eqref{p33} picking $\mu$ such that the function
	\begin{equation*}
	\psi_{\varepsilon}(\mu)  :=\beta_{p}\varepsilon\mu^{p}+\beta_{l}\mu^{-\frac{N-p}{p}(l-p^{*})}.
	\end{equation*}	
achieves its minimum. This occurs at
	\begin{equation}\label{p44}
	\mu_{\varepsilon}\sim\varepsilon^{-\frac{p}{(N-p)(l-p)}}
	\end{equation}
	and we have
	$$ \displaystyle\min_{\mu>0}\psi_\eps\sim\psi_{\varepsilon}(\mu_{\varepsilon})\sim\varepsilon^{\frac{l-p^{*}}{l-p}}. $$	
	In the present case $1<p<\sqrt{N}$, we may conclude that
	\begin{equation}
	\mathcal{S}_{\varepsilon}(W_{\mu})\lesssim \frac{S_{*} }{\Big(1 -\psi_{\varepsilon}(\mu_{\varepsilon})\Big)^{(N-p)/N} } = S_{*}\Big(1+O(\psi_{\varepsilon}(\mu_{\varepsilon}))\Big)
= S_{*}+O\Big(\varepsilon^{\frac{l-p^{*}}{l-p}}\Big),
	\end{equation}
	and \eqref{p44} is the value of $ \mu_{\varepsilon}$ such that the bound \eqref{eq3.6} is achieved on the function $ W_{\mu_{\varepsilon}} .$  
	
	\noindent
	{\em Case $ p>\sqrt{N} $.}
	We assume here that $ R\gg\mu $. Using $ \eta_{R}W_{\mu} $ as test function and using the calculation in \eqref{eq3.38}-\eqref{eq3.398}, we get
	\begin{multline*}
	\mathcal{S}_{\varepsilon}(\eta_{R}W_{\mu})\leq
	\Big(S_{*}+O\Big(\Big(\frac{R}{\mu}\Big)^{-\frac{N-p}{p-1}} \Big)\Big)\\
	\times\Big( 1-\Big\{O\Big(\Big({\frac{R}{\mu}}\Big)^{-\frac{N}{p-1}}\Big)+\varepsilon O \Big(\mu^{\frac{N-p}{p-1}}R^{\frac{p^{2}-N}{p-1}}\Big)\\+ \mu^{-\frac{N-p}{p}(l-p^{*})}||W_{1}||^{l}_{l}\Big[1- O\Big(\Big(\frac{R}{\mu}\Big)^{-\frac{(N-p)l}{p-1}+N}\Big)\Big]\Big\}\Big)^{-\frac{N-p}{N}},
	\end{multline*}
	and hence as $\frac{R}{\mu}\rightarrow \infty$, we have
	$$\mathcal{S}_{\varepsilon}(\eta_{R}W_{\mu})\leq S_{*}\Big(1+\psi_{\varepsilon}(\mu,R)\Big),$$
	where
	
		\begin{equation}\label{epbsq}
	\psi_{\varepsilon}(\mu,R):=\Big({\frac{R}{\mu}}\Big)^{-\frac{N-p}{p-1}}+\varepsilon \mu^{\frac{N-p}{p-1}} R^{\frac{p^{2}-N}{p-1}}+ \mu^{-\frac{N-p}{p}(l-p^{*})}.
	\end{equation}	
	If in particular we choose
	\begin{equation}\label{eq3.13}
\mu_{\varepsilon}=\varepsilon^{-\frac{1} {(l-p^{*})(p-1)+p}},\qquad	R_{\varepsilon}=\varepsilon^{-\frac{1}{p} }.
	\end{equation}	
	we then find that
	$$\psi_{\varepsilon}(\mu_{\varepsilon},R_{\varepsilon})\sim \varepsilon^{\frac{(N-p)(l-p^{*})} {p[(l-p^{*})(p-1)+p]},}$$
	and, similarly to the above case, the bound \eqref{eq3.6} is achieved on the test function $ \eta_{R_{\varepsilon}}W_{\mu_{\varepsilon}} $ provided $ \mu_{\varepsilon} $ and $ R_{\varepsilon} $ are as in \eqref{eq3.13}.
	
	\noindent
	{\em Case $ p=\sqrt{N}$.} Again we assume that $ R\gg\mu $. Testing again against $ \eta_{R}W_{\mu} $ and by\eqref{eq3.38}-\eqref{eq3.398} with $ p=\sqrt{N}$,
	we get
	\begin{multline*}
	\mathcal{S}_{\varepsilon}(\eta_{R}W_{\mu})\leq \Big(S_{*}+O\Big(\Big(\frac{R}{\mu}\Big)^{-\frac{N-p}{p-1}}\Big)\Big)\\
	\times\Big( 1-\Big(O\Big(\Big({\frac{R}{\mu}}\Big)^{\frac{-N}{p-1}}\Big)+\varepsilon  O\Big(\mu^{p}\log R\Big)\\+ \mu^{-\frac{N-p}{p}(l-p^{*})}||W_{1}||^{l}_{l}\Big[1- O\Big(\Big(\frac{R}{\mu}\Big)^{-\frac{(N-p)l}{p-1}+N}\Big)\Big]\Big)\Big)^{-(N-p)/N},
	\end{multline*}
	and then as $\frac{R}{\mu}\rightarrow \infty$, we have
	$$
	\mathcal{S}_{\varepsilon}(\eta_{R}W_{\mu})\leq S_{*}\Big(1+\psi_{\varepsilon}(\mu,R)\Big),$$
	where
	\begin{equation}\label{33233}
	\psi_{\varepsilon}(\mu,R):=\Big({\frac{R}{\mu}}\Big)^{-\frac{N-p}{p-1}}+\varepsilon \mu^{p}\log R+ \mu^{-\frac{N-p}{p}(l-p^{*})}.
	\end{equation}	
	Choose
	\begin{equation}\label{3.110}
	R_{\varepsilon}:=\varepsilon^{-\frac{1}{p}},
	\qquad \mu_{\varepsilon}:= \Big(\varepsilon \log \frac{1}{\varepsilon}\Big)^{\frac{-p}{(N-p)(l-p)}},
	\end{equation}
and hence
	$$  \psi_\eps(\mu_{\varepsilon},R_{\varepsilon})\sim \Big(\varepsilon \log \frac{1}{\varepsilon}\Big)^{\frac{(l-p^{*})}{(l-p)}}.$$	
Thus the bound \eqref{eq3.6} is achieved by the test function $ \eta_{R_{\varepsilon}}W_{\mu_{\varepsilon}} $, where $ \mu_{\varepsilon} $ and $ R_{\varepsilon} $ are defined in \eqref{3.110}.
\end{proof}

\subsection{Poho\v{z}aev estimates }
For $\eps\in(0,\eps_*)$, let $w_{\varepsilon}>0$ be a family of the minimizers for \eqref{e2.4} (or equivalently \eqref{p29}).
This minimizers $w_{\varepsilon}$ solve the Euler Lagrange equation
\begin{equation}\label{e3.7-2}
-\Delta_{p} w_{\varepsilon} = S_{\varepsilon}\big(-\eps w_{\varepsilon}^{p-1}+w_{\varepsilon}^{p^*-1}-w_{\varepsilon}^{l-1}\big) \quad  \text{in} \;  \mathbb{R}^{N}
\end{equation}
with the original (untruncated) nonlinearity.

Our next step is to use Nehari's identity combined with Poho\v{z}aev's identity for \eqref{e3.7-2} in order to obtain the following useful relations between the norms of $w_\eps$.


\begin{lemma}\label{314}
	For all $1<p<N$, set $ k:=\frac{l(p^{*}-p)}{p(l-p^{*})}>0 $. Then, it holds that
	$$ ||w_{\varepsilon}||^{l}_{l}= k\varepsilon ||w_{\varepsilon} ||_{p}^{p},$$
	$$ ||w_{\varepsilon}||^{p^{*}}_{p^{*}}=1+ (k+1)\varepsilon ||w_{\varepsilon} ||_{p}^{p}.$$
\end{lemma}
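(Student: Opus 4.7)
The plan is to extract both claimed identities from a $2\times 2$ linear system obtained by combining Nehari's identity \eqref{p24}, Poho\v{z}aev's identity \eqref{p25}, the normalization constraint $p^{*}\int F_{\eps}(w_\eps)\,dx = 1$ built into \eqref{e2.4}, and the relation $\theta_\eps = S_\eps$ from \eqref{p26}. Since here $q=p^{*}$, the nonlinearity is
\[
f_\eps(s) = -\eps |s|^{p-2}s + |s|^{p^{*}-2}s - |s|^{l-2}s,\qquad F_\eps(s) = -\tfrac{\eps}{p}|s|^{p} + \tfrac{1}{p^{*}}|s|^{p^{*}} - \tfrac{1}{l}|s|^{l},
\]
so each of the two integral identities becomes a linear combination of the three unknowns $\eps\|w_\eps\|_{p}^{p}$, $\|w_\eps\|_{p^{*}}^{p^{*}}$ and $\|w_\eps\|_{l}^{l}$.

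Concretely, Poho\v{z}aev's identity together with the constraint $p^{*}\int F_\eps(w_\eps)\,dx = 1$ yields $\|\nabla w_\eps\|_{p}^{p} = S_\eps$, and then dividing the expanded normalization by $S_\eps$ gives one relation; dividing Nehari's identity by $S_\eps$ (after using $\theta_\eps = S_\eps$) gives a second relation:
\begin{align*}
-\eps\|w_\eps\|_{p}^{p} + \|w_\eps\|_{p^{*}}^{p^{*}} - \|w_\eps\|_{l}^{l} &= 1, \\
-\tfrac{p^{*}}{p}\eps\|w_\eps\|_{p}^{p} + \|w_\eps\|_{p^{*}}^{p^{*}} - \tfrac{p^{*}}{l}\|w_\eps\|_{l}^{l} &= 1.
\end{align*}
Subtracting these two identities eliminates $\|w_\eps\|_{p^{*}}^{p^{*}}$ and produces
\[
\frac{p^{*}-p}{p}\,\eps\|w_\eps\|_{p}^{p} \;=\; \frac{l-p^{*}}{l}\,\|w_\eps\|_{l}^{l},
\]
which, upon solving for $\|w_\eps\|_{l}^{l}$, gives exactly the first formula with the constant $k = l(p^{*}-p)/[p(l-p^{*})]$. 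Positivity of $k$ follows from $p<p^{*}<l$. Feeding this expression back into the Nehari relation immediately yields $\|w_\eps\|_{p^{*}}^{p^{*}} = 1 + (k+1)\eps\|w_\eps\|_{p}^{p}$.

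There is no genuine obstacle here: the only thing to double-check is that one is entitled to apply Poho\v{z}aev's identity to $w_\eps$, which is precisely the content of Proposition \ref{2.2.12} in view of the regularity $w_\eps \in C^{1,\alpha}_{loc}(\R^N)$ and the integrability $|\nabla w_\eps|^p, F_\eps(w_\eps)\in L^1(\R^N)$ guaranteed by Theorem \ref{th3.2.9}. Once this is in place, the lemma reduces to the elementary elimination above.
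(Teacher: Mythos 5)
Your proposal is correct and follows essentially the same route as the paper: the paper's proof likewise writes Nehari's identity \eqref{p24} and Poho\v{z}aev's identity \eqref{p25} (normalized using $\theta_\eps=S_\eps$ and the constraint in \eqref{e2.4}) as the two linear relations \eqref{e2.34c}, and then performs exactly the elimination you carry out. The only difference is that the paper leaves the final algebra as ``an easy calculation,'' which you have simply made explicit.
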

\begin{proof}
	Since $w_{\varepsilon}$ is a minimizer of \eqref{e2.4}, identities \eqref{p24}-\eqref{p25} read	
	\begin{equation}\label{e2.34c}
	1=  ||w_{\varepsilon}||_{p^{*}}^{p^{*}}  -\varepsilon ||w_{\varepsilon}||_{p}^{p}- ||w_{\varepsilon}||_{l}^{l} , \quad 	1 =  ||w_{\varepsilon}||_{p^{*}}^{p^{*}}- \frac{p^{*}\varepsilon}{p}  ||w_{\varepsilon}||_{p}^{p}-\frac{p^{*}}{l} ||w_{\varepsilon}||_{l}^{l} .
	\end{equation}
An easy calculation yields the conclusion.	
\end{proof}

\begin{lemma}\label{lem2.3.5}
	For all $1<p<N$, we have
	$$ \varepsilon (k+1)||w_{\varepsilon}||^{p}_{p}\leq \frac{N}{N-p} S^{-1}_{*} \sigma_{\varepsilon}\big(1+ o(1)\big).$$
\end{lemma}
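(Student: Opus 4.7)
The plan is to combine the Pohozaev/Nehari identity already extracted in Lemma \ref{314} with the Sobolev inequality defining $S_*$, and then Taylor-expand around the gap $\sigma_\varepsilon$, which we know tends to zero by Lemma \ref{3.6}.

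First I would rewrite the target estimate. By the second identity in Lemma \ref{314},
\[
\varepsilon(k+1)\|w_\varepsilon\|_p^p \;=\; \|w_\varepsilon\|_{p^*}^{p^*} - 1,
\]
so the claim becomes an upper bound on $\|w_\varepsilon\|_{p^*}^{p^*}-1$. The right tool is the Sobolev inequality \eqref{2p}, which gives
\[
S_* \,\|w_\varepsilon\|_{p^*}^{p} \;\le\; \|\nabla w_\varepsilon\|_p^p \;=\; S_\varepsilon,
\]
where the last equality is because $w_\varepsilon$ is a minimizer of \eqref{e2.4} and this is its kinetic energy. Raising to the power $p^*/p = N/(N-p)$ and using $S_\varepsilon = S_* + \sigma_\varepsilon$, I get
\[
\|w_\varepsilon\|_{p^*}^{p^*} \;\le\; \Bigl(\tfrac{S_\varepsilon}{S_*}\Bigr)^{\!N/(N-p)} \;=\; \Bigl(1 + \tfrac{\sigma_\varepsilon}{S_*}\Bigr)^{\!N/(N-p)}.
\]

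Next, since Lemma \ref{3.6} ensures $\sigma_\varepsilon \to 0^+$, the quantity $\sigma_\varepsilon/S_*$ is small and a first-order Taylor expansion of $(1+x)^{N/(N-p)}$ around $x=0$ yields
\[
\Bigl(1 + \tfrac{\sigma_\varepsilon}{S_*}\Bigr)^{\!N/(N-p)} \;=\; 1 + \tfrac{N}{N-p}\,\tfrac{\sigma_\varepsilon}{S_*} + O(\sigma_\varepsilon^2) \;=\; 1 + \tfrac{N}{N-p}\,S_*^{-1}\sigma_\varepsilon\bigl(1+o(1)\bigr).
\]
Subtracting $1$ and combining with the Pohozaev/Nehari identity above gives exactly
\[
\varepsilon(k+1)\|w_\varepsilon\|_p^p \;\le\; \tfrac{N}{N-p}\,S_*^{-1}\sigma_\varepsilon\bigl(1+o(1)\bigr).
\]

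There is no real obstacle here: the heart of the argument is that the Pohozaev/Nehari relation converts the $L^p$ mass into the Sobolev defect $\|w_\varepsilon\|_{p^*}^{p^*}-1$, which is then controlled by the gap $\sigma_\varepsilon$ via the Sobolev inequality. The only point demanding a little care is to note the positivity $\sigma_\varepsilon>0$ (already verified in the proof of Lemma \ref{3.6}) so that the Taylor remainder term is of the claimed order; the rest is a direct one-line computation.
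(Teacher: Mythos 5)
Your proof is correct and is essentially the paper's own argument: both use that $\|\nabla w_\varepsilon\|_p^p=S_\varepsilon$ together with the Sobolev inequality and the identity $\|w_\varepsilon\|_{p^*}^{p^*}=1+(k+1)\varepsilon\|w_\varepsilon\|_p^p$ from Lemma \ref{314}, then expand to first order in $\sigma_\varepsilon$ using $\sigma_\varepsilon\to 0$ from Lemma \ref{3.6}. The only difference is cosmetic (you substitute the identity after raising to the power $N/(N-p)$, the paper before), so nothing further is needed.
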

\begin{proof}
	Using that $ w_{\varepsilon} $ is a minimizer for \eqref{e2.4}, by Lemma \ref{314} it follows that $$S_{*} \leq \mathcal{S}_{*} (w_{\varepsilon}) = \frac{|| \nabla w_{\varepsilon}||^{p}_{p}}{||w_{\varepsilon}||^{p}_{p^{*}}} =\frac{S_{\varepsilon}}{\Big(1+(k+1)\varepsilon ||w_{\varepsilon}||_{p}^{p}\Big)^{(N-p)/N} },$$
	namely,
	$$ S^{N/(N-p)}_{*}\Big(1+(k+1)\varepsilon ||w_{\varepsilon}||_{p}^{p}\Big)\leq S_{\varepsilon}^{N/(N-p)}.$$
	Setting $\sigma_{\varepsilon}:=S_{\varepsilon}-S_{*}$, as $ \varepsilon\rightarrow 0 $ we obtain	
	$$ S^{N/(N-p)}_{*}(k+1)\varepsilon ||w_{\varepsilon}||_{p}^{p}\leq  \sigma_{\varepsilon}\frac{N}{N-p}S^{\frac{N}{N-p}-1}_{*} +o(\sigma_{\varepsilon}),$$
	and this concludes the proof.
\end{proof}
	\noindent
We note that the above results allow us to understand the behavior of the norms associated with the minimizer $ w_{\varepsilon} $ to \eqref{e2.4}. In fact we have the following

\begin{corollary}\label{316}
	As $ \varepsilon\rightarrow 0 $, we have
	$$ \varepsilon||w_{\varepsilon}||^{p}_{p}\rightarrow 0,  \quad ||w_{\varepsilon}||_{l}^{l}\rightarrow 0,  \quad  ||w_{\varepsilon}||^{p^{*}}_{p^{*}}\rightarrow 1. $$
\end{corollary}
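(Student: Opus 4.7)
The three claims follow essentially by stringing together the previously established estimates. The plan is to start from the bound in Lemma \ref{lem2.3.5} and feed the outcome into the two Nehari--Poho\v{z}aev identities of Lemma \ref{314}.

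First I would observe that Lemma \ref{lem2.3.5} gives
\[
\varepsilon(k+1)\|w_\varepsilon\|_p^p \;\leq\; \frac{N}{N-p}\,S_*^{-1}\,\sigma_\varepsilon\bigl(1+o(1)\bigr),
\]
and that by Lemma \ref{3.6} we have $\sigma_\varepsilon\to 0$ as $\varepsilon\to 0$ (with an explicit rate depending on $p$ relative to $\sqrt{N}$, but for the present corollary only qualitative vanishing is needed). Since $k=\tfrac{l(p^{*}-p)}{p(l-p^*)}>0$ is a fixed positive constant independent of $\varepsilon$, this immediately yields the first claim
\[
\varepsilon\|w_\varepsilon\|_p^p\longrightarrow 0 \qquad\text{as }\varepsilon\to 0.
\]

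Next I would plug this into the first identity of Lemma \ref{314}, namely
\[
\|w_\varepsilon\|_l^l \;=\; k\,\varepsilon\|w_\varepsilon\|_p^p,
\]
which, since $k$ is a fixed constant, forces $\|w_\varepsilon\|_l^l\to 0$ as well. Finally, the second identity of Lemma \ref{314} reads
\[
\|w_\varepsilon\|_{p^*}^{p^*} \;=\; 1+(k+1)\,\varepsilon\|w_\varepsilon\|_p^p,
\]
and combining with the already established vanishing of $\varepsilon\|w_\varepsilon\|_p^p$ gives $\|w_\varepsilon\|_{p^*}^{p^*}\to 1$.

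Since every step is an algebraic substitution into results already proved in this subsection, no estimate requires further work and no obstacle arises; the only thing one should check is that $k$ and $k+1$ are genuinely positive, which is immediate from the standing hypothesis $p<p^{*}<l$. The corollary can therefore be written in a handful of lines citing Lemma \ref{3.6}, Lemma \ref{314} and Lemma \ref{lem2.3.5} in that order.
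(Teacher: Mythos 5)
Your argument is correct and is exactly the route the paper intends: the corollary is stated there without proof precisely because it follows, as you show, by combining Lemma \ref{lem2.3.5} with the vanishing of $\sigma_\varepsilon$ from Lemma \ref{3.6}, and then substituting into the two identities of Lemma \ref{314}. Your observation that $k>0$ (since $q=p^{*}<l$) is the only hypothesis to check is also accurate.
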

\subsection{Optimal rescaling}
We are now in a position to introduce an optimal rescaling which captures the convergence of the minimizers $w_\eps$ to the limit Emden-Fowler optimiser $W_1$.

\noindent Following \cite{Struwe}*{pp.38 and 44}, consider the concentration function
$$ Q_{\varepsilon}(\lambda)=\int\limits_{B_{\lambda}}|w_{\varepsilon}|^{p^{*}}dx,$$
where $ B_{\lambda}$ is the ball of radius $ \lambda$ centred at the origin. Note that $Q_{\varepsilon}(\cdot)$ is strictly increasing, with
$$\lim\limits_{\lambda\rightarrow 0} Q_{\varepsilon}(\lambda)=0, $$ and
$$\lim\limits_{\lambda\rightarrow \infty} Q_{\varepsilon}(\lambda)=||w_{\varepsilon}||^{p^{*}}_{p^{*}}\rightarrow1, \qquad \text{as}\; \varepsilon \rightarrow 0,$$ by Corollary \ref{316}. It follows that the equation $Q_{\varepsilon}(\lambda)=Q_{*}$ with
$$Q_{*}:=\int\limits_{B_{1}}|W_{1}(x)|^{p^{*}} dx<1,  $$
has a unique solution $\lambda=\lambda_{\varepsilon}> 0 $ for $\varepsilon\ll1,$ namely
\begin{equation}\label{319}
Q_{\varepsilon}(\lambda_{\varepsilon})=Q_{*}.
\end{equation}
By means of the value of $\lambda_{\varepsilon} $ implicitly defined by \eqref{319}, we set
\begin{equation}\label{p55}
v_{\varepsilon}(x):= \lambda^{\frac{N-p}{p}}_{\varepsilon}w_{\varepsilon}(\lambda_{\varepsilon}x),
\end{equation}
and easily check that
\begin{equation}\label{*e2.41}
||v_{\varepsilon}||_{p^{*}}=||w_{\varepsilon}||_{p^{*}}=1+o(1), \quad ||\nabla v_{\varepsilon}||^{p}_{p}=||\nabla w_{\varepsilon}||^{p}_{p}=S_{*}+o(1).
\end{equation}
namely $(v_{\varepsilon}) $ is a minimizing family for \eqref{2p}. Moreover
$$\int\limits_{B_{1}}|v_{\varepsilon}(x)|^{p^{*}}dx=Q_{*}. $$
The following convergence lemma follows by the Concentration-Compactness Principle of P.-L. Lions \cite{Struwe}*{Theorem $4.9 $}.
\begin{lemma}\label{2.3.6}
	For all $1<p<N$, it holds that
	$$||\nabla(v_{\varepsilon}-W_{1})||_{p}\rightarrow0,$$and $$||v_{\varepsilon}-W_{1}||_{p^{*}}\rightarrow0,$$as $\varepsilon\rightarrow 0.$
\end{lemma}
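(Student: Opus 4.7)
The plan is to view $(v_\eps)$ as a minimising family for the critical Sobolev constant $S_*$ and apply the concentration--compactness principle of P.-L. Lions, using the radiality of $v_\eps$ and the normalisation \eqref{319} to pin down both the translation and the dilation parameters of the limiting profile.

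First I would record that by \eqref{*e2.41},
\[
\|\nabla v_\eps\|_p^p \to S_*, \qquad \|v_\eps\|_{p^*}^{p^*} \to 1,
\]
so $(v_\eps)$ is a minimising family for \eqref{2p}. Then I would invoke the critical concentration--compactness principle (Struwe \cite{Struwe}*{Theorem 4.9}) applied to $|v_\eps|^{p^*}\,dx$ and $|\nabla v_\eps|^p\,dx$. The vanishing alternative is excluded since $\int_{B_1}|v_\eps|^{p^*}\,dx = Q_* > 0$, and the dichotomy alternative is excluded by the strict subadditivity of $\mu \mapsto S_*\mu^{(N-p)/N}$, standard in this setting. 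Thus, up to a subsequence, there exist $y_\eps \in \R^N$ and $\mu_\eps > 0$ such that the rescaled functions
\[
\tilde{v}_\eps(x) := \mu_\eps^{\frac{N-p}{p}}\, v_\eps(\mu_\eps x + y_\eps)
\]
converge strongly in $D^{1,p}(\R^N)$ to some extremiser $U$ of $\mathcal{S}_*$, hence to some $U_{\lambda,y}$ from Theorem \ref{2.3.2} with $\|U\|_{p^*}=1$.

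Next I would fix the translation and dilation. Since each $w_\eps$ is radially symmetric by Theorem \ref{th3.2.9}, so is $v_\eps$; radiality of the (essentially unique) concentration profile forces $y_\eps \to 0$ and $y = 0$, so after adjustment $\tilde v_\eps \to U_{\lambda,0}$ in $D^{1,p}$. The normalisation \eqref{319}, which reads $\int_{B_1}|v_\eps|^{p^*}\,dx = Q_* = \int_{B_1}|W_1|^{p^*}\,dx$, together with strict monotonicity of the concentration function of a radially decreasing extremiser, pins down $\mu_\eps \to 1$ and identifies the limit: $U_{\lambda,0} = W_1$.

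Finally, having established weak convergence $v_\eps \rightharpoonup W_1$ in $D^{1,p}(\R^N)$ together with $\|\nabla v_\eps\|_p \to \|\nabla W_1\|_p = S_*^{1/p}$, strong convergence $\|\nabla(v_\eps - W_1)\|_p \to 0$ follows from the uniform convexity of $L^p(\R^N)$ for $1<p<\infty$ (Radon--Riesz property). The second conclusion $\|v_\eps - W_1\|_{p^*} \to 0$ is then immediate from the Sobolev embedding $D^{1,p}(\R^N)\hookrightarrow L^{p^*}(\R^N)$. Since the limit $W_1$ is unique, the whole family converges, not just a subsequence. The main obstacle is the verification that the concentration--compactness dichotomy cannot occur and that radiality plus the concentration function normalisation uniquely determines the profile; once these are in place, the uniform convexity argument upgrades weak to strong convergence routinely.
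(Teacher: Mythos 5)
Your proposal is correct and follows essentially the same route as the paper: both treat $(v_\eps)$ as a minimising family for $S_*$ via \eqref{*e2.41}, invoke the concentration--compactness principle of \cite{Struwe}*{Theorem 4.9}, use radiality together with the normalisation \eqref{319} (i.e. $\int_{B_1}|v_\eps|^{p^*}dx=Q_*$) to identify the limit as $W_1$ among the extremisers $W_\lambda$, and pass from subsequences to the whole family by uniqueness of the limit. The only difference is presentational: you track the translation/dilation parameters explicitly and add a Radon--Riesz (uniform convexity) upgrade, whereas the paper obtains strong $D^{1,p}$ and $L^{p^*}$ convergence directly from the concentration--compactness statement applied to $\|v_\eps\|_{p^*}^{-1}v_\eps$, making that last step unnecessary.
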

\begin{proof}
	By \eqref{*e2.41}, for any sequence $\varepsilon_{n}\rightarrow 0 $ there exists a subsequence $(\varepsilon_{\acute{n}}) $ such that $(v_{\varepsilon_{\acute{n}}}) $ converges weakly in $D^{1,p}(\mathbb{R}^{N}) $ to some radial functions $w_{0} \in D^{1,p}(\mathbb{R}^{N}) $. By the Concentration-Compactness Principle \cite{Struwe}*{Theorem $4.9 $} applied to $||v_{\varepsilon} ||_{p^{*}}^{-1} v_{\varepsilon}$, we have in fact that $(v_{\varepsilon_{\acute{n}}}) $ converges to $w_{0} $ strongly in $D^{1,p}(\mathbb{R}^{N}) $ and $L^{p^*}(\mathbb{R}^{N}) $. Hence, $ ||w_{0}||_{p^{*}}=1 $ and therefore $w_{0} $ is a radial minimizer of \eqref{2p}, that is necessarily $w_{0}\in \{W_{\lambda}\}_{\lambda>0} $. Note that it also holds
	$$\int\limits_{B_{1}}|w_{0}(x)|^{p^{*}}dx=Q_{*}.$$
As a consequence $w_{0}=W_{1}$. Since the sequence $(\varepsilon_n)$ was arbitrary, the whole sequence $ (v_{n})$ converges to $W_{1}$ strongly in $D^{1,p} (\mathbb{R}^{N})$ and $L^{p^*}(\mathbb{R}^{N}), $ and this concludes the proof.
\end{proof}

\subsection{Rescaled equation estimates}
Our next step is to obtain upper and lower estimates on the rescaling function $\lambda_\eps$, which is implicitly determined by \eqref{319}.
\noindent The rescaled function $ v_{\varepsilon} $ introduced in \eqref{p55}
is such that
\begin{equation}\tag{$R^{*}_{\varepsilon}$}\label{3321}
-\Delta_{p}v_{\varepsilon} = S_{\varepsilon}\Big(-\varepsilon \lambda_{\varepsilon}^{p}v_{\varepsilon}^{p-1} +v_{\varepsilon}^{p^{*}-1}-\lambda_{\varepsilon}^{-(N-p)(\frac{l-p}{p})+p}v_{\varepsilon}^{l-1}\Big),
\end{equation}
as \eqref{e2.4} is achieved by $w_\varepsilon.$  By construction, for $v_{\varepsilon}$ we obtain
$$	||v_{\varepsilon}||_{l}^{l}=\lambda_{\varepsilon}^{\frac{p(l-p^{*})}{(p^{*}-p)}} ||w_{\varepsilon} ||_{l}^{l},
\qquad
||v_{\varepsilon}||_{p}^{p}=\lambda_{\varepsilon}^{-p} ||w_{\varepsilon} ||_{p}^{p}.
$$
Putting Lemmas \ref{314} and \ref{lem2.3.5} together we then achieve the relation
\begin{equation}\label{eq3.21}
\lambda_{\varepsilon}^{-\frac{p(l-p^{*})}{(p^{*}-p)}}||v_{\varepsilon} ||^{l}_{l} =\lambda_{\varepsilon}^{p}k\varepsilon||v_{\varepsilon}||_{p}^{p}\lesssim\sigma_{\varepsilon},
\end{equation}
which yields the following\\
\begin{lemma}\label{2.3.7} Let $1<p<N $. Then
	$$\sigma_{\varepsilon}^{-\frac{(p^{*}-p)}{p(l-p^{*})}} \lesssim \lambda_{\varepsilon} \lesssim \varepsilon^{-\frac{1}{p}} \sigma_{\varepsilon}^{\frac{1}{p}}.$$
\end{lemma}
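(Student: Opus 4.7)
The plan is to extract both inequalities directly from the two-sided identity \eqref{eq3.21}, namely
\[
\lambda_{\varepsilon}^{-\frac{p(l-p^{*})}{p^{*}-p}}\|v_\varepsilon\|_l^l \;=\; k\,\varepsilon\,\lambda_\varepsilon^p\|v_\varepsilon\|_p^p \;\lesssim\; \sigma_\varepsilon,
\]
once we show that $\|v_\varepsilon\|_p^p$ and $\|v_\varepsilon\|_l^l$ remain bounded away from zero as $\varepsilon\to0$. Nothing further from the PDE is needed; the proof is a matter of extracting a uniform positive mass for the rescaled minimizers on a fixed ball.

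First I would establish the qualitative fact $\|v_\varepsilon\|_p^p \gtrsim 1$. Since $p<p^{*}$, Lemma \ref{2.3.6} provides strong convergence $v_\varepsilon\to W_1$ in $D^{1,p}(\mathbb{R}^N)$ and in $L^{p^{*}}(\mathbb{R}^N)$, so in particular $(v_\varepsilon)$ is bounded in $W^{1,p}(B_R)$ for every $R>0$. By the Rellich--Kondrachov theorem, $v_\varepsilon\to W_1$ in $L^{p}(B_R)$, hence
\[
\|v_\varepsilon\|_p^p \;\geq\; \|v_\varepsilon\|_{L^p(B_1)}^p \;\longrightarrow\; \|W_1\|_{L^p(B_1)}^p \;>\; 0.
\]
Next, for the $L^l$ norm: the explicit formula \eqref{232} gives $W_1(x)\sim |x|^{-(N-p)/(p-1)}$ at infinity, and since $l>p^{*}>\tfrac{N(p-1)}{N-p}$ one checks $W_1\in L^l(\mathbb{R}^N)$. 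Strong $L^{p^{*}}$-convergence yields, along a subsequence, pointwise a.e.\ convergence $v_\varepsilon \to W_1$, so Fatou's lemma gives $\liminf_\varepsilon \|v_\varepsilon\|_l^l \geq \|W_1\|_l^l>0$ on that subsequence; the usual subsequence-of-a-subsequence argument then upgrades this to $\|v_\varepsilon\|_l^l \gtrsim 1$ for all $\varepsilon$ small enough.

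With these two facts in hand, the two bounds are immediate rearrangements of \eqref{eq3.21}. The right-hand estimate of \eqref{eq3.21} gives $\lambda_\varepsilon^p\,\varepsilon\,\|v_\varepsilon\|_p^p\lesssim\sigma_\varepsilon$, which combined with $\|v_\varepsilon\|_p^p\gtrsim1$ yields
\[
\lambda_\varepsilon \;\lesssim\; \varepsilon^{-1/p}\,\sigma_\varepsilon^{1/p}.
\]
The left-hand estimate in \eqref{eq3.21} gives $\lambda_\varepsilon^{-p(l-p^{*})/(p^{*}-p)}\,\|v_\varepsilon\|_l^l\lesssim\sigma_\varepsilon$, and using $\|v_\varepsilon\|_l^l\gtrsim 1$ one obtains
\[
\sigma_\varepsilon^{-(p^{*}-p)/[p(l-p^{*})]} \;\lesssim\; \lambda_\varepsilon.
\]

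The main obstacle is the lower bound $\|v_\varepsilon\|_l^l\gtrsim 1$, because $l>p^{*}$ places us beyond the exponent range where Lemma \ref{2.3.6} supplies convergence; in particular, mass could \emph{a priori} escape to infinity or concentrate away from the origin, and $L^l$-convergence cannot be asserted. The Fatou-plus-a.e.-subsequence argument circumvents this difficulty cleanly, since only a positive lower bound is needed rather than convergence of the $L^l$ norm itself. The fine structure of $v_\varepsilon$ (including the normalisation $\int_{B_1}|v_\varepsilon|^{p^{*}}=Q_*$) is not required at this stage; it is used only later, together with the present Lemma \ref{2.3.7} and the upper bounds on $\sigma_\varepsilon$ from Lemma \ref{3.6}, to derive the matching estimates \eqref{22s} in the three dimensional regimes of Theorem \ref{s123}.
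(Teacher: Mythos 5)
Your proposal is correct and takes essentially the same route as the paper: both arguments deduce the two bounds by rearranging \eqref{eq3.21} once $\|v_\eps\|_p$ and $\|v_\eps\|_l$ are shown to be bounded away from zero as $\eps\to 0$, using the convergence of Lemma \ref{2.3.6}. The only (immaterial) difference is in how those lower bounds are extracted — the paper localizes to $B_1$ and uses H\"older's inequality with the strong $L^{p^*}$ convergence, while you use Rellich--Kondrachov for the $L^p$ bound and a Fatou/a.e.-subsequence argument for the $L^l$ bound; both are valid.
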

\begin{proof}
The statement will follow by \eqref{eq3.21} combined with the observation that
	$$\displaystyle\liminf_{\varepsilon\rightarrow 0}||v_{\varepsilon}||_{l}>0, \quad \displaystyle\liminf_{\varepsilon\rightarrow 0}||v_{\varepsilon}||_{p}>0. $$
	The former is a consequence of Lemma \ref{2.3.6} and H\"older's inequality, which yields $ L^{l}(B_{1}) \subset L^{p^{*}}(B_{1}) $ since $l>p^{*},$  hence
	\begin{eqnarray*}
		c ||v_{\varepsilon} \mathcal{X}_{B_{1}}||_{l}\geq||v_{\varepsilon} \mathcal{X}_{B_{1}}||_{p^{*}}&\geq& ||W_{1}\mathcal{X}_{B_{1}}||_{p^{*}} -||(W_{1}-v_{\varepsilon}) \mathcal{X}_{B_{1}}||_{p^{*}}\\
		&=& ||W_{1}\mathcal{X}_{B_{1}}||_{p^{*}} -o(1).
	\end{eqnarray*}
	\noindent Here $ \mathcal{X}_{B_{R}} $ is the characteristic function of $ B_{R}$. To show the latter,  by the embedding $  L^{p^{*}}(B_{1}) \subset L^{p}(B_{1}) $ since $p^{*}>p  $, we obtain
	$$c ||v_{\varepsilon} \mathcal{X}_{B_{1}}||_{p^{*}}\geq||v_{\varepsilon} \mathcal{X}_{B_{1}} ||_{p}\geq
	||W_{1}\mathcal{X}_{B_{1}}||_{p} -||(W_{1}-v_{\varepsilon}) \mathcal{X}_{B_{1}}||_{p}=||W_{1}\mathcal{X}_{B_{1}}||_{p} -o(1),$$
	and this concludes the proof.
\end{proof}
\noindent
By \eqref{eq3.6} and Lemma \ref{2.3.7} we obtain both an estimate from below
\begin{equation}\label{3.22}
\lambda_{\varepsilon}\gtrsim\sigma_{\varepsilon}^{-\frac{(p^{*}-p)}{p(l-p^{*})}} \gtrsim \begin{cases}
	\varepsilon^{-\frac{(p^{*}-p)}{p(l-p)}} &  \;1<p<\sqrt{N},\\
	\varepsilon^{-\frac{1} {[(l-p^{*})(p-1)+p]}} & \sqrt{N}<p<N,\\
	\Big(\varepsilon (\log \frac{1}{\varepsilon})\Big)^{-\frac{(p^{*}-p)}{p(l-p)}} & p=\sqrt{N},\\
	\end{cases}
\end{equation}
and from above
\begin{equation}\label{3.23}
\lambda_{\varepsilon}\lesssim \begin{cases}
	\varepsilon^{-\frac{p^{*}-p}{p(l-p)}} &  \;1<p<\sqrt{N},\\
	\varepsilon^{-\frac{(p^{2}-N)(l-p^{*})+p^{2}} {p^{2}[(l-p^{*})(p-1)+p]}} & \sqrt{N}<p<N,\\
	\varepsilon^{-\frac{(p^{*}-p)}{p(l-p)}}\Big( \log \frac{1}{\varepsilon}\Big)^{\frac{(l-p^{*})}{p(l-p)}} & p=\sqrt{N}.
	\end{cases}
\end{equation}
We note that in the case $1<p<\sqrt{N}$ the above lower and upper estimates are equivalent, therefore we have the following

\begin{corollary}\label{partialbound}
Let $1<p<\sqrt{N}$. Then $||v_{\varepsilon}||_{l}$ and $||v_{\varepsilon}||_{p}$ are bounded.
\end{corollary}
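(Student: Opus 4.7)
The plan is to read off the result directly from identity \eqref{eq3.21}, combined with the fact that in the regime $1<p<\sqrt{N}$ the lower bound \eqref{3.22} and the upper bound \eqref{3.23} coincide, so that Lemma \ref{2.3.7} upgrades to the two-sided asymptotic $\lambda_\varepsilon \sim \varepsilon^{-(p^*-p)/(p(l-p))}$. The remaining ingredient is the upper bound $\sigma_\varepsilon \lesssim \varepsilon^{(l-p^*)/(l-p)}$ from Lemma \ref{3.6}, also valid in this range of $p$.

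Concretely, I would rearrange \eqref{eq3.21} as the pair of inequalities
\begin{equation*}
\|v_\varepsilon\|_l^l \;\lesssim\; \lambda_\varepsilon^{\,p(l-p^*)/(p^*-p)}\,\sigma_\varepsilon,\qquad
\|v_\varepsilon\|_p^p \;\lesssim\; \lambda_\varepsilon^{-p}\,\varepsilon^{-1}\,\sigma_\varepsilon,
\end{equation*}
and then substitute the asymptotics of $\lambda_\varepsilon$ and $\sigma_\varepsilon$ above. For the $L^l$ bound the exponent of $\varepsilon$ that results is
\begin{equation*}
-\frac{p^*-p}{p(l-p)}\cdot\frac{p(l-p^*)}{p^*-p}+\frac{l-p^*}{l-p}=0,
\end{equation*}
while for the $L^p$ bound it is $\frac{p^*-p}{l-p}+\frac{l-p^*}{l-p}-1 = 0$. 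Both exponents cancel cleanly, so both norms are $O(1)$ as $\varepsilon\to 0$.

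There is no real obstacle here: the work has already been done in Lemmas \ref{314}, \ref{lem2.3.5}, \ref{3.6} and \ref{2.3.7}. The corollary is simply the observation that the subcritical-dimension regime $1<p<\sqrt{N}$ is exactly the one in which the exponents in the Pohozaev-type identity \eqref{eq3.21} balance against the optimal expansion of $\sigma_\varepsilon$ to give order-one quantities; in higher dimensions the mismatch between \eqref{3.22} and \eqref{3.23} prevents such a clean conclusion.
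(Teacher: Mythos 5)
Your argument is correct and is exactly the paper's: the authors prove this corollary as an ``immediate consequence of \eqref{eq3.21}--\eqref{3.23}'', i.e.\ by combining the identity \eqref{eq3.21} with the bound on $\sigma_\varepsilon$ from Lemma \ref{3.6} and the matching two-sided bounds \eqref{3.22}--\eqref{3.23} on $\lambda_\varepsilon$ valid when $1<p<\sqrt{N}$, precisely as you do (using the upper bound on $\lambda_\varepsilon$ for the $L^l$ norm and the lower bound for the $L^p$ norm). Your exponent computations check out, so nothing is missing.
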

\begin{proof}
This is an immediate consequence of \eqref{eq3.21}-\eqref{3.23}.
\end{proof}
\noindent
In the case $p\geq \sqrt N$ we take into account the growth of $||v_{\varepsilon}||_{p}$ to obtain matching bounds. In this case instead of \eqref{3.23} we use the more explicit upper bound
\begin{equation}\label{3.25}
\lambda_{\varepsilon}\lesssim \frac{\varepsilon^{-1/p}\sigma^{1/p}_{\varepsilon}}{||v_{\varepsilon}||_{p}}\lesssim ||v_{\varepsilon}||^{-1}_{p} \begin{cases}
	\varepsilon^{-\frac{(p^{2}-N)(l-p^{*})+p^{2}} {p^{2}[(l-p^{*})(p-1)+p]}} & \sqrt{N}<p<N.\\
	\varepsilon^{-\frac{(p^{*}-p)}{p(l-p)}}\Big( \log \frac{1}{\varepsilon}\Big)^{\frac{(l-p^{*})}{p(l-p)}}& p=\sqrt{N},\\
	\end{cases}
\end{equation}
which follows from \eqref{eq3.21}  and \eqref{eq3.6}.

\subsection{A lower barrier for $p\ge 2$}

To refine the upper bound \eqref{3.23} we shall construct a lower barrier for $w_\eps$ in the critical regim\'es $\sqrt{N}\le p<N$.
For $p\ge 2$ this will be done using the following uniform estimate.

\begin{lemma}\label{p93}
Given $\mu>0$ and $\gamma>0$, set
$$h(r):=r^{-\gamma}e^{-\mu r}.$$
Assume that $p\ge 2$ and that $N-1-2\gamma(p-1)\le 0$ and $\gamma(N-p-\gamma(p-1))\le 0$.
Then for all $\mu>0$ and $r>0$,
\begin{multline}\label{eeebbb}
-\Delta_p h+\mu^p(p-1)h^{p-1}\\
\leq\mu\frac{\gamma^{p-2}(N-1-2\gamma(p-1))}{r^{p-1}}h^{p-1}+\frac{\gamma^{p-1}(N-p-\gamma(p-1))}{r^p}h^{p-1}.
\end{multline}
\end{lemma}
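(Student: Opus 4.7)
The plan is to compute $-\Delta_p h$ explicitly for the radial profile $h(r) = r^{-\gamma}e^{-\mu r}$ and then reorganise the resulting expression so that the claimed inequality reduces to a sign check. First I would observe that $h'(r) = -(\gamma/r + \mu)h(r)$; introducing the shorthand $a := \gamma/r$, one has $|h'|^{p-2}h' = -(a+\mu)^{p-1}h^{p-1}$. Substituting into the radial formula $\Delta_p h = r^{1-N}(r^{N-1}|h'|^{p-2}h')'$, differentiating the product, and using $h' = -(a+\mu)h$ again to handle the derivative of $h^{p-1}$, I obtain
\begin{equation*}
\frac{-\Delta_p h}{h^{p-1}} = \frac{N-1}{r}(a+\mu)^{p-1} - (p-1)\frac{a}{r}(a+\mu)^{p-2} - (p-1)(a+\mu)^p.
\end{equation*}

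Next I would factor $(a+\mu)^{p-2}$ out of all three terms, using $(a+\mu)^{p-1} = (a+\mu)(a+\mu)^{p-2}$ and $(a+\mu)^p = (a+\mu)^2(a+\mu)^{p-2}$, expand $(a+\mu)^2 = a^2 + 2a\mu + \mu^2$, and substitute $a = \gamma/r$ so that $a/r = \gamma/r^2$ and $a^2 = \gamma^2/r^2$. After collecting coefficients of $1/r^2$, $1/r$ and terms constant in $r$ inside the bracket, this reorganisation yields
\begin{equation*}
\frac{-\Delta_p h}{h^{p-1}} = (a+\mu)^{p-2}\left[\frac{\gamma(N-p-(p-1)\gamma)}{r^2} + \frac{\mu(N-1-2(p-1)\gamma)}{r} - (p-1)\mu^2\right].
\end{equation*}

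Finally, adding $(p-1)\mu^p$ I would split off the pure $\mu$-contribution as $(p-1)\mu^2[\mu^{p-2} - (a+\mu)^{p-2}]$, which is $\le 0$ since $p\ge 2$ and $a>0$. For the remaining two terms both coefficients $\gamma(N-p-(p-1)\gamma)$ and $\mu(N-1-2(p-1)\gamma)$ are non-positive by the hypotheses, while $p\ge 2$ gives $(a+\mu)^{p-2}\ge a^{p-2} = \gamma^{p-2}/r^{p-2}$; replacing the larger factor $(a+\mu)^{p-2}$ by the smaller $a^{p-2}$ in a product with a non-positive bracket produces an upper bound, and the two resulting terms are precisely $\mu\gamma^{p-2}(N-1-2\gamma(p-1))/r^{p-1}$ and $\gamma^{p-1}(N-p-\gamma(p-1))/r^p$. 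The main technical step is the algebraic factorisation displayed above; the sign analysis is then immediate, and both hypotheses on $\gamma$ are used in an essential way to ensure the monotonicity bounds on $(a+\mu)^{p-2}$ go in the correct direction.
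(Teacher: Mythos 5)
Your proposal is correct and follows essentially the same route as the paper: the explicit radial computation yields exactly the decomposition the paper states (the $(p-1)\mu^2\{\mu^{p-2}-(\mu+\gamma/r)^{p-2}\}$ term plus the $(\mu+\gamma/r)^{p-2}$ times the two-term bracket), and your sign analysis — discarding the first term for $p\ge 2$ and bounding $(\mu+\gamma/r)^{p-2}$ below by $(\gamma/r)^{p-2}$ against the non-positive bracket — is precisely the paper's argument. The only difference is that you write out the "direct calculations" the paper leaves implicit.
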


\begin{remark}
If $p=2$ then \eqref{eeebbb} becomes an equality.
\end{remark}

\begin{proof}
By direct calculations, we have
\begin{multline*}
-\Delta_p h+\mu^p(p-1)h^{p-1}=  (p-1)\mu^2\left\{\mu^{p-2}-\Big(\mu+\frac{\gamma}{r}\Big)^{p-2}\right\}h^{p-1}\\
+ \Big(\mu+\frac{\gamma}{r}\Big)^{p-2}
\left\{\mu\frac{N-1-2\gamma(p-1)}{r}+\frac{\gamma(N-p-\gamma(p-1))}{r^2}\right\}h^{p-1}.
\end{multline*}

\noindent
For all $\mu>0$ and $r>0$, by monotonicity we have
$$\left\{\mu^{p-2}-\Big(\mu+\frac{\gamma}{r}\Big)^{p-2}\right\}\le 0,$$
$$\Big(\mu+\frac{\gamma}{r}\Big)^{p-2}\ge \Big(\frac{\gamma}{r}\Big)^{p-2}.$$
Therefore, assuming that $N-1-2\gamma(p-1)\le 0$ and $\gamma(N-p-\gamma(p-1))\le 0$ we can estimate,
\begin{multline*}
-\Delta_p h+\mu^p(p-1)h^{p-1}\le \Big(\frac{\gamma}{r}\Big)^{p-2}\left\{\mu\frac{N-1-2\gamma(p-1)}{r}+\frac{\gamma(N-p-\gamma(p-1))}{r^2}\right\}h^{p-1}\\
\le \mu\frac{\gamma^{p-2}(N-1-2\gamma(p-1))}{r^{p-1}}h^{p-1}+\frac{\gamma^{p-1}(N-p-\gamma(p-1))}{r^p}h^{p-1},
\end{multline*}
uniformly for all $\mu>0$ and $r>0$.
\end{proof}

\begin{remark}
In the case $1<p<2$ by monotonicity, convexity and Taylor for all $\mu>0$ and $r>0$ we have
$$0\le\left\{\mu^{p-2}-\Big(\mu+\frac{\gamma}{r}\Big)^{p-2}\right\}\le (2-p)\mu^{p-3}\frac{\gamma}{r}.$$
Similarly, we can estimate
\begin{equation}\label{est-mu}
\Big(\mu+\frac{\gamma}{r}\Big)^{p-2}\ge \mu^{p-2}-(2-p)\mu^{p-3}\frac{\gamma}{r},
\end{equation}
or, alternatively,
\begin{equation}\label{est-r}
\Big(\mu+\frac{\gamma}{r}\Big)^{p-2}\ge \Big(\frac{\gamma}{r}\Big)^{p-2}-(2-p)\Big(\frac{\gamma}{r}\Big)^{p-3}\mu.
\end{equation}
Therefore, assuming that $N-1-2\gamma(p-1)\le 0$ and $\gamma(N-p-\gamma(p-1))\le 0$ we can estimate,
\begin{multline}
-\Delta_p h+\mu^p(p-1)h^{p-1}\le  \mu^{p-1}\frac{(2-p)(p-1)\gamma}{r}h^{p-1}\\
+ \Big(\quad\text{\eqref{est-mu} or \eqref{est-r}}\quad\Big)
\left\{\mu\frac{N-1-2\gamma(p-1)}{r}+\frac{\gamma(N-p-\gamma(p-1))}{r^2}\right\}h^{p-1}.\\
\label{wrong-est}
\end{multline}
Both \eqref{est-mu} and \eqref{est-r} introduce a large positive term in \eqref{wrong-est} which we cannot control.
\end{remark}
\noindent To estimate the norm $ ||v_{\varepsilon}||_{p}$, we note that
$$ -\Delta_{p}v_{\varepsilon}+S_{\varepsilon}\varepsilon \lambda_{\varepsilon}^{p}|v_{\varepsilon}|^{p-1}= S_{\varepsilon}  |v_{\varepsilon}|^{p^{*}-1} -  S_{\varepsilon}  \lambda_{\varepsilon}^{-(N-p)\frac{l-p}{p}+p}|v_{\varepsilon}|^{l-1}\geq -V_{\varepsilon}(x)v^{p-1}_{\varepsilon},$$
where we have set
$$ V_{\varepsilon}(x):= S_{\varepsilon} \lambda_{\varepsilon}^{-(N-p)\frac{l-p}{p}+p} v_{\varepsilon}^{l-p}(x).$$
By the radial decay estimate \eqref{p129}
we have $$ v_{\varepsilon}(x)\leq C_{N,p^{*}} |x|^{-N/p^{*}}||v_{\varepsilon}||_{p^{*}} .$$
By \eqref{*e2.41} and since $ \lambda_{\varepsilon}^{{-\frac{p(l-p^{*})}{(p^{*}-p)}} } \lesssim \sigma_{\varepsilon}\rightarrow 0  $  Lemmas \ref{3.6} and \ref{2.3.7} yield, for sufficiently small $ \varepsilon>0,$ the following decay estimate
\begin{eqnarray*}
V_{\varepsilon}(x):= S_{\varepsilon} \lambda_{\varepsilon}^{-(N-p)\frac{l-p}{p}+p} v_{\varepsilon}^{l-p}(x)
&\leq& S_{\varepsilon} \lambda_{\varepsilon}^{-(N-p)\frac{l-p}{p}+p} c^{l-p}_{p^{*}}||v_{\varepsilon}||^{l-p}_{p^{*}}|x|^{-\frac{N}{p^{*}}(l-p)}\\
&\leq& C\lambda_{\varepsilon}^{-(N-p)\frac{l-p}{p}+p} |x|^{-(p+\delta)} ,
\end{eqnarray*}
where $\delta:=\frac{N-p}{p}(l-p)-p>0$ and the constant $ C>0 $ does not depend on $ \varepsilon $ or $ x $.
Hence, for small $ \varepsilon>0 $ the rescaled functions $ v_{\varepsilon}>0 $ satisfy the homogeneous inequality
\begin{equation}\label{3.15*}
-\Delta_{p}v_{\varepsilon}+S_\varepsilon \varepsilon \lambda_{\varepsilon}^{p} v_{\varepsilon}^{p-1}+V_{\eps}(x)v^{p-1}_{\varepsilon}\geq 0, \quad x\in \mathbb{R^{N}}.
\end{equation}
The following result provides a suitable lower barrier to \eqref{3.15*} below.

\begin{lemma}\label{3.1.9}
Assume $N\ge 4$ and $2\le p<\frac{N+1}{2}$. Then there exists $R>0$, independent on $\eps>0$,
such that for all small $\eps> 0$,
$$h_{\varepsilon}(x):=|x|^{-\frac{N-p}{p-1}} e^{-\sqrt[p]{\varepsilon S_{\varepsilon}}\lambda_{\varepsilon}|x|} $$
satisfies
\begin{equation}\label{3.15*}
-\Delta_{p}h_{\varepsilon}+(p-1)S_\varepsilon \varepsilon \lambda_{\varepsilon}^{p} h_{\varepsilon}^{p-1}+V_{\varepsilon}(x)h_\eps^{p-1}\leq 0, \quad |x|>R.
\end{equation}
\end{lemma}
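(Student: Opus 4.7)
\textbf{Proof plan for Lemma \ref{3.1.9}.} The barrier $h_\eps$ is exactly of the form $h(r) = r^{-\gamma}e^{-\mu r}$ considered in the preceding lemma, with the specific choices
\begin{equation*}
\gamma := \frac{N-p}{p-1}, \qquad \mu := \sqrt[p]{\eps S_\eps}\,\lambda_\eps.
\end{equation*}
The first step is to verify the hypotheses of that lemma. Since $\gamma(p-1) = N-p$, one has $N-p-\gamma(p-1) = 0$ identically, and $N-1-2\gamma(p-1) = 2p-N-1 \le 0$ precisely because $p \le (N+1)/2$; moreover $p \ge 2$ by assumption. Applying that lemma, the second term on its right hand side vanishes and we obtain the pointwise inequality
\begin{equation*}
-\Delta_p h_\eps + (p-1)\eps S_\eps \lambda_\eps^p \, h_\eps^{p-1} \le \sqrt[p]{\eps S_\eps}\,\lambda_\eps\, \frac{\gamma^{p-2}(2p-N-1)}{r^{p-1}}\, h_\eps^{p-1}
\end{equation*}
on $\{r>0\}$, with the crucial feature that the coefficient $\gamma^{p-2}(2p-N-1)$ is \emph{strictly negative}, because $p < (N+1)/2$.

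Adding $V_\eps(x) h_\eps^{p-1}$ to both sides, the sought estimate \eqref{3.15*} reduces to verifying that the bracket
\begin{equation*}
V_\eps(x) - \sqrt[p]{\eps S_\eps}\,\lambda_\eps\, \frac{\gamma^{p-2}(N+1-2p)}{|x|^{p-1}}
\end{equation*}
is non-positive for $|x|>R$. Using the explicit upper bound $V_\eps(x) \le C \lambda_\eps^{-\delta} |x|^{-(p+\delta)}$ with $\delta = (N-p)(l-p)/p - p > 0$ established just before the lemma, this amounts to
\begin{equation*}
|x|^{1+\delta} \ge C'\,\lambda_\eps^{-(1+\delta)}\,\eps^{-1/p}
\end{equation*}
for a constant $C'$ independent of $\eps$ and $x$.

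The remaining step, and the main technical point, is to check that the right hand side above is bounded \emph{uniformly in $\eps$} as $\eps \to 0$, so that a single $R$ works. I would invoke the lower bound \eqref{22s}, which in the range $\sqrt{N}\le p<(N+1)/2$ reads $\lambda_\eps \gtrsim \eps^{-\beta}$ with $\beta = 1/[(l-p^*)(p-1)+p]$ (modulo a harmless logarithm when $p=\sqrt{N}$), and verify the key algebraic identity
\begin{equation*}
p\delta - (l-p^*)(p-1) = (N+1-2p)(l-p^*),
\end{equation*}
which is positive precisely under the hypothesis $p<(N+1)/2$. Rearranging, this gives $\beta(1+\delta) > 1/p$, so that $\lambda_\eps^{1+\delta}\eps^{1/p} \to \infty$ as $\eps \to 0$ and hence $\lambda_\eps^{-(1+\delta)}\eps^{-1/p} \to 0$. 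Any fixed $R>0$ therefore satisfies the required inequality for all sufficiently small $\eps>0$, completing the proof.

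\textbf{Main obstacle.} The only delicate point is the precise algebra that identifies the threshold $p<(N+1)/2$: it appears twice, first ensuring the sign $(2p-N-1)<0$ of the leading error term in the preceding lemma, and second ensuring $p\delta > (l-p^*)(p-1)$ so that the absorbing inequality holds uniformly in $\eps$. Both occurrences trace back to the same quantity $N+1-2p$, and it is precisely this coincidence that makes $p<(N+1)/2$ the sharp range to which the method applies, consistently with the statement of Theorem \ref{main-cor}.
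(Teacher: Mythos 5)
Your proposal is correct and follows essentially the same route as the paper: apply Lemma \ref{p93} with $\gamma=\frac{N-p}{p-1}$ (so the second error term vanishes and the first has the strictly negative coefficient $\gamma^{p-2}(2p-N-1)$), absorb $V_\eps(x)h_\eps^{p-1}$ via the decay bound $V_\eps(x)\le C\lambda_\eps^{-\delta}|x|^{-(p+\delta)}$, and conclude using the lower bound \eqref{3.22} on $\lambda_\eps$; your identity $p\delta-(l-p^*)(p-1)=(N+1-2p)(l-p^*)$ is exactly the exponent comparison $\frac{(N-p)(l-p)-p^2}{p[s(p-1)+p]}>\frac{s(p-1)}{p[s(p-1)+p]}$ that the paper uses, and your remark about the harmless logarithm at $p=\sqrt{N}$ is a point the paper glosses over. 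The only cosmetic difference is that the paper bounds $r^{-(p+\delta)}\le r^{-(p-1)}$ for $r>R>1$ and makes the $\eps$-dependent bracket nonpositive, whereas you keep the extra $r$-decay and note that the required threshold radius tends to zero, so a fixed $R$ works for all small $\eps$.
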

\begin{proof}
By Lemma \ref{p93} with $\gamma=\frac{N-p}{p-1}$ we conclude that there exists $R>1$, independent of $\eps>0$, such that
\begin{multline*}
-\Delta_p h_{\varepsilon}+(p-1)S_\varepsilon \eps\lambda_\eps^p h_{\varepsilon}^{p-1}+
V_\eps(x)h_{\varepsilon}^{p-1}\\
\leq
(\varepsilon S_\varepsilon)^{\frac{1}{p}}\lambda_{\varepsilon}\frac{\gamma^{p-2}(N-1-2\gamma(p-1))}{r^{p-1}}h_{\varepsilon}^{p-1}+\lambda_\eps^{-\frac{N-p}{p}(l-p)+p}\frac{C}{r^{p+\delta}}h_{\varepsilon}^{p-1}\\
\leq \left\{-\gamma^{p-2}\big(N+1-2p\big)(\varepsilon S_\varepsilon)^{\frac{1}{p}}\lambda_{\varepsilon}+C\lambda_\eps^{-\frac{N-p}{p}(l-p)+p}\right\}
\frac{1}{r^{p-1}}h_{\varepsilon}^{p-1}\quad\text{for $|x|>R$}.
\end{multline*}
\noindent It is convenient to denote $s:=l-p^*>0$.
Taking into account that $-\frac{N-p}{p}(l-p)+p=s(1-N/p)<0$,
we can use the lower bound \eqref{3.22} on $\lambda_\varepsilon$ to estimate
\begin{multline*}
	-\gamma^{p-2}\big(N+1-2p\big)(\varepsilon S_\varepsilon)^{\frac{1}{p}}\lambda_{\varepsilon}+C\lambda_\eps^{-\frac{N-p}{p}(l-p)+p}\\
	\le-\gamma^{p-2}(N+1-2p)S_\varepsilon^{\frac{1}{p}}\varepsilon^{\frac{1}{p}-\frac{1} {[s(p-1)+p]}} +C\varepsilon^\frac{(N-p)(l-p)-p^2} {p[s(p-1)+p]}\\
	\le-\gamma^{p-2}(N+1-2p)S_\varepsilon^{\frac{1}{p}}\varepsilon^{\frac{s(p-1)} {p[s(p-1)+p]}} +C\varepsilon^\frac{(N-p)(l-p)-p^2} {p[s(p-1)+p]}
	\le 0,
	\end{multline*}
for all sufficiently small $\eps>0$, provided that $p<(N+1)/2$,
which completes the proof.
\end{proof}

\begin{lemma}\label{3.1.10}
Assume $N\ge 4$ and $2\le p<\frac{N+1}{2}$.
There exists $ R> 0 $ and $ c>0 $, independent on $\eps>0$, such that for all small $ \varepsilon >0$, 
$$ v_{\varepsilon}(x)\geq c|x|^{-\frac{N-p}{p-1}} e^{-\sqrt[p]{\varepsilon S_\varepsilon} \lambda_{\varepsilon} |x|} \quad (|x|>R).$$
\end{lemma}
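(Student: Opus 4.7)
The plan is to use Lemma~\ref{3.1.9} as a sub-barrier and the inequality \eqref{3.15*} as a super-barrier, and then apply a weak comparison principle on the exterior domain $\{|x|>R\}$ to compare $v_\eps$ from below with a fixed multiple of $h_\eps$.

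First, I would observe that the hypothesis $p\geq 2$ forces $p-1\geq 1$, so the factor $(p-1)$ in front of the zero-order term in Lemma~\ref{3.1.9} can be absorbed. Setting
$$L_\eps u := -\Delta_p u + S_\eps \eps \lambda_\eps^p u^{p-1} + V_\eps(x)\, u^{p-1},$$
Lemma~\ref{3.1.9} then reads $L_\eps(h_\eps)\leq 0$ on $\{|x|>R\}$, while \eqref{3.15*} reads $L_\eps(v_\eps)\geq 0$ on $\R^N$. Since $L_\eps$ is $(p-1)$-homogeneous, $c\,h_\eps$ remains a subsolution of $L_\eps u=0$ for every constant $c>0$, and only an $\eps$-uniform boundary comparison on $\partial B_R$ is needed to fix an admissible $c$.

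Second, the $\eps$-uniform boundary comparison would follow from $v_\eps\to W_1$ in $C^1_{\mathrm{loc}}(\R^N)$, which is a consequence of Lemma~\ref{2.3.6} combined with DiBenedetto's $C^{1,\alpha}$-regularity applied to \eqref{3321}; the right-hand side of \eqref{3321} is uniformly bounded on compact sets thanks to the truncation \eqref{p21}. Since $\sqrt[p]{\eps S_\eps}\lambda_\eps \to 0$ by Lemmas~\ref{3.6}--\ref{2.3.7}, the barrier $h_\eps(x)$ on $\partial B_R$ is controlled from above by $R^{-(N-p)/(p-1)}$ independently of $\eps$, so a choice of the form $c:=\frac{1}{2}W_1(R)\,R^{(N-p)/(p-1)}$ yields $c\,h_\eps\leq v_\eps$ on $\partial B_R$ for all small $\eps$, with $c>0$ independent of $\eps$.

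Third, I would invoke a weak comparison principle on the exterior domain in the spirit of Theorem~\ref{A52} and Lemma~\ref{A55}. The function $(c\,h_\eps-v_\eps)^+$ is admissible as a test function because both $v_\eps$ (by Ni's inequality \eqref{p129}, with the uniform bound $\|\nabla v_\eps\|_p\to S_*^{1/p}$ from \eqref{*e2.41}) and $c\,h_\eps$ decay to zero at infinity, and because the zero-order coefficients $S_\eps\eps\lambda_\eps^p$ and $V_\eps$ are non-negative. Subtracting the two inequalities and testing against $(c\,h_\eps-v_\eps)^+$ combines the standard monotonicity identity for $|\xi|^{p-2}\xi$ with the sign of the zero-order part to force $(c\,h_\eps-v_\eps)^+\equiv 0$ on $\{|x|>R\}$, which is exactly the asserted bound. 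The main obstacle I anticipate is not the construction of the barrier itself (already done in Lemma~\ref{3.1.9}) but the verification of condition $(S)$ from the Appendix with the correct $\eps$-uniformity, which is why \eqref{*e2.41} and the radial decay \eqref{p129} are essential inputs.
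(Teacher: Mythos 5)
Your overall strategy is the paper's: use the barrier $h_\eps$ from Lemma \ref{3.1.9}, match $v_\eps$ and $c\,h_\eps$ on the sphere $|x|=R$ using the convergence $v_\eps\to W_1$, and conclude by a comparison principle together with the $(p-1)$-homogeneity of the operator. However, two steps of your write-up are not sound as stated. First, the $\eps$-uniform boundary comparison: the right-hand side of \eqref{3321} is \emph{not} uniformly bounded on compact sets "thanks to the truncation \eqref{p21}". The truncation bounds $\tilde f_\eps(w_\eps)$, but after the rescaling $v_\eps(x)=\lambda_\eps^{(N-p)/p}w_\eps(\lambda_\eps x)$ the right-hand side of \eqref{3321} carries a positive power of $\lambda_\eps\to\infty$; a uniform local $L^\infty$ bound on $v_\eps$ is only established later (Lemma \ref{13120}), and for $\sqrt{N}\le p<\frac{N+1}{2}$ that bound sits downstream of the present lemma (through Lemmas \ref{3.13}, \ref{3.15} and Corollaries \ref{31015}--\ref{31017}, \ref{3118}), so invoking uniform DiBenedetto estimates here is circular. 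What is actually needed is much weaker and is what the paper uses: uniform convergence of $v_\eps$ to $W_1$ on the annulus $B_R\setminus B_{R/2}$, which follows from the $D^{1,p}$ convergence of Lemma \ref{2.3.6} combined with the radial compactness results (Lemma \ref{1.14}, Lemma \ref{41}), with no regularity theory at all.

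Second, your final step applies the comparison principle directly on the unbounded exterior domain and you explicitly leave the verification of condition $(S)$ as an anticipated obstacle; as written this is a genuine gap, not a technicality you can cite away. Lemma \ref{A55} does not apply to the subsolution $c\,h_\eps$, since $h_\eps\notin D^{1,p}(\mathbb{R}^N)$ because of the singularity at the origin; condition $(S)$ would have to be checked by hand on $\{|x|>R\}$ (it does hold, thanks to the exponential decay of $h_\eps$, but it is not automatic from the lemmas you cite), and the "$\eps$-uniformity" of $(S)$ you worry about is a red herring: the comparison is run for each fixed $\eps$, and only $R$ and $c$ need to be uniform. The paper sidesteps $(S)$ entirely by the truncation trick: it replaces $c\,h_\eps$ by $c\,h_{\eps,k}:=c\,h_\eps-k^{-1}$, which vanishes on some sphere $|x|=R_k>R$, applies Theorem \ref{A52} on the bounded annulus $R<|x|<R_k$ (where condition $(S)$ is trivially satisfied) and then lets $k\to\infty$, using $R_k\to\infty$. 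If you either carry out the $(S)$ verification for $h_\eps$ on the exterior domain or adopt this truncation-to-bounded-annuli argument, and replace the regularity-based boundary matching by the radial lemma, your proof closes and coincides with the paper's.
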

\begin{proof}
Define the barrier
$$ h_{\varepsilon}(x):= |x|^{-\frac{N-p}{p-1}} e^{-\sqrt[p]{\varepsilon S_\varepsilon } \lambda_{\varepsilon}|x|},$$
which satisfies
\begin{equation}\label{3.15*}
-\Delta_{p}h_{\varepsilon}+\varepsilon S_{\varepsilon}\lambda_{\varepsilon}^{p} h_{\varepsilon}^{p-1}+V_{\varepsilon}(x)h_\eps^{p-1}\leq 0, \quad |x|>R.
\end{equation}
by Lemma \ref{3.1.9}.
\noindent Note that Lemma \ref{2.3.6} and Lemma \ref{1.14} in the Appendix imply
$$ ||(v_{\varepsilon}-W_{1})_{B_{R}\backslash B_{R/2}}||_{\infty}\rightarrow 0, $$
and hence
$$ v_{\varepsilon} (|x|) \rightarrow W_{1}(|x|), \qquad \text{for} \; |x|=R.$$
Hence for all sufficiently small $ \varepsilon >0, $
we have
$$ v_{\varepsilon}(R) \geq \frac{1}{2} W_{1}(R), \quad \text{for} \; |x|=R. $$
Since $ h_{\varepsilon}(R) $ is a monotone decreasing function in $ \varepsilon $, then by a suitable choice of a uniform small constant $ c>0 $ we obtain
$$   c h_{\varepsilon}(R)\leq \frac{1}{2}W_{1}(R),$$
and hence
$$v_{\varepsilon}(R) \geq c h_{\varepsilon}(R), \quad \text{for \; all \;small}\; \varepsilon> 0.$$
Then the homogeneity of \eqref{3.15*} implies
$$ -\Delta_{p}(ch_{\varepsilon})+\varepsilon S_\varepsilon \lambda_{\varepsilon}^{p} (ch_{\varepsilon})^{p-1}+V_{\varepsilon}(x)(ch_{\varepsilon})^{p-1}\leq 0, \quad \text{in} \; |x|>R,$$
for all small $ \varepsilon > 0 $.
Define a function $ ch_{\varepsilon,k} $ by
$$ ch_{\varepsilon,k}=ch_{\varepsilon}-k^{-1} < ch_{\varepsilon},\qquad \text{for\; all} \; k>0, $$
then
\begin{equation}\label{3.18}
-\Delta_{p}(ch_{\varepsilon,k})+V_{\varepsilon}(x)(ch_{\varepsilon,k})^{p-1}+\varepsilon S_\varepsilon \lambda_{\varepsilon}^{p} (ch_{\varepsilon,k})^{p-1}\leq 0,\quad \text{in} \; |x|>R
\end{equation}
and
$$v_{\varepsilon} \geq ch_{\varepsilon} > ch_{\varepsilon,k}, \qquad \text{for} \; |x|=R. $$
Now, since
$$ ch_{\varepsilon}\rightarrow 0, \qquad \text{as} \; |x|\rightarrow +\infty, $$
then for $ k $ large enough there exists $ R_{k} > R $ such that
$$  ch_{\varepsilon,k} =0, \qquad \text{for} \; |x|=R_{k}, $$
and since $ v_{\varepsilon} > 0$, then $$v_{\varepsilon} > ch_{\varepsilon,k}, \qquad \text{for}\; |x|=R_{k}.$$
As a consequence, from \eqref{3.15*} and \eqref{3.18}, using the comparison principle (see Theorem \ref{A52} in the Appendix) we obtain
$$v_{\varepsilon} \geq ch_{\varepsilon,k}, \qquad \text{for}\; R <|x|< R_{k},$$
which can be achieved for every $ k $. 
Since $ R_{k} \rightarrow \infty$ as $k\rightarrow \infty$, the assertion follows.
\end{proof}

\subsection{Critical dimensions $N\ge 4$ and $ \sqrt{N}\leq p<\frac{N+1}{2}$ completed}
We now apply Lemma \ref{3.1.10} to obtain matching estimates for the blow-up of $ ||v_{\varepsilon} ||_{p}$ in dimensions $N\ge 4$ and $ \sqrt{N}\leq p<\frac{N+1}{2}$ .
\begin{lemma}\label{3.13}
If $N\ge 4$ and $ \sqrt{N}< p<\frac{N+1}{2}$, then $||v_{\varepsilon}||_{p}^{p}\gtrsim \Big(\frac{1}{\sqrt[p]{\varepsilon } \lambda_{\varepsilon}}\Big)^{\frac{p^{2}-N}{p-1}}$.
\end{lemma}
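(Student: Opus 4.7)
The plan is to derive the desired $L^p$ lower bound by integrating the pointwise barrier supplied by Lemma \ref{3.1.10}. Concretely, for all sufficiently small $\eps>0$ one has
\[
\|v_\eps\|_p^p \;\geq\; \int_{|x|>R} v_\eps^p \,dx \;\geq\; c^p \int_{|x|>R} |x|^{-\frac{p(N-p)}{p-1}}\, e^{-p(\eps S_\eps)^{1/p}\lambda_\eps |x|}\,dx,
\]
with $R>0$ and $c>0$ independent of $\eps$. Switching to spherical coordinates produces a one-dimensional integral of the form $\int_R^\infty r^{N-1-A} e^{-\mu r}\,dr$, where $A:=\tfrac{p(N-p)}{p-1}$ and $\mu:=p(\eps S_\eps)^{1/p}\lambda_\eps$.

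Next I would rescale by $t=\mu r$, which converts the integral into
\[
\mu^{-(N-A)} \int_{\mu R}^\infty t^{N-1-A} e^{-t}\,dt,
\]
and compute $N-A=\tfrac{p^2-N}{p-1}$. The crucial algebraic observation is that $N-1-A>-1$ is equivalent to $p^2>N$, i.e.\ to our hypothesis $p>\sqrt{N}$; in particular, $t^{N-1-A}e^{-t}$ is integrable at $0$ and the full integral $\int_0^\infty t^{N-1-A}e^{-t}\,dt=\Gamma\!\left(\tfrac{p^2-N}{p-1}\right)$ is a finite positive constant.

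It remains to justify that the truncated integral from $\mu R$ is bounded below by (a fixed fraction of) this Gamma value. For this I only need that $\mu R\to 0$: since $S_\eps\to S_*$ and, by Lemma \ref{2.3.7} together with Lemma \ref{3.6}, $(\eps)^{1/p}\lambda_\eps\lesssim \sigma_\eps^{1/p}\to 0$ (or equivalently \eqref{3.25}), we have $\mu R\to 0$ and hence
\[
\int_{\mu R}^\infty t^{N-1-A}e^{-t}\,dt \;\longrightarrow\; \Gamma\!\left(\tfrac{p^2-N}{p-1}\right)>0.
\]
Combining, $\|v_\eps\|_p^p\gtrsim \mu^{-\frac{p^2-N}{p-1}}\sim \big(\sqrt[p]{\eps}\,\lambda_\eps\big)^{-\frac{p^2-N}{p-1}}$, which is the claim.

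I do not anticipate a real obstacle here: all the hard work was done in Lemma \ref{3.1.9} and Lemma \ref{3.1.10}, where the barrier was constructed and justified in the range $N\ge 4$, $2\le p<\tfrac{N+1}{2}$; the present statement is essentially a one-line consequence of that barrier, and the only analytic point to keep in mind is that the exponent $N-1-A$ lies in $(-1,\infty)$ precisely thanks to $p>\sqrt{N}$, guaranteeing the Gamma integral is finite and positive.
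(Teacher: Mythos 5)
Your proposal is correct and follows essentially the same route as the paper: both integrate the pointwise lower barrier of Lemma \ref{3.1.10} over the exterior region $|x|>R$ and exploit that $\sqrt[p]{\varepsilon}\,\lambda_{\varepsilon}\to 0$ (via Lemmas \ref{3.6} and \ref{2.3.7}) together with $p>\sqrt{N}$. The only difference is computational: the paper truncates the radial integral at $r=\big(\sqrt[p]{\varepsilon S_{\varepsilon}}\,\lambda_{\varepsilon}\big)^{-1}$, where the exponential is of order one, and estimates the resulting power integral, whereas you rescale $t=\mu r$ and identify the limit with the (finite, positive) value $\Gamma\!\big(\tfrac{p^{2}-N}{p-1}\big)$ — both yield the same bound.
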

\begin{proof}
Since $ \sqrt{N}< p<\frac{N+1}{2}$, we directly calculate from Lemma \ref{3.1.10}:
\begin{eqnarray*}
||v_{\varepsilon}||_{p}^{p} \geq \int\limits_{\mathbb{R}^{N}\backslash B_{R}} |v_{\varepsilon}|^{p} dx\geq
\int\limits_{R}^{\infty}r^{N-1}\big|c r^{-\frac{N-p}{p-1}} e^{-\sqrt[p]{\varepsilon S_\varepsilon} \lambda_{\varepsilon}r}\big|^{p} dr,
\end{eqnarray*}
and as $ \varepsilon \rightarrow 0$ (i.e. $ \frac{1}{\sqrt[p]{\varepsilon} \lambda_{\varepsilon}}\rightarrow \infty$), we have
$$||v_{\varepsilon}||_{p}^{p} \geq c^{p}\int\limits_{R}^{\frac{1}{\sqrt[p]{\varepsilon S_\varepsilon} \lambda_{\varepsilon}}} r^{\frac{p^{2}-N}{p-1}-1}  dr\geq \Big(\frac{C}{\sqrt[p]{\varepsilon} \lambda_{\varepsilon}}\Big)^{\frac{p^{2}-N}{p-1}},$$
and this completes the proof.
\end{proof}
\noindent
As an immediate consequence of the above result, by \eqref{3.25}, we obtain an upper estimate of $\lambda_{\varepsilon} $ which matches the lower bound of \eqref{3.22} in dimensions $N\ge 4$ and $ \sqrt{N}< p<\frac{N+1}{2}$ .
\begin{corollary}\label{31015}
If $N\ge 4$ and $ \sqrt{N}< p<\frac{N+1}{2}$, then $ \lambda_{\varepsilon} \lesssim \varepsilon^{-\frac{1} {[(l-p^{*})(p-1)+p]}}$.
\end{corollary}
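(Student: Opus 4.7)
The proposal is to combine the new lower bound on $\|v_\eps\|_p$ from Lemma \ref{3.13} with the upper bound \eqref{3.25} in order to close a self-improving loop for $\lambda_\eps$. Concretely, I would first write Lemma \ref{3.13} in the equivalent form
\[
\|v_\eps\|_p \gtrsim \bigl(\eps^{1/p}\lambda_\eps\bigr)^{-\frac{p^2-N}{p(p-1)}},
\]
which is meaningful precisely because $p>\sqrt{N}$ makes the exponent $(p^2-N)/[p(p-1)]$ strictly positive; thus the bound genuinely grows as $\eps^{1/p}\lambda_\eps\to 0$, which is the regime guaranteed by Lemma \ref{2.3.7}.

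Next, I would plug this lower bound on $\|v_\eps\|_p$ into the second alternative of \eqref{3.25} (the $\sqrt{N}<p<N$ case),
\[
\lambda_\eps \lesssim \|v_\eps\|_p^{-1}\,\eps^{-\frac{(p^2-N)(l-p^*)+p^2}{p^2[(l-p^*)(p-1)+p]}},
\]
so that after moving the $\lambda_\eps$-dependent factor that appears inside $\|v_\eps\|_p^{-1}$ to the left-hand side one obtains
\[
\lambda_\eps^{\,1-\frac{p^2-N}{p(p-1)}} \;\lesssim\; \eps^{\,\alpha},
\]
for an explicit exponent $\alpha$.

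The remaining step is a routine but delicate algebraic simplification: using $p<N$ the left-hand exponent simplifies to $(N-p)/[p(p-1)]>0$, while on the right-hand side, writing $s:=l-p^*>0$, the exponent telescopes as
\[
\alpha \;=\; \frac{p^2-N}{p^2(p-1)}-\frac{(p^2-N)s+p^2}{p^2[s(p-1)+p]}
\;=\; -\frac{N-p}{p(p-1)\,[s(p-1)+p]},
\]
because the numerator $(p^2-N)[s(p-1)+p]-(p-1)[(p^2-N)s+p^2]$ collapses to $-p(N-p)$. Raising both sides to the power $p(p-1)/(N-p)$ then yields
\[
\lambda_\eps \;\lesssim\; \eps^{-\frac{1}{(l-p^*)(p-1)+p}},
\]
which is the asserted bound.

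No step here is conceptually hard; the main points requiring care are bookkeeping the signs (using $p<N$ and $p>\sqrt N$ to get the right direction of each inequality) and verifying the cancellation above in closed form. The real work has already been absorbed into Lemma \ref{3.13} (and the barrier construction of Lemmas \ref{3.1.9}--\ref{3.1.10} that supports it), which is precisely why the restriction $p<(N+1)/2$ propagates into the hypothesis of the corollary. Together with the lower bound \eqref{3.22}, this produces two-sided matching bounds $\lambda_\eps\sim \eps^{-1/[(l-p^*)(p-1)+p]}$ on the range $\sqrt{N}<p<(N+1)/2$, $N\ge 4$, as required by Theorem \ref{main-cor}.
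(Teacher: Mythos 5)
Your argument is exactly the paper's: the corollary is stated there as an immediate consequence of Lemma \ref{3.13} combined with the refined upper bound \eqref{3.25}, and you have simply written out the exponent bookkeeping (including the correct check that $1-\tfrac{p^{2}-N}{p(p-1)}=\tfrac{N-p}{p(p-1)}>0$ preserves the inequality and the cancellation giving $-\tfrac{N-p}{p(p-1)[(l-p^{*})(p-1)+p]}$), which the paper leaves implicit. The computation is correct, so nothing further is needed.
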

\noindent
We now move to consider the case $p=\sqrt{N}$.
\begin{lemma}\label{3.15}
If $N\ge 4$ and  $ p=\sqrt{N}$ then it holds that $||v_{\varepsilon}||_{p}^{p}\gtrsim \log (\frac{1}{\sqrt[p]{\varepsilon} \lambda_{\varepsilon}})$.
\end{lemma}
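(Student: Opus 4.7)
The plan is to mimic the argument for Lemma \ref{3.13}, exploiting the fact that when $p=\sqrt{N}$ the critical scaling exponent $\frac{p^{2}-N}{p-1}$ vanishes, so that the radial integral produces a logarithm rather than a power.

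First I would invoke the lower barrier Lemma \ref{3.1.10}, which is applicable since $p=\sqrt{N}\ge 2$ whenever $N\ge 4$ and, moreover, $\sqrt{N}<\frac{N+1}{2}$ for all $N\ge 2$. This yields constants $R>0$ and $c>0$, independent of $\eps$, such that
\[
v_{\varepsilon}(x)\;\geq\; c\,|x|^{-\frac{N-p}{p-1}}\,e^{-\sqrt[p]{\varepsilon S_{\varepsilon}}\,\lambda_{\varepsilon}|x|}\qquad(|x|>R)
\]
for all sufficiently small $\eps>0$.

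Next, I would restrict the norm computation to the exterior region and pass to polar coordinates:
\[
\|v_{\varepsilon}\|_{p}^{p}\;\geq\;\int_{\mathbb{R}^{N}\setminus B_{R}}|v_{\varepsilon}|^{p}\,dx\;\geq\;c^{p}\,\omega_{N-1}\!\int_{R}^{\infty} r^{N-1}\cdot r^{-\frac{p(N-p)}{p-1}}\,e^{-p\sqrt[p]{\varepsilon S_{\varepsilon}}\lambda_{\varepsilon}r}\,dr .
\]
The key observation is that the exponent of $r$ in the power part equals
\[
N-1-\tfrac{p(N-p)}{p-1}\;=\;\tfrac{p^{2}-N}{p-1}-1,
\]
which reduces to exactly $-1$ in the borderline case $p=\sqrt{N}$. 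Thus the integrand becomes $r^{-1}\,e^{-p\sqrt[p]{\varepsilon S_{\varepsilon}}\lambda_{\varepsilon}r}$, and restricting the range of integration to the interval $[R,\,1/(\sqrt[p]{\varepsilon S_{\varepsilon}}\lambda_{\varepsilon})]$ where the exponential factor is bounded below by a positive constant gives
\[
\|v_{\varepsilon}\|_{p}^{p}\;\gtrsim\;\int_{R}^{1/(\sqrt[p]{\varepsilon S_{\varepsilon}}\lambda_{\varepsilon})}\!\frac{dr}{r}\;=\;\log\!\Big(\tfrac{1}{R\sqrt[p]{\varepsilon S_{\varepsilon}}\lambda_{\varepsilon}}\Big)\;\sim\;\log\!\Big(\tfrac{1}{\sqrt[p]{\varepsilon}\,\lambda_{\varepsilon}}\Big),
\]
using that $S_{\varepsilon}\to S_{*}>0$ and that $\sqrt[p]{\varepsilon}\lambda_{\varepsilon}\to 0$ thanks to the upper bound \eqref{3.23}, so that the upper integration limit exceeds $R$ for all small $\eps$.

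There is no substantial obstacle here: the estimate is a direct consequence of the barrier from Lemma \ref{3.1.10} combined with the dimensional identity $p^{2}=N$; the only item requiring attention is to check that the upper limit of integration $1/(\sqrt[p]{\varepsilon S_{\varepsilon}}\lambda_{\varepsilon})$ is much larger than $R$ as $\eps\to 0$, which is ensured by Lemma \ref{2.3.7} together with $\sigma_{\varepsilon}\to 0$.
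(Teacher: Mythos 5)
Your proposal is correct and follows essentially the same route as the paper: apply the lower barrier of Lemma \ref{3.1.10}, integrate $|v_\eps|^p$ in polar coordinates over the annulus $R<|x|<1/(\sqrt[p]{\varepsilon S_\varepsilon}\lambda_\varepsilon)$ where the exponential factor is bounded below, and use that the radial exponent equals $-1$ when $p^{2}=N$ to produce the logarithm. Your extra checks (applicability of Lemma \ref{3.1.10} since $\sqrt{N}<\frac{N+1}{2}$, and $\sqrt[p]{\varepsilon}\lambda_\varepsilon\to 0$ via \eqref{3.23}) match what the paper uses implicitly.
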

\begin{proof}
Since $ p=\sqrt{N}$, by Lemma \ref{3.1.10} we immediately get

\begin{eqnarray*}
||v_{\varepsilon}||_{p}^{p}
&\geq& \int\limits_{\mathbb{R}^{N}\backslash B_{R}}r^{N-1}|v_{\varepsilon}(r)|^{p} dr
\geq
 c^{p}\int\limits_{R}^{\frac{1}{\sqrt[p]{\varepsilon S_\varepsilon} \lambda_{\varepsilon}}} r^{\frac{p^{2}-N}{p-1}-1}  dr\\
&=&c^{p}\int\limits_{R}^{\frac{1}{\sqrt[p]{\varepsilon S_\varepsilon} \lambda_{\varepsilon}}} r^{-1}  dr\geq \log(\frac{C}{\sqrt[p]{\varepsilon} \lambda_{\varepsilon}}),
\end{eqnarray*}
and this concludes the proof.
\end{proof}
\begin{corollary}\label{31017}
If $N\ge 4$ and  $ p=\sqrt{N}$ then it holds that $ \lambda_{\varepsilon} \lesssim \Big(\varepsilon (\log \frac{1}{\varepsilon})\Big)^{-\frac{(p^{*}-p)}{p(l-p)}}$.
\end{corollary}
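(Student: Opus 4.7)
The plan is to combine the bound \eqref{3.25} at $p=\sqrt{N}$ with the lower estimate for $\|v_\varepsilon\|_p$ provided by Lemma \ref{3.15}, in exact parallel with the proof of Corollary \ref{31015}, but now tracking the logarithmic correction. Concretely, \eqref{3.25} in this regime gives
$$\lambda_\varepsilon \;\lesssim\; \|v_\varepsilon\|_p^{-1}\,\varepsilon^{-\frac{p^{*}-p}{p(l-p)}}\Bigl(\log\tfrac{1}{\varepsilon}\Bigr)^{\frac{l-p^{*}}{p(l-p)}},$$
while Lemma \ref{3.15} gives $\|v_\varepsilon\|_p^p \gtrsim \log\bigl(1/(\sqrt[p]{\varepsilon}\lambda_\varepsilon)\bigr)$. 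So the task reduces to showing that $\log\bigl(1/(\sqrt[p]{\varepsilon}\lambda_\varepsilon)\bigr) \gtrsim \log(1/\varepsilon)$, for then $\|v_\varepsilon\|_p \gtrsim (\log(1/\varepsilon))^{1/p}$ and substituting back will finish the argument.

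To obtain that lower bound on the logarithm I would feed in the (already established but non-optimal) upper bound from \eqref{3.23}, namely $\lambda_\varepsilon \lesssim \varepsilon^{-(p^{*}-p)/(p(l-p))}(\log(1/\varepsilon))^{(l-p^{*})/(p(l-p))}$. Since $p^{*}<l$ implies $(p^{*}-p)/(l-p) < 1$, this upper bound yields $\sqrt[p]{\varepsilon}\lambda_\varepsilon = o(1)$ as $\varepsilon\to 0$, so in particular $\log\bigl(1/(\sqrt[p]{\varepsilon}\lambda_\varepsilon)\bigr)$ is positive and tends to $+\infty$. Taking logarithms and rearranging, with a simple arithmetic identity $\tfrac{1}{p} - \tfrac{p^{*}-p}{p(l-p)} = \tfrac{l-p^{*}}{p(l-p)}$, one gets
$$\log\!\Bigl(\tfrac{1}{\sqrt[p]{\varepsilon}\lambda_\varepsilon}\Bigr) \;=\; \tfrac{1}{p}\log\tfrac{1}{\varepsilon} - \log\lambda_\varepsilon \;\ge\; \tfrac{l-p^{*}}{p(l-p)}\log\tfrac{1}{\varepsilon} - O(\log\log\tfrac{1}{\varepsilon}),$$
which is the desired $\log\bigl(1/(\sqrt[p]{\varepsilon}\lambda_\varepsilon)\bigr) \gtrsim \log(1/\varepsilon)$.

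Inserting $\|v_\varepsilon\|_p \gtrsim (\log(1/\varepsilon))^{1/p}$ into the bound from the first step, the logarithmic exponents combine via $\tfrac{l-p^{*}}{p(l-p)} - \tfrac{1}{p} = -\tfrac{p^{*}-p}{p(l-p)}$, producing exactly
$$\lambda_\varepsilon \;\lesssim\; \varepsilon^{-\frac{p^{*}-p}{p(l-p)}}\Bigl(\log\tfrac{1}{\varepsilon}\Bigr)^{-\frac{p^{*}-p}{p(l-p)}} = \Bigl(\varepsilon\log\tfrac{1}{\varepsilon}\Bigr)^{-\frac{p^{*}-p}{p(l-p)}},$$
which is the claim of Corollary \ref{31017}.

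There is no serious obstacle here: the structural argument is identical to that of Corollary \ref{31015}, and the only bookkeeping point is to verify that the preliminary bound \eqref{3.23} is strong enough to bound $-\log\lambda_\varepsilon$ below by a positive multiple of $\log(1/\varepsilon)$. This is guaranteed by the strict inequality $p^{*}<l$, i.e.\ the same inequality that made the analogous step in the $\sqrt{N}<p<\tfrac{N+1}{2}$ range work. The restriction $N\ge 4$ enters only via the prerequisites of Lemmas \ref{3.1.9}--\ref{3.15}, on which the whole chain relies.
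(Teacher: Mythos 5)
Your proof is correct and follows essentially the same route as the paper: both combine the relation $\varepsilon\lambda_\varepsilon^{p}\|v_\varepsilon\|_p^p\lesssim\sigma_\varepsilon$ (equivalently \eqref{3.25}) with the bound \eqref{eq3.6} and the lower estimate of Lemma \ref{3.15}, and then use the crude bound \eqref{3.23} (the paper also invokes \eqref{3.22}) to replace $\log\bigl(1/(\sqrt[p]{\varepsilon}\lambda_\varepsilon)\bigr)$ by $\log(1/\varepsilon)$, which is exactly your third step. The only difference is cosmetic bookkeeping, so nothing further is needed.
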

\begin{proof}
By \eqref{eq3.21} and \eqref{eq3.6} we get
$$ C \varepsilon \lambda_{\varepsilon}^{p}\log \frac{1}{\sqrt[p]{\varepsilon} \lambda_{\varepsilon}}\leq \Big(\varepsilon (\log \frac{1}{\varepsilon})\Big)^{\frac{(l-p^{*})}{(l-p)}}. $$
Clearly,
$$ \varepsilon^{\delta_{1}} \leq \sqrt[p]{\varepsilon} \lambda_{\varepsilon}
\leq\varepsilon^{\delta_{2}},$$
for some $  \delta_{1,2}\geq 0 $ and $  \varepsilon $ small enough, by \eqref{3.22} and \eqref{3.23}. It follows that
$$ \log \frac{1}{\sqrt[p]{\varepsilon} \lambda_{\varepsilon}}\sim \log \frac{1}{\varepsilon}.$$
Hence,
$$  \lambda_{\varepsilon}^{p} \lesssim \Big(\varepsilon (\log \frac{1}{\varepsilon})\Big)^{\frac{(l-p^{*})}{(l-p)}-1}=\Big(\varepsilon (\log \frac{1}{\varepsilon})\Big)^{-\frac{(p^{*}-p)}{(l-p)}}, $$
and
$$  \lambda_{\varepsilon} \lesssim \Big(\varepsilon (\log \frac{1}{\varepsilon})\Big)^{-\frac{(p^{*}-p)}{p(l-p)}},$$
and this concludes the proof.
\end{proof}

\subsection{Proofs}

The sharp upper estimates on $\lambda_\eps$ yield the following 
\begin{corollary}\label{3118} Let either $1<p<\sqrt {N}, $ or $N\geq 4$ and $\sqrt{N}\leq p<\frac{N+1}{2}$. Then
$$||v_{\varepsilon}||_{l}=O(1).$$
\end{corollary}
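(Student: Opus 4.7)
The plan is to start from the identity \eqref{eq3.21}, which may be rewritten as
$$ ||v_\eps||_l^l \;\lesssim\; \lambda_\eps^{\,p(l-p^*)/(p^*-p)}\;\sigma_\eps,$$
and to show that in each of the three parameter regimes listed the right-hand side stays bounded as $\eps\to 0$. The first step is to simplify the exponent: since $p^*-p=p^2/(N-p)$, one has $p(l-p^*)/(p^*-p)=(N-p)(l-p^*)/p$, which is exactly the exponent appearing in the matching asymptotics below.

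In the subcritical dimensional case $1<p<\sqrt N$, I would substitute the matching bound $\lambda_\eps\sim \eps^{-(p^*-p)/(p(l-p))}$ coming from \eqref{3.22}--\eqref{3.23}, together with $\sigma_\eps\lesssim \eps^{(l-p^*)/(l-p)}$ from Lemma \ref{3.6}; the two powers of $\eps$ cancel exactly. In the borderline case $p=\sqrt N$ (with $N\ge 4$), I would use the refined upper bound of Corollary \ref{31017} matched with \eqref{3.22}, so that $\lambda_\eps\sim(\eps\log(1/\eps))^{-(p^*-p)/(p(l-p))}$; again the exponents cancel against $\sigma_\eps\lesssim (\eps\log(1/\eps))^{(l-p^*)/(l-p)}$. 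Finally for $\sqrt N<p<(N+1)/2$ (with $N\ge 4$) the refined upper bound Corollary \ref{31015} gives $\lambda_\eps\sim\eps^{-1/[(l-p^*)(p-1)+p]}$, which, paired with $\sigma_\eps\lesssim \eps^{(N-p)(l-p^*)/\{p[(l-p^*)(p-1)+p]\}}$ from Lemma \ref{3.6}, again produces an $O(1)$ bound.

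There is no serious obstacle at this stage: the hard work has already been done in constructing the lower barrier of Lemmas \ref{3.1.9}--\ref{3.1.10}, which delivered the matching upper bounds on $\lambda_\eps$ in the difficult dimensional range $\sqrt N\le p<(N+1)/2$. The corollary itself reduces to an exponent bookkeeping exercise, in which the single algebraic relation $p^*-p=p^2/(N-p)$ is used to align the Pohozaev-type exponent on the left of \eqref{eq3.21} with the gap exponent of Lemma \ref{3.6} on the right.
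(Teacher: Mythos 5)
Your argument is correct and is precisely the one the paper intends: Corollary \ref{3118} is stated with no written proof beyond the remark that the sharp upper estimates on $\lambda_\varepsilon$ yield it, and those estimates are exactly \eqref{3.23} for $1<p<\sqrt{N}$ and Corollaries \ref{31015}--\ref{31017} for $\sqrt{N}\le p<\frac{N+1}{2}$, $N\ge 4$, fed into \eqref{eq3.21} together with Lemma \ref{3.6}, just as you do. Your exponent bookkeeping (including the identity $p(l-p^*)/(p^*-p)=(N-p)(l-p^*)/p$) checks out in all three regimes, and note that only the upper bounds on $\lambda_\varepsilon$ and $\sigma_\varepsilon$ are actually needed, not the two-sided matching.
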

\noindent The boundedness of the $L^{l}$ norm also allows one to reverse the estimates of $||v_{\varepsilon}||_{p}$ via \eqref{eq3.21}.
\begin{corollary}  It holds that
$$ ||v_{\varepsilon}||^{p}_{p}=
\begin{cases}
O(1), & 1<p<\sqrt{N},\\
O(\log\frac{1}{\varepsilon}),& p=\sqrt{N},\; N\geq4,\\
O(\varepsilon^{\frac{(l-p^{*})[p-(p^{*}-p)(p-1)]}{(p^{*}-p)[(l-p^{*})(p-1)+p]}}), & \sqrt{N}<p<\frac{N+1}{2},\; N\geq4.
\end{cases}$$
\end{corollary}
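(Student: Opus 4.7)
The plan is to deduce the corollary directly from the identity \eqref{eq3.21}, the boundedness $\|v_\varepsilon\|_l^l=O(1)$ just established in Corollary \ref{3118}, and the sharp lower bounds \eqref{22s} on the rescaling $\lambda_\varepsilon$ coming from Theorem \ref{main-cor}. More precisely, starting from
\[
\lambda_\varepsilon^{-\frac{p(l-p^*)}{p^*-p}}\|v_\varepsilon\|_l^l \;=\; k\,\varepsilon\,\lambda_\varepsilon^{p}\|v_\varepsilon\|_p^p,
\]
I would collect the powers of $\lambda_\varepsilon$ on the right-hand side, noting that
\[
p+\frac{p(l-p^*)}{p^*-p}=\frac{p(l-p)}{p^*-p},
\]
so that
\[
\|v_\varepsilon\|_p^p \;=\; \frac{1}{k\,\varepsilon}\,\lambda_\varepsilon^{-\frac{p(l-p)}{p^*-p}}\,\|v_\varepsilon\|_l^l
\;\lesssim\; \varepsilon^{-1}\,\lambda_\varepsilon^{-\frac{p(l-p)}{p^*-p}}.
\]
Hence a sharp lower bound on $\lambda_\varepsilon$ translates into the desired upper bound on $\|v_\varepsilon\|_p^p$, and I would then insert the bounds of \eqref{22s} case by case.

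In the case $1<p<\sqrt{N}$, the lower bound $\lambda_\varepsilon\gtrsim\varepsilon^{-(p^*-p)/[p(l-p)]}$ gives $\lambda_\varepsilon^{p(l-p)/(p^*-p)}\gtrsim \varepsilon^{-1}$, and the two factors of $\varepsilon$ cancel, leaving $\|v_\varepsilon\|_p^p=O(1)$. In the borderline case $p=\sqrt{N}$, $N\ge 4$, the logarithmic correction in the lower bound $\lambda_\varepsilon\gtrsim(\varepsilon\log(1/\varepsilon))^{-(p^*-p)/[p(l-p)]}$ propagates through the same algebra and produces $\|v_\varepsilon\|_p^p=O(\log(1/\varepsilon))$.

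The genuine computational step, and the only place where a mistake is easy to make, is the third case $\sqrt{N}<p<(N+1)/2$ with $N\ge 4$. Setting $A:=(l-p^*)(p-1)+p$, the bound $\lambda_\varepsilon\gtrsim\varepsilon^{-1/A}$ yields
\[
\|v_\varepsilon\|_p^p\;\lesssim\;\varepsilon^{-1+\frac{p(l-p)}{A(p^*-p)}}\;=\;\varepsilon^{\frac{-A(p^*-p)+p(l-p)}{A(p^*-p)}}.
\]
The task is then to simplify the numerator: expanding $A(p^*-p)=(l-p^*)(p-1)(p^*-p)+p(p^*-p)$ and combining with $p(l-p)=p(l-p^*)+p(p^*-p)$, the two $p(p^*-p)$ terms cancel and one extracts the common factor $(l-p^*)$, giving
\[
-A(p^*-p)+p(l-p)=(l-p^*)\bigl[p-(p^*-p)(p-1)\bigr],
\]
so the exponent reduces exactly to $(l-p^*)[p-(p^*-p)(p-1)]/\{(p^*-p)[(l-p^*)(p-1)+p]\}$ as claimed.

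The main obstacle is purely algebraic: tracking the exponent in the third case and recognising the cancellation that produces the stated fraction. Everything else is an immediate consequence of the previously proved identity \eqref{eq3.21} and of the sharp bounds already available in this regime. Note that the restriction $\sqrt{N}<p<(N+1)/2$ and $N\ge 4$ enters precisely because this is the range in which both the matching lower bound \eqref{22s} and the $L^l$--boundedness from Corollary \ref{3118} are at our disposal; outside this range the argument breaks down at the step where we invoke $\|v_\varepsilon\|_l^l=O(1)$.
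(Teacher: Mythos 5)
Your argument is correct and is essentially the paper's intended proof: the corollary is stated there without a written proof, the preceding sentence indicating exactly your route, namely solving \eqref{eq3.21} for $\|v_\varepsilon\|_p^p$, inserting the $L^l$-bound of Corollary \ref{3118}, and then using the lower bounds on $\lambda_\varepsilon$ case by case, with the exponent simplification in the third case exactly as you carried it out. The only cosmetic remark is that the lower bounds \eqref{22s} are those of Theorem \ref{s123} (i.e.\ \eqref{3.22}), valid for all $1<p<N$, so the dimensional restriction indeed enters solely through Corollary \ref{3118}, as you note.
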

\noindent We now prove that the $L^{l}$ bound implies an $L^{\infty}$ bound.
\begin{lemma}\label{13120}
Let either $1<p<\sqrt {N}, $ or $N\geq 4$ and $\sqrt{N}\leq p<\frac{N+1}{2}$.  It holds that 
\begin{equation}\label{03031}
||v_{\varepsilon}||_{\infty} =O(1).
\end{equation}
\end{lemma}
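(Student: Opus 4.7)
Since $v_\eps$ is a rescaling of the radially decreasing function $w_\eps$ via \eqref{p55}, it is itself positive and radially decreasing, so $\|v_\eps\|_\infty = v_\eps(0)$ and it suffices to bound $v_\eps$ uniformly on the unit ball $B_1$. The plan is to apply a classical Serrin--Trudinger local boundedness estimate for $p$-subsolutions whose zeroth-order coefficient sits in a strictly supercritical Lebesgue space, using the uniform $L^l$ bound with $l>p^*$ as the source of super-criticality.

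Dropping the two nonnegative terms on the left hand side of $(R^*_\eps)$, I obtain the pointwise differential inequality
$$-\Delta_p v_\eps \;\leq\; S_\eps\, v_\eps^{p^*-1} \;=\; g_\eps(x)\, v_\eps^{p-1} \qquad \text{in } \R^N,\qquad g_\eps := S_\eps\, v_\eps^{p^*-p}.$$
By Corollary \ref{3118} together with \eqref{*e2.41}, in both of the regimes considered one has $\|v_\eps\|_{p^*} = 1+o(1)$ and $\|v_\eps\|_l = O(1)$, hence
$$\|g_\eps\|_{L^q(\R^N)} \;=\; S_\eps\, \|v_\eps\|_l^{p^*-p} \;=\; O(1),\qquad q := \frac{l}{p^*-p}.$$
The assumption $l>p^*$ yields $q > p^*/(p^*-p) = N/p$ with a strictly positive gap $q - N/p = (l-p^*)/(p^*-p) > 0$ independent of $\eps$.

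With $g_\eps$ in a strictly supercritical Lebesgue space uniformly in $\eps$, the standard local boundedness theorem for nonnegative subsolutions of $-\Delta_p v \leq g(x) v^{p-1}$ (Serrin, Trudinger; see e.g. the Moser iteration in \cite{PucciC} or \cite{DiBenedetto}) applies on $B_2$ and yields
$$\sup_{B_1} v_\eps \;\leq\; C\bigl(N,p,q,\,\|g_\eps\|_{L^q(B_2)}\bigr)\, \|v_\eps\|_{L^{p^*}(B_2)}.$$
The right hand side is $O(1)$ uniformly in $\eps$, and the constant in Serrin's estimate depends continuously on $\|g_\eps\|_{L^q(B_2)}$, so the conclusion $\sup_{B_1} v_\eps = O(1)$ follows. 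Radial monotonicity gives $\|v_\eps\|_\infty = v_\eps(0) \leq \sup_{B_1} v_\eps = O(1)$, which is \eqref{03031}.

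The crux of the argument---rather than a genuine obstacle---is the strict super-criticality $q = l/(p^*-p) > N/p$, which is precisely the structural reason for introducing the super-$p^*$ term $v^{l-1}$ in $(P_\eps)$. In the borderline case $l=p^*$ the gap closes, Serrin's estimate degenerates, and indeed one would expect blow-up of $v_\eps(0)$, in accordance with concentration phenomena; here the strictness $l>p^*$ is precisely what saves the uniform $L^\infty$ bound.
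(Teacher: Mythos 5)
Your proposal is correct, and the paper in fact anticipates it: after deriving the same differential inequality $-\Delta_p v_\eps \leq V_\eps(x)\,v_\eps^{p-1}$ with $V_\eps = S_\eps v_\eps^{p^*-p}$, the paper remarks that one could invoke the local boundedness theorem for subsolutions (Theorem 7.1.1 in \cite{PucciB}) and conclude, but then deliberately writes out a self-contained elementary argument instead. The two routes differ in how the potential is controlled and in the final step. You keep $g_\eps = S_\eps v_\eps^{p^*-p}$ as a potential and bound it directly in $L^{q}$ with $q=l/(p^*-p)>N/p$ (your computation that $q>N/p$ is exactly $l>p^*$ is correct), then cite the Serrin--Trudinger estimate with constant controlled by $\|g_\eps\|_{L^q(B_2)}$ and initial norm $\|v_\eps\|_{L^{p^*}(B_2)}=1+o(1)$; this is short but uses the Moser-iteration machinery as a black box, and one must (as you do) note that the constant is uniform for uniformly bounded $\|g_\eps\|_{L^q}$. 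The paper instead converts the $L^l$ bound into a pointwise decay $V_\eps(x)\le C_*|x|^{-pp^*/l}$ via the radial estimate of Lemma \ref{41}, then integrates the inequality over balls, uses the divergence theorem and radial monotonicity to get $|\nabla v_\eps(x)|^{p-1}\lesssim v_\eps(0)^{p-1}|x|^{1-pp^*/l}$, integrates once more to obtain $v_\eps(x_0)\ge C_8\,v_\eps(0)$ on a small fixed ball, and closes with the uniform $L^l$ bound; this is longer but entirely self-contained and exploits the radial structure rather than general elliptic theory. Both arguments hinge on the same two inputs, Corollary \ref{3118} and $l>p^*$, so your proof is a legitimate (and slightly more standard) alternative; the only cosmetic point is that the cleanest reference for the black-box step is \cite{PucciB}*{Theorem 7.1.1} (or Serrin's 1964 theorem) rather than \cite{PucciC} or \cite{DiBenedetto}.
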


\begin{proof}
We start observing that by \eqref{3321} $v_{\varepsilon} $ is a positive solution to the inequality
$$-\Delta_{p} v_{\varepsilon} -V_{\varepsilon}(x) v_{\varepsilon}^{p-1} \leq 0, \quad x\in \mathbb{R}^{N},$$
with
$$V_{\varepsilon}(x):= S_{\varepsilon} v^{p^{*}-p}_{\varepsilon}(x).$$
By Lemma \ref{41} in the Appendix, we obtain
\begin{equation}\label{0330}
|v_{\varepsilon}(x)|\leq C_{l} ||v_{\varepsilon}||_{l} |x|^{-N/l} \quad x\neq0, 
\end{equation}
which combined with Corollary \ref{3118} yields
$$V_{\varepsilon}(x)\leq S_{\varepsilon} C^{p^{*}-p}_{l} ||v_{\varepsilon}||^{p^{*}-p}_{l}|x|^{-N(p^{*}-p)/l}\leq C_{*} |x|^{-pp^{*} /l},$$
for some uniform constant $ C_{*} >0$ independent on $\varepsilon $ or $x $. Hence, $v_{\varepsilon} $ is a positive solution to the inequality
\begin{equation} \label{0331}
-\Delta_{p} v_{\varepsilon} -V_{*}(x) v^{p-1}_{\varepsilon} \leq 0, \quad x\in \mathbb{R}^{N},
\end{equation}
with $ V_{*}(x)= C_{*} |x|^{-pp^{*}/l} \in L_{loc}^{s} (\mathbb{R}^{N})$ for some  $s>N/p$, since $l>p^{*}$.
With these preliminaries in place, one can invoke here the result on local boundedness Theorem $7.1.1 $ in \cite{PucciB}*{p.154} for subsolutions of \eqref{0331} to conclude. However, to make the proof selfcontained, we provide a simple argument to justify \eqref{03031}.

\noindent Integrating the inequality \eqref{0331} over a ball
$$\int\limits_{B_{|x|}(0)}-\Delta_{p} v_{\varepsilon}(x) dx \leq \int\limits_{B_{|x|}(0)}V_{*}(x) v^{p-1}_{\varepsilon}(x) dx, $$
and by the divergence theorem, taking into account the monotonicity of $ v_{\varepsilon}$ with respect to $|x| $ we have
\begin{eqnarray*}
\int\limits_{ B_{|x|}(0)}-\Delta_{p} v_{\varepsilon}(y) dy&=& \int\limits_{\partial B_{|x|}(0)}-|\nabla v_{\varepsilon}(\sigma)|^{p-2} \nabla v_{\varepsilon}(\sigma) \cdot \nu \,d\sigma=\\
&= &|\nabla v_{\varepsilon}(x)|^{p-1} \int\limits_{\partial B_{|x|}(0)}d\sigma=C_{1} |\nabla v_{\varepsilon}(x)|^{p-1} |x|^{N-1}.
\end{eqnarray*}
On the other hand
$$\int\limits_{B_{|x|}(0)} V_{*}(y) v^{p-1}_{\varepsilon}(y) dy= C_{2}\int\limits_{0}^{|x|} r^{-\frac{pp^{*}}{l}+N-1} v^{p-1}_{\varepsilon}(r)dr\leq$$
$$\leq C_{2}|v_{\varepsilon}(0)|^{p-1} \int\limits_{0}^{|x|} r^{-\frac{pp^{*}}{l}+N-1} dr= C_{3} |v_{\varepsilon}(0)|^{p-1} |x|^{-\frac{pp^{*}}{l}+N},$$
since $ -\frac{pp^{*}}{l}+N >0 $.
Hence
\begin{equation} \label{0332}
|\nabla v_{\varepsilon}(x)|^{p-1}\leq \frac{C_{4}}{|x|^{N-1}}\int\limits_{B_{|x|}(0)} V_{*}(r) v^{p-1}_{\varepsilon}(r) dr \leq C_{5} |v_{\varepsilon}(0)|^{p-1} |x|^{1-pp^{*}/l},
\end{equation}
for some $C_{4},C_{5}>0$ independent of $\varepsilon $ and $x $. Integrating again from $0$ to $x_{0} $ after writing \eqref{0332} in this form
$$-\frac{d}{dr} v_{\varepsilon}(|x|)\leq  C_{6} |v_{\varepsilon}(0)| |x|^{(1-pp^{*}/l)/(p-1)},$$
we have
\begin{equation}\label{03034}
v_{\varepsilon}(0)\leq v_{\varepsilon}(x_{0}) +C_{7} v_{\varepsilon} (0) |x_{0}|^{\frac{p(l-p^{*})}{l(p-1)}},
\end{equation}
for some $ C_{7} $ independent of $\varepsilon $ and $x $.
We pick $ A$ small enough such that for all $|x_{0}|\leq A $ we have
$$v_{\varepsilon}(0)(1-C_{7}A^{\frac{p(l-p^{*})}{l(p-1)}}) \leq v_{\varepsilon}(0)(1-C_{7}|x_{0}|^{\frac{p(l-p^{*})}{l(p-1)}})\leq v_{\varepsilon}(x_{0}).$$
Then
$$C_{8}v_{\varepsilon}(0)\leq v_{\varepsilon}(x_{0}), \quad \text{for\; all }\; x_{0}, \; |x_{0}|< A,$$
where $C_{8}= 1-C_{7}A^{\frac{p(l-p^{*})}{l(p-1)}}$.
Hence by taking the power $ l$ and  integrating we obtain
$$\int\limits_{|x|<A}C_{9}|v_{\varepsilon}(0)|^{l} dx\leq \int\limits_{|x|<A}|v_{\varepsilon}(x)|^{l}dx.$$
which by Corollary \ref{3118} immediately concludes the proof.
\end{proof}
\noindent 
By elliptic estimates for the $p$-Laplacian, we have the following

\begin{corollary}\label{3121}
Let either $1<p<\sqrt {N}, $ or $N\geq 4$ and $\sqrt{N}\leq p<\frac{N+1}{2}$. It holds that
$v_{\varepsilon}\rightarrow W_{1}$ in $C_{loc}^{1,\alpha}(\mathbb{R}^{N}) $ and $L^{s} (\mathbb{R}^{N})$ for any $ s\geq p^{*} $. In particular,
$$ v_{\varepsilon}(0) \simeq U_{1}(0). $$
\end{corollary}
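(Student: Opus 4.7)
The plan is to combine the three previously established ingredients: the uniform $L^{\infty}$ bound from Lemma \ref{13120}, the $D^{1,p}$ (and $L^{p^{*}}$) convergence to $W_{1}$ from Lemma \ref{2.3.6}, and the uniform decay coming from the radial Ni-type estimate. The statement decomposes into three assertions: upgrading the convergence to $C^{1,\alpha}_{loc}$, upgrading it to $L^{s}$ for $s\ge p^{*}$, and reading off the pointwise value at the origin.

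For the $C^{1,\alpha}_{loc}$ convergence, I would look at the rescaled equation $(R^{*}_{\varepsilon})$ satisfied by $v_{\varepsilon}$. Under the standing assumptions the coefficients $\varepsilon\lambda_{\varepsilon}^{p}$ and $\lambda_{\varepsilon}^{-(N-p)(l-p)/p+p}$ tend to $0$ (the former by $\varepsilon\lambda_{\varepsilon}^{p}\lesssim\sigma_{\varepsilon}\to 0$ via Lemma \ref{2.3.7} and Lemma \ref{3.6}, the latter because the exponent is negative as $l>p^{*}$ yields $(N-p)(l-p)>p^{2}$), and by Lemma \ref{13120} the $L^{\infty}$ norm of $v_{\varepsilon}$ is controlled. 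Hence the right-hand side of $(R^{*}_{\varepsilon})$ is uniformly bounded on $\mathbb{R}^{N}$. The interior $C^{1,\alpha}$ regularity theory of DiBenedetto/Tolksdorf then yields uniform $C^{1,\alpha}$ bounds on compact sets. By Arzel\`a-Ascoli, every subsequence has a further subsequence converging in $C^{1,\alpha}_{loc}$ to some limit, and Lemma \ref{2.3.6} forces this limit to equal $W_{1}$. Uniqueness of the limit upgrades to the full convergence $v_{\varepsilon}\to W_{1}$ in $C^{1,\alpha}_{loc}$. I expect this step to be the main technical point, essentially the verification that all coefficient terms in $(R^{*}_{\varepsilon})$ remain uniformly bounded under the dimensional restrictions.

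For the $L^{s}$ convergence with $s\ge p^{*}$, the case $s=p^{*}$ is Lemma \ref{2.3.6}. For $s>p^{*}$, I would interpolate between $L^{p^{*}}$ and $L^{\infty}$: since $\|v_{\varepsilon}\|_{\infty}$ is uniformly bounded by Lemma \ref{13120} and $\|W_{1}\|_{\infty}<\infty$ from the explicit formula \eqref{232}, one has
\begin{equation*}
\|v_{\varepsilon}-W_{1}\|_{s}\le \|v_{\varepsilon}-W_{1}\|_{p^{*}}^{p^{*}/s}\,\|v_{\varepsilon}-W_{1}\|_{\infty}^{1-p^{*}/s}\lesssim \|v_{\varepsilon}-W_{1}\|_{p^{*}}^{p^{*}/s}\longrightarrow 0.
\end{equation*}
Uniform integrability at infinity is not an issue here because we are only bounding an $L^{s}$ norm of a difference of functions that are already in $L^{p^{*}}\cap L^{\infty}$; the radial decay estimate \eqref{0330} together with Corollary \ref{3118} provides any additional pointwise decay one may wish to invoke to make the integrals finite.

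Finally, the pointwise identity $v_{\varepsilon}(0)\simeq U_{1}(0)$ is immediate from the $C^{1,\alpha}_{loc}$ (in particular $C^{0}_{loc}$) convergence: $v_{\varepsilon}(0)\to W_{1}(0)$, and by construction \eqref{234} we have $W_{1}(0)=U_{1,0}(0)=U_{1}(0)$, so $v_{\varepsilon}(0)/U_{1}(0)\to 1$. The whole argument is essentially a regularity bootstrap: once uniform $L^{\infty}$ has been achieved in Lemma \ref{13120}, the equation itself does the rest via standard quasilinear elliptic estimates.
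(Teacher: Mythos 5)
Your proposal is correct and follows essentially the same route as the paper: the uniform $L^{\infty}$ bound of Lemma \ref{13120} makes the right-hand side of \eqref{3321} uniformly bounded in $L^{\infty}_{loc}$, DiBenedetto's estimates give uniform $C^{1,\alpha}_{loc}$ bounds, and Arzel\`a--Ascoli together with the uniqueness of the limit $W_{1}$ from Lemma \ref{2.3.6} yields the local convergence, while the $L^{s}$ convergence and $v_{\varepsilon}(0)\simeq U_{1}(0)$ follow as you describe. The only cosmetic difference is that you obtain the $L^{s}$ convergence for $s\geq p^{*}$ by a direct $L^{p^{*}}$--$L^{\infty}$ interpolation, whereas the paper invokes the radial decay/compactness result of Lemma \ref{41}; both are standard and equally valid.
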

\begin{proof}
As a consequence of the $L^{\infty} $ bound of Lemma \ref{13120} and the convergence of $v_{\varepsilon} $ to the Sobolev minimiser $W_{1}$ in $D^{1,p}(\mathbb{R}^{N})$ via the compactness result  in Lemma \ref{41} we obtain the convergence in $L^{s}(\mathbb{R}^{N}) $ for any $s\geq p^{*} $.
\noindent Since we can write \eqref{3321} in the form
$$ -\Delta_{p}v_{\varepsilon}=g_\eps(v_{\varepsilon}),$$
and by Lemma \ref{13120} we have
$$ ||g_\eps(v_{\varepsilon})||_{L_{loc}^{\infty}(\mathbb{R}^{N})}< C,$$
uniformly with respect to $\varepsilon, $ then by \cite{DiBenedetto}*{Theorem $2$} we have
$$||v_{\varepsilon} ||_{C_{loc}^{1,\alpha}(\mathbb{R}^{N})}<C,$$ uniformly with respect to $\varepsilon $. It follows that by the classical Arzel\'a-Ascoli theorem that for a suitable sequence $\varepsilon\rightarrow 0 $ we have
$$v_{\varepsilon} \rightarrow W_{1} \quad \text{in} \; C_{loc}^{1,\alpha'}(\mathbb{R}^{N}),$$
where $\alpha<\alpha' $.
\end{proof}
\begin{proof}[Proof of Theorem \ref{s123}]    
   The proof follows immediately from Lemma \ref{2.3.6} and Lemma \ref{2.3.7}, 
 	which yield the upper and lower estimates on $\lambda_{\varepsilon}$.
\end{proof}

\begin{proof} [Proof of Theorem \ref{main-cor}] The proof follows from the sharp upper bound on $\lambda_{\varepsilon}$ in Corollaries \ref{31015}-\ref{31017}, and from Corollary \ref{3121}. In particular since from Corollary \ref{3121} and in view of (\ref{p27}) we have
$$u_{\varepsilon}(0) \sim \lambda^{-\frac{N-p}{p}}_{\varepsilon}v_{\varepsilon}(0),$$
then by the sharp estimate of $\lambda_{\varepsilon} $ we have the exact rate of the groundstate $u_{\varepsilon} (0)$ in the present critical case
\begin{equation}
u_{\varepsilon}(0)\sim
\begin{cases} 
\varepsilon^{\frac{l}{(l-p)}}& 1<p<\sqrt{N},\; N\ge 2,\\
\varepsilon^{\frac{N-p} {p[(l-p^{*})(p-1)+p]}} &\sqrt{N}<p<\frac{N+1}{2},\; N\geq4,\\
(\varepsilon (\log \frac{1}{\varepsilon})\Big)^{\frac{l}{(l-p)}} & p=\sqrt{N},\;  N\geq4.
\end{cases}
\end{equation}
\end{proof}
   
\section{Proof of Theorem \ref{ss122}: supercritical case $q>p^{*}$}  \label{7} 

In this section, we consider the supercritical case $q>p^*$ and prove Theorem \ref{ss122} stated in the Introduction,
which essentially says that for $q>p^{*}$ groundstate solutions $ u_{\varepsilon}$ converge as $\varepsilon\rightarrow 0 $ to a non-trivial radial groundstate solution  of the formal limit equation \eqref{29}.
This result extends \cite{Moroz-Muratov}*{Theorem $ 2.3$} to $p\neq2$.

\subsection{The limiting PDE}
From the results of Section \ref{PDEsection} we know that for $q>p^{*} $ the limit equation
\begin{equation}\tag{$P_{0}$}\label{29}
-\Delta_p u-u^{q-1}+u^{l-1}=0\quad\text{in } \;\mathbb{R}^N,
\end{equation}
admits positive radial groundstates solutions $ u_{0} \in D^{1,p}(\mathbb{R}^{N})\cap L^{l}(\mathbb{R}^{N})$,
which are, since they are radial, fast decaying, namely such that
\begin{equation}\label{030308}
u_{0}(x)\sim |x|^{-\frac{N-p}{p-1}} \quad \text{as}\quad |x| \rightarrow \infty.
\end{equation}
Note that by construction $u_{0}\in C_{loc}^{1,\alpha}(\mathbb{R}^{N})$.
Moreover $u_{0} $ admits a variational charachterization in the Sobolev space $D^{1,p} (\mathbb{R}^{N})$ via the rescaling
$$u_{0}(x):= w_{0}\Big(\frac{x}{\sqrt[p]{S_{0}}}\Big),$$
where $ w_{0}$ is a positive radial minimizer of the constrained minimization problem
\begin{equation}\tag{$S_{0}$}\label{991}
 S_{0}:= \inf \Big\{\int\limits_{\mathbb{R}^{N}}|\nabla w|^{p}dx\Big| w\in D^{1,p}(\mathbb{R}^{N}), \quad p^{*}\int\limits_{\mathbb{R}^{N}} \tilde F_{0}(w)dx=1 \Big\},
 \end{equation}
where
$$ \tilde F_{0}(w)= \int_{0}^{w} \tilde f_{0}(s) ds,$$
and $\tilde f_0(s)$ is a truncation of the nonlinearity
$$f_0(s)=|s|^{q-2} s - |s|^{l-2} s,$$
as described in Section \ref{PDEsection}.
Then the minimization problem \eqref{991} is well defined on $D^{1,p}(\mathbb{R}^{N}) $.
The minimizer $w_{0}$ satisfies the Euler-Lagrange equation
$$ - \Delta_{p}w_{0}= S_{0} (w_{0}^{q-1} -w_{0}^{l-1}).$$
Moreover, $w_{0} $ satisfies Nehari's identity
$$\int\limits_{\mathbb{R}^{N}}|\nabla w_{0}|^{p}dx =S_{0} \Big( \int\limits_{\mathbb{R}^{N}}|w_{0}|^{q}dx- \int\limits_{\mathbb{R}^{N}}|w_{0}|^{l}dx\Big), $$
which yields 

\begin{equation}\label{030390}
1 = ||w_{0}||_{q}^{q}- ||w_{0}||_{l}^{l}.
\end{equation}
From the Poho\v{z}aev identity
$$\int\limits_{\mathbb{R}^{N}}|\nabla w_{0}|^{p}dx =S_{0}  p^{*}\Big(\frac{1}{q} \int\limits_{\mathbb{R}^{N}}|w_{0}|^{q}dx-\frac{1}{l} \int\limits_{\mathbb{R}^{N}}|w_{0}|^{l}dx\Big),$$
we have

\begin{equation}\label{030400}
1 = \frac{p^{*}}{q} ||w_{0}||_{q}^{q}-\frac{p^{*}}{l} ||w_{0}||_{l}^{l}.
\end{equation}
Hence from \eqref{030390} and \eqref{030400} we obtain the relation
$$ ||w_{0}||_{q} ^{q}-||w_{0} ||^{l}_{l}=\frac{p^{*}}{q}||w_{0}||^{q}_{q}-\frac{p^{*}}{l}||w_{0}||^{l}_{l}=1,$$
from which we obtain the expressions
$$||w_{0}||^{q}_{q}= \frac{q(l-p^{*})}{p^{*}(l-q)}, \quad ||w_{0}||^{l}_{l}=\frac{l(q-p^{*})}{p^{*}(l-q)}.$$

\subsection{Energy estimates and groundstate asymptotics}
The relations between $ \mathcal{S}_{\varepsilon}$ and $\mathcal{S}_{0}$ is provided by introducing the convenient scaling-invariant quotient
\begin{equation}\label{S0Rw}
\mathcal{S}_{0}(w):= \frac{\int\limits_{\mathbb{R}^{N}}|\nabla w|^{p}dx }{\Big(p^{*}\int\limits_{\mathbb{R}^{N}}\tilde F_{0}(w)dx\Big)^{(N-p)/N} }, \quad w\in \mathcal{M}_{0},
\end{equation}
where
$$  \mathcal{M}_{0}:= \Big\{w \in D^{1,p}(\mathbb{R}^{N}),\; \int\limits_{\mathbb{R}^{N}} \tilde F_0(w)dx>0\Big\}.$$ Note that, by a rescaling argument, this is equivalent to \eqref{991} :
$$ S_{0}=\displaystyle\inf\limits_{w\in\mathcal{M}_{0}}\mathcal{S}_{0}(w).$$
\begin{lemma}\label{24}
	For all $1<p<N$, it holds that
	$$0<S_{\varepsilon} - S_{0}\rightarrow0, \quad \text{as} \quad \varepsilon\rightarrow 0.$$
\end{lemma}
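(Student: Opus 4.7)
The plan is to exploit the scaling-invariant formulations $\mathcal{S}_\eps$ from \eqref{p29} and $\mathcal{S}_0$ from \eqref{S0Rw}, using the minimizer of one problem as a test function for the other. The key pointwise comparison is that on $[0,1]$ the truncations from \eqref{p21} and those of Theorem \ref{11224411} satisfy $\tilde F_0(u)-\tilde F_\eps(u)=(\eps/p)u^p$, while on $(1,\infty)$ the difference equals $\eps(u-1+1/p)$; in both cases this is strictly positive wherever $u>0$.

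To obtain the lower bound $S_\eps>S_0$, I would take the minimizer $w_\eps$ of $(S_\eps)$ given by Theorem \ref{th3.2.9}. Since $w_\eps$ is a positive nontrivial function in $W^{1,p}(\R^N)\subset D^{1,p}(\R^N)$, the pointwise inequality above integrates to
\[
p^*\!\int_{\R^N}\tilde F_0(w_\eps)\,dx>p^*\!\int_{\R^N}\tilde F_\eps(w_\eps)\,dx=1,
\]
so $w_\eps\in\mathcal{M}_0$, and
\[
S_0\leq\mathcal{S}_0(w_\eps)=\frac{S_\eps}{\bigl(p^*\!\int\tilde F_0(w_\eps)\,dx\bigr)^{(N-p)/N}}<S_\eps.
\]

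For the decay $S_\eps-S_0\to 0$, I would use the radial minimizer $w_0$ of $(S_0)$ given by Theorem \ref{11224411}, which lies in $D^{1,p}(\R^N)\cap L^l(\R^N)\cap C^{1,\alpha}_{loc}(\R^N)$ and is bounded; however, in view of the decay \eqref{030308}, $w_0$ belongs to $L^p(\R^N)$ only when $p<\sqrt N$, so for general $p$ it cannot serve directly as a test function in $\mathcal{M}_\eps\subset W^{1,p}(\R^N)$. I would introduce a cutoff $\eta_R\in C_0^\infty(\R)$ with $\eta_R\equiv 1$ on $[0,R]$, $\eta_R\equiv 0$ outside $[0,2R]$ and $|\eta_R'|\leq 2/R$, and set $\phi_R:=\eta_R w_0\in W^{1,p}(\R^N)$. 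A calculation parallel to \eqref{eq3.38}-\eqref{eq3.398}, based on \eqref{030308}, yields
\[
\|\nabla\phi_R\|_p^p\to\|\nabla w_0\|_p^p=S_0,\qquad p^*\!\int_{\R^N}\tilde F_0(\phi_R)\,dx\to 1\quad(R\to\infty).
\]

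Given $\delta>0$, I would first fix $R=R_\delta$ large enough that $\|\nabla\phi_{R_\delta}\|_p^p\leq S_0+\delta$ and $p^*\!\int\tilde F_0(\phi_{R_\delta})\,dx\geq 1-\delta$. Because $\phi_{R_\delta}$ is bounded with compact support, the truncations differ by at most $C(R_\delta)\,\eps$ in $L^1$, so $p^*\!\int\tilde F_\eps(\phi_{R_\delta})\,dx\geq 1-2\delta$ for all $\eps$ small enough; hence $\phi_{R_\delta}\in\mathcal{M}_\eps$ and
\[
S_\eps\leq\mathcal{S}_\eps(\phi_{R_\delta})\leq\frac{S_0+\delta}{(1-2\delta)^{(N-p)/N}}.
\]
Taking first $\eps\to 0$ and then $\delta\to 0$ gives $\limsup_{\eps\to 0}S_\eps\leq S_0$, which together with the lower bound completes the proof. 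The main obstacle is precisely this cutoff/diagonal step: since $w_0$ genuinely fails to be in $W^{1,p}(\R^N)$ when $p\geq\sqrt N$, one must verify that truncating $w_0$ preserves both the Dirichlet energy and the $\tilde F_\eps$-mass to leading order, using the explicit decay \eqref{030308} of $w_0$ at infinity.
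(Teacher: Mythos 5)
Your proposal is correct, and its skeleton is the same as the paper's: the strict lower bound $S_0<S_\eps$ is obtained exactly as in the paper, from $S_0\le\mathcal{S}_0(w_\eps)<\mathcal{S}_\eps(w_\eps)=S_\eps$ via the pointwise positivity of $\tilde F_0-\tilde F_\eps$, and the upper bound is obtained by testing \eqref{e2.4} with the limiting minimizer $w_0$, cut off because $w_0\in L^p(\R^N)$ fails when $p\ge\sqrt N$. Where you genuinely deviate is in the handling of the cutoff: the paper splits into the three cases $1<p<\sqrt N$ (direct test, no cutoff), $p=\sqrt N$ and $\sqrt N<p<N$, and in the latter two it ties the cutoff radius to $\eps$ ($R=\eps^{-1}$, resp.\ $R=\eps^{-1/p}$), balancing the error terms so as to produce explicit rates $S_\eps\le S_0+O(\eps)$, $O(\eps\log\frac1\eps)$, $O(\eps^{\frac{N-p}{p(p-1)}})$. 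You instead decouple $R$ from $\eps$ (fix $R_\delta$, let $\eps\to0$, then $\delta\to0$), which treats all $1<p<N$ uniformly, avoids the case analysis, and uses only that the truncations differ by $O(\eps)$ on a fixed compact set; the price is that you get the purely qualitative conclusion $S_\eps-S_0\to0$ with no rate. Since the lemma as stated (and its uses in Lemmas \ref{325}, \ref{326} and Theorem \ref{246}) only needs $0<S_\eps-S_0\to0$, your more elementary argument suffices, but it would not reproduce the quantitative information the paper's computation provides as a by-product.
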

\begin{proof}
	To show that $\mathcal{S}_{0}<\mathcal{S}_{\varepsilon}$, simply note that
	\begin{equation}
S_{0}\leq \mathcal{S}_{0}(w_{\varepsilon})< \mathcal{S}_{\varepsilon}(w_{\varepsilon})=S_{\varepsilon}.
	\end{equation}
	To estimate $ S_{\varepsilon}$ from above we test \eqref{e2.4} with the minimizer $ w_{0}.$  By \eqref{030308}, we have $w_{0}\in L^{p}(\mathbb{R}^{N})$ if and only if $1<p<\sqrt{N}$. We break the proof by analysing the higher and lower dimensions separately.
	
	\noindent
	{\em Case $1<p<\sqrt{N}$.}
	Using $w_0$ as a test function for \eqref{e2.4}, we obtain
	\begin{eqnarray}
		S_{\varepsilon}\leq\mathcal{S}_{\varepsilon}(w_{0})
		\leq \frac{S_{0}}{(1-\frac{ \varepsilon p^{*} }{p}||w_{0}||_{L^{p}(\mathbb{R}^{N})}^{p})^\frac{N-p}{N}}\leq S_{0}+O(\varepsilon),
	\end{eqnarray}
which proves the statement for $1<p<\sqrt{N}$.

In the cases $p=\sqrt{N} $ and $ \sqrt{N} <p<N $, given $R>1$
we pick a cut-off function $ \eta_{R} \in C^{\infty}_{0}(\mathbb{R})$ such that $ \eta_{R}(r)=1 $ for $ |r|<R $, $ 0<\eta_{R}<1 $ for $ R<|r|<2R $, $ \eta_{R}=0 $ for $ |r|>2R $ and $ |\eta^{'}_{R}|\leq 2/R $. By \eqref{030308}, for $s>\frac{N}{N-p}$ we obtain
	$$
	\int_{\mathbb{R}^{N}} |\nabla(\eta_{R} w_{0}(x)|^{p}dx=S_{0}+O(R^{-\frac{N-p}{p-1}} ),
	$$
	$$
	\int_{\mathbb{R}^{N}} |\eta_{R} w_{0}(x)|^{s}dx =|| w_{0}||^{s}_{L^{s}(\mathbb{R}^{N})}\Big(1- O(R^{-\frac{(N-p)s}{p-1}+N})\Big),
	$$
	$$
	\int_{\mathbb{R}^{N}} |\eta_{R} w_{0}(x)|^{p}dx=\begin{cases}
	O( \log(R)), & p=\sqrt{N},\\
	O (R^{\frac{p^{2}-N}{p-1}}), & \sqrt{N}<p<N.\\
	\end{cases}
	$$
	
		\noindent
	{\em
		Case $p=\sqrt{N}$.} Let $R=\varepsilon^{-1}$. Testing \eqref{e2.4} with $ \eta_{R}w_{0}$ and since $ q>p^{*}$, we get
	\begin{multline*}
	$$ S_{\varepsilon}\leq\mathcal{S}_{\varepsilon}(\eta_{R}w_{0})\leq
	\Big(S_{0}+O(R^{-\frac{N-p}{p-1}} ) \Big)\div\\
	\Big(\frac{p^{*}}{q}|| w_{0}||^{q}_{q}\big(1- O(R^{-\frac{(N-p)q}{p-1}+N})\big)-\frac{ \varepsilon p^{*}}{p}O( \log R)-\frac{p^{*}}{l} || w_{0}||^{l}_{l}\big(1- O(R^{-\frac{(N-p)l}{p-1}+N})\big)\Big)^\frac{N-p}{N} $$
	\end{multline*}
	$$=\frac{S_{0}+O(\varepsilon^{\frac{N-p}{p-1}} ) }{\Big(1- o(\varepsilon^{\frac{N}{p-1}})-  O(\varepsilon \log\frac{1}{\varepsilon})\Big)^{\frac{N-p}{N}}}	\leq S_{0}+O\Big(\varepsilon \log\frac{1}{\varepsilon} \Big),$$
	from which the claim follows.
	
	\noindent
	{\em Case $\sqrt{N}<p<N$.}
	Let $R=\varepsilon^{-\frac{1}{p}}$. We test \eqref{e2.4} with $ \eta_{R}w_{0}$ and as $ q>p^{*}$, we obtain
	\begin{multline*}
	$$ S_{\varepsilon}\leq\mathcal{S}_{\varepsilon}(\eta_{R}w_{0})\leq
	\Big(S_{0}+O((R)^{-\frac{N-p}{p-1}} ) \Big)\div\\
	\Big(\frac{p^{*}}{q}|| w_{0}||^{q}_{q}(1- O(R^{-\frac{(N-p)q}{p-1}+N}))-\frac{ \varepsilon p^{*}}{p}O (R^{\frac{p^{2}-N}{p-1}})-\frac{p^{*}}{l} || w_{0}||^{l}_{l}(1- O((R)^{-\frac{(N-p)l}{p-1}+N}))\Big)^\frac{N-p}{N} $$
	\end{multline*}
	$$\leq \frac{S_{0}+O(\varepsilon^{\frac{N-p}{p(p-1)}} ) }{\Big(1- o(\varepsilon^{\frac{N}{p(p-1)}})-   O (\varepsilon^{-\frac{p^{2}-N}{p(p-1)}+1})\Big)^{\frac{N-p}{N}}}\leq S_{0}+O(\varepsilon^{\frac{N-p}{p(p-1)}}),$$
which completes the proof.
\end{proof}

\begin{lemma}\label{325}
	It holds that $||w_{\varepsilon}||_{\infty}\leq1$ and $||w_{\varepsilon}||_{s}\lesssim1$ for all $s>p^{*} $.
\end{lemma}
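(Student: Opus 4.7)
The plan is to first establish the uniform pointwise bound $\|w_\eps\|_\infty \le 1$ by exploiting the specific form of the truncation $\tilde f_\eps$ in \eqref{p21}, and then to deduce the $L^s$ bounds for $s>p^*$ by combining this pointwise bound with Sobolev's embedding and the asymptotics for $S_\eps$ coming from Lemma \ref{24}.

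For the $L^\infty$ bound I would test the Euler--Lagrange equation \eqref{e3.7-0} against $\phi := (w_\eps - 1)_+$, which is an admissible test function in $W^{1,p}(\R^N)$ since $w_\eps \in W^{1,p}(\R^N)$. Because the truncation \eqref{p21} satisfies $\tilde f_\eps(u) \equiv -\eps$ for $u > 1$, this yields
$$\int_{\R^N}|\nabla (w_\eps - 1)_+|^p\,dx = -\eps\, S_\eps \int_{\R^N}(w_\eps - 1)_+\,dx.$$
The left-hand side is nonnegative while the right-hand side is nonpositive, so both must vanish, forcing $w_\eps \le 1$ a.e.; by the $C^{1,\alpha}_{loc}$ regularity of $w_\eps$ provided by Theorem \ref{th3.2.9} the inequality then holds pointwise. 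An equivalent route would be to apply the weak maximum principle for the $p$-Laplacian on the superlevel set $\{w_\eps > 1\}$, which by radial monotonicity and decay at infinity is a bounded ball if nonempty, and which again yields a contradiction.

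For the $L^s$ bound with $s>p^*$, the pointwise inequality $0 \le w_\eps \le 1$ just proved gives $w_\eps^s \le w_\eps^{p^*}$, hence
$$\|w_\eps\|_s^s \le \|w_\eps\|_{p^*}^{p^*}.$$
It remains to bound $\|w_\eps\|_{p^*}$ uniformly in $\eps$, which is immediate from the Sobolev embedding $\|w_\eps\|_{p^*}^p \le S_*^{-1}\|\nabla w_\eps\|_p^p = S_*^{-1} S_\eps$ combined with the convergence $S_\eps \to S_0$ furnished by Lemma \ref{24}. No serious obstacle is expected here; the only subtlety worth flagging is that the choice of truncation \eqref{p21} is tailored precisely so that the $(w_\eps - 1)_+$ testing argument closes through a sign cancellation, a device that would not work for the untruncated nonlinearity in the supercritical range $q>p^*$.
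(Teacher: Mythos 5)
Your proposal is correct and takes essentially the same route as the paper: the paper obtains $\|w_\eps\|_\infty=\|u_\eps\|_\infty\le 1$ directly from the construction via the rescaling \eqref{p27} and the truncation \eqref{p21} (the maximum-principle fact you make explicit by testing with $(w_\eps-1)_+$), and then deduces $\|w_\eps\|_s\lesssim 1$ exactly as you do, from $w_\eps\le 1$, Sobolev's inequality $\|w_\eps\|_{p^*}^p\le S_*^{-1}\|\nabla w_\eps\|_p^p=S_*^{-1}S_\eps$ and the convergence $S_\eps\to S_0$ of Lemma \ref{24}. The only difference is that you spell out the $L^\infty$ step which the paper quotes from its construction, so there is nothing to correct.
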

\begin{proof}
	Note that by \eqref{p27} we have
	$$
	||w_{\varepsilon}||_{\infty} =||u_{\varepsilon}||_{\infty}\leq 1.
	$$
	By Sobolev's inequality and Lemma \ref{24} we have
	$$  ||w_{\varepsilon}||^{p}_{p^{*}}\leq S^{-1}_{*}||\nabla w_{\varepsilon}||^{p}_{p}=S^{-1}_{*} S_{\varepsilon}=S^{-1}_{*} S_{0}\big(1+o(1)\big).$$
	Hence for every $s > p^{*}$,
	$$||w_{\varepsilon}||^{s}_{s}\leq ||w_{\varepsilon}||_{p^{*}}^{p^{*}},$$
	which concludes the proof.
\end{proof}

\begin{lemma}\label{326}
	For all $1<p<N$, we have
	$$\varepsilon ||w_{\varepsilon}||^{p}_{p}\rightarrow 0\quad\text{as} \quad\eps\to 0.$$
\end{lemma}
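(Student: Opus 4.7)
The plan is to compare $w_\varepsilon$ directly with the limit constrained minimization problem defining $S_0$, and then to use the convergence $S_\varepsilon\to S_0$ provided by Lemma \ref{24}. The starting observation is that, by Lemma \ref{325}, $\|w_\varepsilon\|_\infty\le 1$, so pointwise on the range of $w_\varepsilon$ the truncated nonlinearities $\tilde F_\varepsilon$ and $\tilde F_0$ (which coincide with the untruncated $F_\varepsilon$ and $F_0$ on $[0,1]$) are related by
\[
\tilde F_0(w_\varepsilon(x)) \;=\; \tilde F_\varepsilon(w_\varepsilon(x)) \;+\; \tfrac{\varepsilon}{p}\,w_\varepsilon(x)^{p}.
\]

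Integrating and using the constraint $p^{*}\!\int\tilde F_\varepsilon(w_\varepsilon)\,dx=1$ from \eqref{e2.4}, I would obtain
\[
p^{*}\!\int_{\mathbb{R}^{N}}\tilde F_0(w_\varepsilon)\,dx \;=\; 1 + \tfrac{p^{*}\varepsilon}{p}\|w_\varepsilon\|_{p}^{p} \;>\; 0.
\]
In particular $w_\varepsilon\in\mathcal M_0$, so the variational characterization \eqref{S0Rw} yields $\mathcal S_0(w_\varepsilon)\ge S_0$. Since $\|\nabla w_\varepsilon\|_{p}^{p}=S_\varepsilon$ (as $w_\varepsilon$ is a minimizer for $S_\varepsilon$), this rearranges to
\[
S_\varepsilon \;\ge\; S_0\bigl(1 + \tfrac{p^{*}\varepsilon}{p}\|w_\varepsilon\|_{p}^{p}\bigr)^{(N-p)/N},
\]
and hence
\[
\tfrac{p^{*}\varepsilon}{p}\|w_\varepsilon\|_{p}^{p} \;\le\; \bigl(S_\varepsilon/S_0\bigr)^{N/(N-p)} - 1.
\]

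Lemma \ref{24} then closes the argument, since $S_\varepsilon\to S_0$ forces the right-hand side to $0$ as $\varepsilon\to 0$. This is exactly the analogue in the supercritical setting of the comparison carried out in Lemma \ref{lem2.3.5} for the critical case; no genuine obstacle arises beyond the bookkeeping needed to pass between $\tilde F_\varepsilon$ and $\tilde F_0$, which is precisely where the sup-norm bound $\|w_\varepsilon\|_\infty\le 1$ from Lemma \ref{325} is essential. If one were worried about $w_\varepsilon\in D^{1,p}(\mathbb{R}^{N})$ being available as a test function for \eqref{991}, this is immediate from $\|\nabla w_\varepsilon\|_p^p=S_\varepsilon$ finite together with the exponential decay \eqref{exp-dec}.
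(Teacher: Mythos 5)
Your proof is correct and follows essentially the same route as the paper: both use the constraint identity $p^{*}\!\int \tilde F_0(w_\varepsilon)\,dx = 1+\tfrac{p^{*}\varepsilon}{p}\|w_\varepsilon\|_p^p$ together with $S_0\le \mathcal S_0(w_\varepsilon)$, $\|\nabla w_\varepsilon\|_p^p=S_\varepsilon$, and Lemma \ref{24}. The only cosmetic difference is that you rearrange the inequality into the explicit bound $\tfrac{p^{*}\varepsilon}{p}\|w_\varepsilon\|_p^p\le (S_\varepsilon/S_0)^{N/(N-p)}-1$, whereas the paper concludes by a limsup contradiction argument; your attention to the truncation issue via $\|w_\varepsilon\|_\infty\le 1$ is a correct (and implicitly needed) detail.
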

\begin{proof}
	Observing that $w_{\varepsilon} $ is an optimiser to \eqref{e2.4}, it follows that
	\begin{equation}\label{33}
	1= p^{*} \int\limits_{\mathbb{R}^{N}}F_{\varepsilon}(w_{\varepsilon}) dx = p^{*} \int\limits_{\mathbb{R}^{N}}F_{0}(w_{\varepsilon})-p^{*}\frac{\varepsilon}{p}||w_{\varepsilon} ||^{p}_{p} .
	\end{equation}
	Hence
	$$ \mathcal{S}_{0}(w_{\varepsilon})= \frac{\int\limits_{\mathbb{R^{N}}}|\nabla w_{\varepsilon}|^{p}dx }{\Big(p^{*}\int\limits_{\mathbb{R^{N}}}F_{0}(w_{\varepsilon}) dx\Big)^{(N-p)/N} }= \frac{S_{\varepsilon} }{\Big(1+\frac{p^{*}}{p} \varepsilon ||w_{\varepsilon}||_{p}^{p}\Big)^{(N-p)/N} }.$$
If by contradiction we had $\limsup_{\varepsilon\rightarrow 0} \varepsilon||w_{\varepsilon}||^{p}_{p}=m>0,$
	then by Lemma \ref{24} for any sequence $ \varepsilon_{n}\rightarrow 0$, we would obtain
	$$S_{0}\leq \mathcal{S}_{0}(w_{\varepsilon_{n}})= \frac{S_{\varepsilon_{n}} }{\Big(1+\frac{p^{*}}{p} \varepsilon_{n} ||w_{\varepsilon_{n}}||_{p}^{p}\Big)^{(N-p)/N} }\leq\frac{S_{0}\big(1+o(1)\big) }{1+\frac{p^{*}}{p} m}<S_{0},$$
	and this, as it is clearly a contradiction, concludes the proof.
\end{proof}

 \begin{theorem}\label{246}
Let $1<p<N$ and $q> p^{*} $. As $\varepsilon\rightarrow 0, $
	the family of groundstates $ u_{\varepsilon}$ converges to a groundstate $ u_{0}$ in $D^{1,p}(\mathbb{R}^{N}) $, $L^{l} (\mathbb{R}^{N})$ and $ C_{loc}^{1,\alpha }(\mathbb{R}^{N})$ to \eqref{2s}. In particular
	$$u_{\varepsilon}(0) \simeq u_{0}(0).$$ Furthermore $u_0$ is fast decaying, namely \begin{equation*}
u_{0}(x)\sim |x|^{-\frac{N-p}{p-1}} \quad \text{as}\quad |x| \rightarrow \infty.
\end{equation*}	
	\end{theorem}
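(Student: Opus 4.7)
The plan is to work throughout with the family of constrained minimizers $w_\varepsilon$ for the problem $(S_\varepsilon)$ from Section~\ref{3.1}: Lemma~\ref{24} gives $S_\varepsilon\to S_0>0$, and the rescaling $u_\varepsilon(x)=w_\varepsilon(x/\sqrt[p]{S_\varepsilon})$ converts every convergence statement for $w_\varepsilon$ into an equivalent one for $u_\varepsilon$. Accordingly, the limiting groundstate will be $u_0(x):=w_0(x/\sqrt[p]{S_0})$, where $w_0$ is a radial minimizer of the limit problem $(S_0)$ of Section~\ref{PDEsection}. Once $u_0$ is identified as a positive radial groundstate of $(P_0)$, the decay $u_0(x)\sim|x|^{-(N-p)/(p-1)}$ is automatic from Lemma~\ref{decaylemma}.

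First I would extract a weak limit. From $\|\nabla w_\varepsilon\|_p^p=S_\varepsilon\to S_0$ the family $(w_\varepsilon)$ is bounded in $D^{1,p}(\mathbb{R}^N)$, and by construction each $w_\varepsilon$ is positive, radial and monotone decreasing. Up to subsequences, $w_\varepsilon\rightharpoonup w_0$ weakly in $D^{1,p}$, pointwise a.e., and compactly in $L^s_{loc}$ for $s<p^*$. To upgrade this to global $L^s$ convergence at supercritical exponents I would combine the uniform bound $\|w_\varepsilon\|_\infty\le 1$ of Lemma~\ref{325} with the radial Strauss/Ni decay $w_\varepsilon(x)\lesssim|x|^{-N/p^*}$ (uniform in $\varepsilon$, from radial symmetry and the $L^{p^*}$ bound) and interpolate against the uniform $L^s$ bounds of Lemma~\ref{325}; this yields $w_\varepsilon\to w_0$ in $L^s(\mathbb{R}^N)$ for every $s>p^*$, in particular in $L^q$ and $L^l$.

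The core step is to identify $w_0$ as a non-trivial minimizer of $(S_0)$. Passing to the limit in the constraint $p^*\int_{\mathbb{R}^N}\tilde{F}_\varepsilon(w_\varepsilon)\,dx=1$, the contribution $(\varepsilon p^*/p)\|w_\varepsilon\|_p^p$ vanishes by Lemma~\ref{326}, while the $L^q$ and $L^l$ convergences handle the rest and produce $p^*\int_{\mathbb{R}^N}\tilde{F}_0(w_0)\,dx=1$. In particular $w_0\not\equiv 0$ and $w_0\in\mathcal{M}_0$; weak lower semicontinuity together with Lemma~\ref{24} then give
\[
S_0\le\mathcal{S}_0(w_0)\le\liminf_{\varepsilon\to 0}\|\nabla w_\varepsilon\|_p^p=\lim_{\varepsilon\to 0}S_\varepsilon=S_0,
\]
so $w_0$ is a minimizer of $(S_0)$. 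The Lagrange-multiplier argument of Section~\ref{3.1} then shows that $u_0=w_0(\cdot/\sqrt[p]{S_0})$ is a groundstate of $(P_0)$, and the convergence of the $L^p$ norms of the gradients combined with uniform convexity of $L^p$ promotes the weak convergence to $w_\varepsilon\to w_0$ strongly in $D^{1,p}(\mathbb{R}^N)$.

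Finally, the right-hand side $\tilde{f}_\varepsilon(w_\varepsilon)$ is uniformly bounded in $L^\infty(\mathbb{R}^N)$, so DiBenedetto's estimates give uniform $C^{1,\alpha}_{loc}$ bounds, and Arzel\`a--Ascoli combined with the already established convergences yields $w_\varepsilon\to w_0$ in $C^{1,\alpha}_{loc}(\mathbb{R}^N)$. Since $u_\varepsilon$ and $u_0$ attain their maxima at the origin by radial monotonicity and $S_\varepsilon\to S_0$, we obtain $u_\varepsilon(0)=w_\varepsilon(0)\to w_0(0)=u_0(0)>0$, i.e.\ $u_\varepsilon(0)\simeq u_0(0)$, and a routine subsequence argument upgrades all of the above from subsequences to the full family. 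The main obstacle will be excluding loss of compactness of $(w_\varepsilon)$ in the homogeneous space $D^{1,p}$: a priori $w_0$ could be trivial through vanishing at infinity or through concentration into a rescaled Talenti bubble. Both scenarios are ruled out by combining the radial decreasing structure, the uniform $L^\infty$ bound from Lemma~\ref{325}, and the crucial smallness $\varepsilon\|w_\varepsilon\|_p^p\to 0$ of Lemma~\ref{326}, which together pin the mass of $w_\varepsilon$ to a bounded scale determined by the supercritical nonlinearity $-w^{q-1}+w^{l-1}$.
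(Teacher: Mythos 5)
Your argument is correct and follows the same overall strategy as the paper: work with the constrained minimizers $w_\eps$ of \eqref{e2.4}, use Lemma \ref{24} to get $S_\eps\to S_0$, extract a weak radial limit, pass to the limit in the constraint via Lemma \ref{326} together with the global $L^s$-convergence ($s>p^*$) coming from the uniform radial decay and local compactness, conclude by weak lower semicontinuity that the limit is a minimizer of \eqref{991}, and finish with DiBenedetto estimates, Arzel\`a--Ascoli and Lemma \ref{decaylemma} for the $C^{1,\alpha}_{loc}$ convergence, the value at the origin, and the fast decay. The one place where you genuinely diverge is the upgrade from weak to strong convergence in $D^{1,p}(\mathbb{R}^{N})$: the paper devotes roughly half of its proof to establishing a.e. convergence of the gradients through the truncation--cut-off argument of \cite{Mercuri} (the map $T$ and the cut-off $\rho$, exploiting the monotonicity of the $p$-Laplacian) and then invokes the Brezis--Lieb lemma, whereas you observe that $\|\nabla w_\eps\|_p^p=S_\eps\to S_0=\|\nabla \tilde w\|_p^p$ and apply the Radon--Riesz property of the uniformly convex space $L^p(\mathbb{R}^N;\mathbb{R}^N)$. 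Since the limit of the gradient norms is exactly the norm of the limit (the constraint forces $\|\nabla\tilde w\|_p^p\geq S_0$), your shortcut is valid and noticeably shorter; the paper's route is heavier but yields in addition the pointwise a.e. convergence of $\nabla w_\eps$, which the authors import from their global-compactness toolkit. One shared caveat: your ``routine subsequence argument'' to pass from subsequences to the whole family implicitly presupposes uniqueness of the limiting minimizer, which is not known for general $p$; the paper's own proof has exactly the same feature, so this is not a gap relative to the paper, but it should not be advertised as routine.
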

\begin{proof}
	Since the family $w_{\varepsilon}$ is bounded in $D^{1,p}(\mathbb{R}^{N}) $ then there exists a subsequence ${w_{\varepsilon_{n}}} $ such that
	$$w_{\varepsilon_{n}}\rightharpoonup \tilde{w} \quad  \text{in} \quad D^{1,p}(\mathbb{R}^{N})\quad  \text{and}  \quad w_{\varepsilon_{n}}\rightarrow \tilde{w} \quad \text{a.e \; in } \; \mathbb{R}^{N}, \quad \text{as}  \; n\rightarrow \infty $$
	where $ \tilde{w} \in D^{1,p}(\mathbb{R}^{N})$ is a radial function. By Sobolev's inequality,
	the sequence $(w_{\varepsilon_{n}} )$ is bounded in $ L^{p^{*}}(\mathbb{R}^{N})$. Using Lemma \ref{41} we conclude that
	$$
	w_{\varepsilon_{n}}\rightarrow \tilde{w} \quad \text{in} \quad L^{s}(\mathbb{R}^{N}\setminus B_{r}(0)) \quad \text{for}\; r>0 \;\text{and}\; s\in (p^{*},\infty).$$
	\noindent Taking into account Lemma \ref{326} and \eqref{33} we also obtain
	$$
	\int\limits_{\mathbb{R}^{N}}F_{0}(\tilde{w}) dx = \lim\limits_{n\rightarrow \infty} \int\limits_{\mathbb{R}^{N}}F_{0}(w_{\varepsilon_{n}})dx=\lim\limits_{n\rightarrow \infty}(1+p^{*}\frac{\varepsilon_{n}}{p}||w_{\varepsilon_{n}} ||^{p}_{p})=1 .$$
	By the weak lower semicontinuity property of the norm we also have that
	$$||\nabla \tilde{w}||_{p}^{p}\leq \liminf_{n\longrightarrow \infty }||\nabla w_{\varepsilon_{n}}||_{p}^{p}=S_{0},$$
	i.e. $\tilde{w} $ is a minimizer for \eqref{991}.
	We now claim that
	\begin{equation} \label{339}
	\nabla w_{\varepsilon_{n}} \rightarrow \nabla \tilde{w} \quad a.e.\quad  \text{on}\; \mathbb{R}^{N},
	\end{equation}
	and then by Brezis-Lieb Lemma \cite{Brezis}, $ (w_{\varepsilon_{n}})$ converges strongly to $\tilde{w} $ in $ D^{1,p}(\mathbb{R}^{N})$. In fact, arguing as in \cite{Mercuri}*{Theorem $3.3$}
	 (see also \cite{MercuriSq}*{Proposition $2.3$}, define a bounded function	
	 $$T:=\begin{cases}
	s \quad&  \text{if} \quad |s|\leq 1,\\
	\frac{s}{|s|}&\text{if} \quad |s|> 1,
	\end{cases} $$
	and consider a sequence $(B_{k})$ of open subsets of $\mathbb{R}^{N} $ such that $\bigcup\limits_{k=1}^{\infty}B_{k}= \mathbb{R}^{N} $.
	Then if
	\begin{equation}\label{933}
	\lim\limits_{n\rightarrow\infty} \int\limits_{B_{k}}(|\nabla w_{\varepsilon_{n}}|^{p-2}\nabla w_{\varepsilon_{n}}-|\nabla \tilde{w}|^{p-2}\nabla \tilde{w})\cdot \nabla T (w_{\varepsilon_{n}}-\tilde{w}) dx\rightarrow 0,
	\end{equation}
	for every $ k$, then
	$$\nabla w_{\varepsilon_{n}} \rightarrow \nabla \tilde{w} \quad a.e.\quad  \text{on}\;B_{k},$$ and hence by a Cantor diagonal argument, \eqref{339} is satisfied.
	
	\noindent To show \eqref{933}, we introduce a cut-off function
	$$\rho(x):=\begin{cases}
	1 \quad&  \text{if} \quad |x|\leq k,\\
	0 &\text{if} \quad |x|\geq k+1,
	\end{cases} $$
	and since $$\big(|\nabla w_{\varepsilon_{n}}|^{p-2}\nabla w_{\varepsilon_{n}}-|\nabla \tilde{w}|^{p-2}\nabla \tilde{w}\big) \nabla T (w_{\varepsilon_{n}}-\tilde{w}) \geq 0, $$
	then
	\begin{eqnarray*}
		0&\leq& 	 \int\limits_{B_{k}}\big(|\nabla w_{\varepsilon_{n}}|^{p-2}\nabla w_{\varepsilon_{n}}-|\nabla \tilde{w}|^{p-2}\nabla \tilde{w}\big) \nabla T (w_{\varepsilon_{n}}-\tilde{w}) dx  \\
		& \leq &	 \int\limits_{B_{k+1}}\Big[\big(|\nabla w_{\varepsilon_{n}}|^{p-2}\nabla w_{\varepsilon_{n}}-|\nabla \tilde{w}|^{p-2}\nabla \tilde{w}\big) \nabla T (w_{\varepsilon_{n}}-\tilde{w})\Big] \rho(x) dx  \\
		& \leq &	 \Big|\int\limits_{B_{k+1}}\big(|\nabla w_{\varepsilon_{n}}|^{p-2}\nabla w_{\varepsilon_{n}}-|\nabla \tilde{w}|^{p-2}\nabla \tilde{w}\big) \nabla \big(\rho T (w_{\varepsilon_{n}}-\tilde{w})\big) dx \Big|  \\
		& +	& \Big|\int\limits_{B_{k+1}}\big(|\nabla w_{\varepsilon_{n}}|^{p-2}\nabla w_{\varepsilon_{n}}-|\nabla \tilde{w}|^{p-2}\nabla \tilde{w}\big)  T (w_{\varepsilon_{n}}-\tilde{w}) \nabla\rho dx \Big| \rightarrow 0,
	\end{eqnarray*}
	as $n\rightarrow\infty.$
In fact
	$$  \Big|\int\limits_{B_{k+1}}\big(|\nabla w_{\varepsilon_{n}}|^{p-2}\nabla w_{\varepsilon_{n}}-|\nabla w_{0}|^{p-2}\nabla \tilde{w}\big) \nabla \big(\rho T (w_{\varepsilon_{n}}-\tilde{w})\big) dx \Big| $$
	$$ \leq \Big|\int\limits_{\mathbb{R}^{N}}|\nabla w_{\varepsilon_{n}}|^{p-2}\nabla w_{\varepsilon_{n}}\nabla \big(\rho T (w_{\varepsilon_{n}}-\tilde{w})\big) dx\Big| +\Big|\int\limits_{\mathbb{R}^{N}}|\nabla \tilde{w}|^{p-2}\nabla \tilde{w} \nabla \big(\rho T (w_{\varepsilon_{n}}-\tilde{w})\big) dx \Big|$$
	$$ = \Big|\int\limits_{\mathbb{R}^{N}}f_{\varepsilon}(w_{\varepsilon_{n}}) \rho T (w_{\varepsilon_{n}}-\tilde{w}) dx\Big| +\Big|\int\limits_{\mathbb{R}^{N}}f(\tilde{w}) \rho T (w_{\varepsilon_{n}}-\tilde{w}) dx \Big|\rightarrow 0,$$
	by local compactness.
	Moreover, by H\"{o}lder's inequality  and since $T$ is bounded and $w_{\varepsilon_{n}}-\tilde{w}\rightarrow 0$ a.e. on $\mathbb{R}^{N} $, then by dominated convergence theorem, we have
	\begin{multline*}
	$$\Big|\int\limits_{B_{k+1}}(|\nabla w_{\varepsilon_{n}}|^{p-2}\nabla w_{\varepsilon_{n}}-|\nabla \tilde{w}|^{p-2}\nabla \tilde{w})T (w_{\varepsilon_{n}}-\tilde{w})\nabla \rho dx \Big|  \\
	\leq C \Big(\int\limits_{B_{k+1}} |T (w_{\varepsilon_{n}}-\tilde{w})|^{p} |\nabla \rho|^{p} dx \Big)^{\frac{1}{p}} \rightarrow 0,$$
	\end{multline*}
	and hence \eqref{933} follows.
	As a consequence $ (w_{\varepsilon_{n}})$ converges to $ \tilde{w}$ in $ D^{1,p}(\mathbb{R}^{N})$ and in $ L^{s}(\mathbb{R}^{N})$ for any $ s\geq p^{*}$, where $ \tilde{w}$ is a minimizer of \eqref{991} satisfying the constraint.  Similarly to the proof of Corollary \ref{3121}, using Lemma \ref{325}, by uniform elliptic estimates we conclude that $(w_{\varepsilon_{n}})$ converges to $\tilde{w}_{0}$ in $C_{loc}^{1,\alpha}(\mathbb{R}^{N})$. The decay follows from Lemma \ref{decaylemma}. This concludes the proof. \end{proof}
\begin{proof}[Proof of Theorem \ref{ss122}] The statement follows directly from Theorem \ref{246} and Lemma \ref{326}.
\end{proof}

\section{Proof of Theorem \ref{s121}: subcritical case $p<q<p^{*}$   }\label{5}

In this section, we consider the subcritical case $p<q<p^*$ and prove Theorem \ref{s121} showing that, after the canonical rescaling \eqref{03s}, the groundstate solutions $ u_{\varepsilon}$ converge as $\varepsilon\rightarrow 0 $ to the unique non-trivial radial groundstate solution  to the limit equation  \eqref{4s}.
This result extends \cite{Moroz-Muratov}*{Theorem $ 2.1$} to $p\neq2$.

\noindent Since by Poho\v{z}aev's identity the equation \eqref{29} has no positive finite energy solutions, to understand the asymptotic behaviour of the groundstates $u_{\varepsilon}$ we consider the rescaling in \eqref{03s}, which transforms \eqref{1s} into \eqref{3s}, whose limit problem as $\varepsilon \rightarrow 0$ is \eqref{4s}.\\
\noindent Pick $G_{\varepsilon}:\mathbb{R}\rightarrow\mathbb{R}, $ a bounded truncated function such that
\begin{equation*}
G_{\varepsilon}(w)=
\frac{1}{q}|w|^{q}-\frac{1}{p}|w|^{p} -\frac{\varepsilon^{\frac{l-q}{q-p}}}{l}|w|^{l},
\end{equation*}
for $ 0<w\leq \varepsilon^{-\frac{1}{q-p}}    $, $G_{\varepsilon}(w)\leq0$ for $w>\varepsilon^{-\frac{1}{q-p}}$ and $G_{\varepsilon}(w)=0$ for $w\leq0$.
For $ \varepsilon \in [0,\varepsilon^{*}) $, we set
\begin{equation}\tag{$S'_{\varepsilon}$}\label{5p}
 S'_{\varepsilon}:= \inf \Big\{\int\limits_{\mathbb{R}^{N}}|\nabla w|^{p}dx\;\Big| \; w\in W^{1,p}(\mathbb{R}^{N}), \quad p^{*}\int\limits_{\mathbb{R}^{N}} G_{\varepsilon}(w)dx=1 \Big\},
 \end{equation}
a well-defined family of constrained minimisation problems, which share, together with the limit problem $ (S'_{0})$, the same functional setting $W^{1,p}(\mathbb{R}^{N}) $.
By Theorem \ref{th3.2.9}, \eqref{5p} possesses a radial positive minimizer $ w_{\varepsilon}$ for every $ \varepsilon \in [0,\varepsilon^{*}) $.
The rescaled function
$$v_{\varepsilon}(x):=w_{\varepsilon}\big(\frac{x}{\sqrt[p]{S'_{\varepsilon}}}\big), $$
is a radial groundstate of  \eqref{3s}.

\noindent We estimate \eqref{5p} by means of the dilation invariant representation
$$ \mathcal{S}'_{\varepsilon}(w):= \frac{\int\limits_{\mathbb{R^{N}}}|\nabla w|^{p}dx }{\Big(p^{*}\int\limits_{\mathbb{R^{N}}}G_{\varepsilon}(w) dx\Big)^{(N-p)/N} }, \quad w\in \mathcal{M}'_{\varepsilon} , $$
where $\mathcal{M}'_{\varepsilon}:=\{0 \leq w \in W^{1,p}(\mathbb{R}^{N}),\int\limits_{\mathbb{R^{N}}}G_{\varepsilon}(w) dx>0\}.$ We have
$$S'_{\varepsilon} =\displaystyle\inf_{w\in \mathcal{M}'_{\varepsilon} } \mathcal{S}'_{\varepsilon}(w), $$
and for $\varepsilon$ small enough we have
\begin{equation}
S'_{0}\leq \mathcal{S}'_{0}(w_{\varepsilon})< \mathcal{S}'_{\varepsilon}(w_{\varepsilon})=S'_{\varepsilon}.
\end{equation}
This follows by observing that as $p^{*}\int\limits_{\mathbb{R}^{N}}G_{\varepsilon}(w_{\varepsilon}) dx=1$ and $ G_{\varepsilon}(s)$ is a decreasing function of $\varepsilon $ for each $ s>0$, we have $w_{\varepsilon}\in \mathcal{M}'_{0} $, and the second inequality follows again by monotonicity. Observe that by continuity $w_{0}\in \mathcal{M}'_{\varepsilon} $ for sufficiently small $\varepsilon $. As a consequence, by testing\eqref{5p} with $w_0$, that for $\varepsilon $ small enough, we have that 
$$ S'_{\varepsilon}\leq \mathcal{S}'_{\varepsilon}(w_{0})= \frac{S'_{0} }{\Big(1-\frac{p^{*}}{l}\varepsilon^{\frac{l-q}{q-p}}||w_{0}||_{l}^{l}\Big)^{(N-p)/N} }\leq S'_{0}+O(\varepsilon^{\frac{l-q}{q-p}}). $$
Hence $S'_{\varepsilon} \rightarrow S'_{0} $.
\noindent Reasoning as in Lemma \ref{314} , we obtain that
$$ ||w_{\varepsilon}||^{q}_{q}=\frac{(l-p^{*})q}{(l-q)p^{*}}+\frac{q(l-p)}{p(l-q)}||w_{\varepsilon}||^{p}_{p}.$$
Inserting this identity into the definition of $S'_{0}(w_{\varepsilon})$ and using the convergence of $S'_{\varepsilon}$ to $S'_{0}$, one can easily check that

\begin{equation}\label{03055}
\lim\limits_{\varepsilon\rightarrow 0}||w_{\varepsilon}||_{p}^{p}= \frac{p(p^{*}-q)}{p^{*}(q-p)}, \quad \lim\limits_{\varepsilon\rightarrow 0} ||w_{\varepsilon} ||_{q}^{q}= \frac{q(p^{*}-p)}{p^{*}(q-p)}.
\end{equation}
Therefore $p^{*}\int\limits_{\mathbb{R}^{N}}G_{0}(w_{\varepsilon}) dx\rightarrow 1 \quad \text{as} \quad \varepsilon\rightarrow 0. $ We have then achieved that a rescaling $\lambda_{\varepsilon}\rightarrow1$ exists such that $p^{*}\int\limits_{\mathbb{R}^{N}}G_{0}(\tilde{w}_{\varepsilon}) dx= 1$ and $ \mathcal{S}'_{\varepsilon}(\tilde{w}_{\varepsilon}) \rightarrow S'_{0}$ with
$\tilde{w}_{\varepsilon}(x):=w_{\varepsilon}(\lambda_{\varepsilon}x). $
It follows that $(\tilde{w}_{\varepsilon}) $ is a minimizing one parameter family for $(S'_{0})$ that satisfies the constraint used in the method which yields Theorem \ref{th3.2.9}. By Theorem \ref{th3.2.9} we conclude that for a suitable sequence $\varepsilon_{n}\rightarrow 0 $, it holds $\tilde{w}_{\varepsilon_{n}}\rightarrow \tilde{w}$ strongly in $W^{1,p}(\mathbb{R}^{N})$, and since $\lambda_{\varepsilon}\rightarrow 1,$ it holds that $w_{\varepsilon_{n}}\rightarrow \tilde{w}$, where $ \tilde{w}$ is the minimizer of $ (S'_{0})$ satisfying the constraint. By the uniqueness  of minimizer of \eqref{4s}, we have $ \tilde{w}=w_{0}.$ 
An obvious modification of the proof of Lemma \ref{13120}, using $||w_{\varepsilon}||_{p^{*}},$ yields that $||w_{\varepsilon}||_{\infty}\lesssim 1$ as $\varepsilon \rightarrow 0$. By uniform elliptic estimates we conclude that $ w_{\varepsilon}$ converges to $w_{0} $ in $ L^{s}(\mathbb{R}^{N})$ for any $ s\geq p$ and in $C^{1,\alpha}_{loc} (\mathbb{R}^{N})$, and therefore
the proof of Theorem \ref{s121} is complete.

\appendix

\section{Radial functions} \label{A.3}

We recall that for $ u \in L^{1}( \mathbb{R}^{N} ) $, the radially decreasing rearrangement of a function $ u $ is denoted by $ u^{*} $ and it is such that for any $ \alpha > 0 $ it holds that
\begin{equation*}
 \big | x\in \R^N:  u(x)^{*}\geq \alpha \big| =  \big| x\in \R^N: |u(x)| \geq \alpha \big|,
\end{equation*}
where $ \big|\cdot \big| $ denotes the Lebesgue measure in $\R^N$. We recall that
\begin{equation*}
\int_{\mathbb{R}^{N}}F(u) dx = \int_{\mathbb{R}^{N}}F(u^{*}) dx,
\end{equation*}
for every continuous $F$ such that $F(u) $ is summable. \newline
The following fundamental properties of rearrangements can be found e.g. in \cite{Willem b}:

\begin{lemma}
Let $ 1 \leq p < \infty $ and $ u,v \in L^{p}(\mathbb{R}^{N}) $. Then $ u^{*} ,v^{*} \in L^{p}(\mathbb{R}^{N}) $ and
$$ ||u^{*}||_{p}=||u||_{p},\qquad ||u^{*}-v^{*}||_{p} \leq ||u-v||_{p}.$$
\end{lemma}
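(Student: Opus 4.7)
The first identity is a direct consequence of the equimeasurability property displayed just above the lemma: taking $F(s):=|s|^p$, which is continuous with $F(0)=0$, the hypothesis $u\in L^p(\R^N)$ yields $F(u)\in L^1(\R^N)$, and hence $||u^*||_p^p=\int_{\R^N} F(u^*)\,dx=\int_{\R^N} F(u)\,dx=||u||_p^p$. No further work is required for this part.

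For the nonexpansivity inequality I plan to use the method of two-point rearrangements (polarizations). Given a closed affine half-space $H\subset\R^N$ with boundary reflection $\sigma_H$, define $u^H(x):=\max\{u(x),u(\sigma_H x)\}$ for $x\in H$ and $u^H(x):=\min\{u(x),u(\sigma_H x)\}$ for $x\notin H$. The key pointwise estimate is
$$|u^H(x)-v^H(x)|^p+|u^H(\sigma_H x)-v^H(\sigma_H x)|^p\leq|u(x)-v(x)|^p+|u(\sigma_H x)-v(\sigma_H x)|^p$$
for a.e.\ $x\in H$. Integrating over $H$ then yields $||u^H-v^H||_p\leq||u-v||_p$. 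By a classical approximation theorem (due to Smets--Willem), one can choose a sequence of iterated polarizations $(u_n)$ of $u$ converging in $L^p(\R^N)$ to $u^*$; applying the same sequence of polarizations to $v$ produces a sequence converging to $v^*$ in $L^p$. Passing to the limit in the $L^p$-inequality yields $||u^*-v^*||_p\leq||u-v||_p$.

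The main obstacle is the pointwise two-point inequality itself. Writing $a:=u(x)$, $b:=u(\sigma_H x)$, $c:=v(x)$, $d:=v(\sigma_H x)$, the claim reduces to the finite-dimensional assertion that replacing $(a,b)$ by $(\max\{a,b\},\min\{a,b\})$ and $(c,d)$ by $(\max\{c,d\},\min\{c,d\})$ does not increase the sum of $p$-th powers of coordinatewise differences. The only nontrivial case occurs when the two pairs are oppositely ordered, say $a\geq b$ while $c<d$; the inequality then reads $|a-d|^p+|b-c|^p\leq|a-c|^p+|b-d|^p$. Setting $t:=b-d$, $\alpha:=a-b\geq 0$, $\gamma:=d-c\geq 0$, and $g(s):=|t+s|^p$, this is equivalent to $g(\alpha)+g(\gamma)\leq g(0)+g(\alpha+\gamma)$, which follows from two standard applications of the convexity of $g$ (itself convex since $p\geq 1$) and summation. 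This is the only genuine computation in the argument; the rest is pure approximation.
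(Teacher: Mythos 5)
The paper itself offers no proof of this lemma: it is quoted from Willem's book \cite{Willem b} as a known fact, so there is no internal argument to compare yours against; the classical textbook route is via the layer-cake representation together with the Hardy--Littlewood inequality (Chiti, Crandall--Tartar, Lieb--Loss), whereas you go through polarizations. Your strategy is legitimate, and the one genuine computation is right: in the oppositely ordered case the two-point inequality does reduce, with $t=b-d$, $\alpha=a-b\ge 0$, $\gamma=d-c\ge 0$, $g(s)=|t+s|^p$, to $g(\alpha)+g(\gamma)\le g(0)+g(\alpha+\gamma)$, which follows from convexity of $g$; the integration over $H$ and the passage to the limit along the nonincreasing distances $\|u_n-v_n\|_p$ are also fine.

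Two points, however, are not yet justified as written. First, the lemma concerns arbitrary $u,v\in L^p$, and with the paper's definition $u^*$ is the rearrangement of $|u|$; iterated polarizations of a sign-changing function cannot converge to $u^*$ (take $u\le 0$, $u\not\equiv 0$: every polarization is again nonpositive, while $u^*\ge 0$ with $\|u^*\|_p=\|u\|_p>0$). You must therefore first reduce to nonnegative functions using the pointwise bound $\bigl||u|-|v|\bigr|\le|u-v|$ and the identity $(|u|)^*=u^*$; this is a one-line fix but it is needed. Second, and more substantively, the claim that ``applying the same sequence of polarizations to $v$ produces a sequence converging to $v^*$'' does not follow from the per-function approximation theorem (Brock--Solynin, and the lemma used by Smets--Willem), in which the half-spaces are chosen in terms of the given function $u$; to run your argument you need the universal approximation theorem (a single sequence of polarizations, independent of the function, symmetrizing every nonnegative $L^p$ function, due to Van Schaftingen), or else a construction of a sequence adapted simultaneously to the pair $(u,v)$. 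With that citation corrected the proof closes, but as stated this step is a real gap in the justification rather than a mere attribution slip.
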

\begin{lemma}
Let $ 1<p<N $ and $ u \in D^{1,p}(\mathbb{R}^{N}) $  (respectively, in $ W^{1,p}(\mathbb{R}^{N})$). Then $ u^{*} $ belongs to $ D^{1,p}(\mathbb{R}^{N})$ (respectively, in $ W^{1,p}(\mathbb{R}^{N}) $), and we have
\begin{equation*}
\int_{\mathbb{R}^{N}}|\nabla u^{*}(x)|^{p}dx\leq \int_{\mathbb{R}^{N}}|\nabla u(x)|^{p} dx.
\end{equation*}
\end{lemma}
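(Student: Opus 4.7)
The plan is to prove the classical Polya--Szeg\"o rearrangement inequality via the co-area formula combined with the Euclidean isoperimetric inequality. Because $u^*=|u|^*$ and rearrangement is continuous in $L^p$, one first reduces to the case of a smooth non-negative function of compact support and then passes to the limit using a density argument in $D^{1,p}(\R^N)$ (respectively $W^{1,p}(\R^N)$) together with the lower semicontinuity of the $L^p$-norm under weak convergence.

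So first I would approximate: given $u\in D^{1,p}(\R^N)$ (or $W^{1,p}(\R^N)$), pick a sequence $u_n\in C_c^\infty(\R^N)$ with $u_n\geq 0$, $u_n\to |u|$ strongly in the relevant Sobolev space. Assuming the inequality is proved for each $u_n$, the continuity $\|u_n^*-|u|^*\|_p\leq\|u_n-|u|\|_p\to 0$ from the previous lemma together with $\|\nabla u_n^*\|_p\leq\|\nabla u_n\|_p$ gives a bound that allows to extract a weak limit of $u_n^*$ in $D^{1,p}$ (or $W^{1,p}$); by uniqueness of the $L^{p^*}$- (or $L^p$-)limit this weak limit must be $u^*$, and weak lower semicontinuity of the $L^p$-norm yields $\|\nabla u^*\|_p\leq\liminf\|\nabla u_n\|_p=\|\nabla u\|_p$.

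The core step is therefore the smooth compactly supported case. Here I would use the layer-cake formula and the co-area formula to write, for $u\in C_c^\infty(\R^N)$ with $u\ge 0$,
\[
\int_{\R^N}|\nabla u|^p\,dx=\int_0^\infty\!\!\int_{\{u=t\}}|\nabla u|^{p-1}\,d\mathcal{H}^{N-1}\,dt,
\]
and similarly for $u^*$. Denote the distribution function $\mu(t):=|\{u>t\}|=|\{u^*>t\}|$, which is the same for $u$ and $u^*$ by equimeasurability, and recall the standard identity $-\mu'(t)=\int_{\{u=t\}}|\nabla u|^{-1}\,d\mathcal{H}^{N-1}$ (finite for a.e.\ $t$ by Sard's theorem). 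By H\"older's inequality with exponents $p$ and $p/(p-1)$,
\[
\mathcal{H}^{N-1}(\{u=t\})^{p}\leq\Bigl(\int_{\{u=t\}}|\nabla u|^{p-1}\,d\mathcal{H}^{N-1}\Bigr)\Bigl(\int_{\{u=t\}}\frac{d\mathcal{H}^{N-1}}{|\nabla u|}\Bigr)^{p-1},
\]
so that
\[
\int_{\{u=t\}}|\nabla u|^{p-1}\,d\mathcal{H}^{N-1}\geq\frac{\mathcal{H}^{N-1}(\{u=t\})^p}{(-\mu'(t))^{p-1}}.
\]
Now the Euclidean isoperimetric inequality yields
\[
\mathcal{H}^{N-1}(\{u=t\})\geq\mathcal{H}^{N-1}(\{u^*=t\})=N\omega_N^{1/N}\mu(t)^{(N-1)/N},
\]
with equality for the radially symmetric rearrangement $u^*$. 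Since $u^*$ is radial and strictly decreasing on its support, $|\nabla u^*|$ is constant on each level set, which turns the H\"older estimate above into equality for $u^*$. Combining these ingredients and integrating in $t$ yields $\int_{\R^N}|\nabla u|^p\,dx\geq\int_{\R^N}|\nabla u^*|^p\,dx$.

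The main obstacle is the usual one in Polya--Szeg\"o: handling critical values where $|\nabla u|$ vanishes on a set of positive measure (so that $1/|\nabla u|$ is not integrable over $\{u=t\}$). In the smooth case this is circumvented by noticing, via Sard's theorem, that critical values form a null set in $t$, and by a short argument showing that the contribution of the set $\{|\nabla u|=0\}$ vanishes in the co-area identity. The remaining passage from smooth compactly supported functions to $D^{1,p}(\R^N)$ (where the inclusion in $L^{p^*}$ replaces the $L^p$ information used in $W^{1,p}$) is standard via the continuity of rearrangement in $L^{p^*}$, Sobolev's embedding $D^{1,p}\hookrightarrow L^{p^*}$, and weak compactness in $D^{1,p}$.
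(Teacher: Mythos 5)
Your argument is correct in outline, but it is worth noting that the paper does not prove this lemma at all: it is quoted as a classical fact (the P\'olya--Szeg\H{o} inequality) with a reference to Willem's book, so any proof you give is necessarily ``different'' from the paper's treatment. What you reconstruct is precisely the standard literature proof (Talenti-style): reduce to nonnegative smooth compactly supported functions by density, use the non-expansivity of rearrangement in $L^p$ (or, in the $D^{1,p}$ case, continuity in $L^{p^*}$ via the Sobolev embedding) together with the uniform gradient bound to pass to a weak limit which must be $u^*$, and conclude by weak lower semicontinuity; then, in the smooth case, combine the co-area formula, H\"older on each level set, and the isoperimetric inequality. This buys an actual self-contained argument where the paper only cites. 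Two technical points in your core step deserve more care than your sketch suggests if this were to be written out in full: first, the identity $-\mu'(t)=\int_{\{u=t\}}|\nabla u|^{-1}\,d\mathcal H^{N-1}$ is in general only an inequality $-\mu'(t)\ge\int_{\{u=t,\,\nabla u\neq0\}}|\nabla u|^{-1}\,d\mathcal H^{N-1}$ for a.e.\ $t$ (the set $\{\nabla u=0\}$ can contribute to $\mu$); fortunately this inequality goes in the favourable direction for your H\"older estimate, so the argument survives, but it should be stated that way. Second, $u^*$ need not be smooth nor strictly decreasing (plateaus of $u$ produce jumps of $\mu$ and flat pieces of $u^*$), so one cannot simply apply the same co-area computation to $u^*$ and claim equality throughout; the clean route is to bound $\int|\nabla u^*|^p$ from above by the expression $\int_0^\infty\bigl(N\omega_N^{1/N}\mu(t)^{(N-1)/N}\bigr)^p(-\mu'(t))^{1-p}\,dt$ using only the absolutely continuous part of $\mu$, which suffices for the inequality. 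With these caveats made explicit, your approximation step and the weak-compactness conclusion in $D^{1,p}$, respectively $W^{1,p}$, are sound.
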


We will be frequently using the following well-known decay and compactness properties of radial functions on $\mathbb{R}^{N}$.

\begin{lemma}[\cite{Jiabao }]\label{4.5.}
Assume that $ 1 < p < N $. Then there exists $ C=C(N,p) > 0 $ such that for all $ u\in D_{r}^{1,p}(\mathbb{R}^{N}) $,
\begin{equation}
|u(x)|\leq C|x|^{-\frac{N-p}{p}}||\nabla u||_{L^{p}(\mathbb{R}^{N})}.
\end{equation}
\end{lemma}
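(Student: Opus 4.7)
The plan is to reduce, by density, to smooth compactly supported radial functions and then exploit the fundamental theorem of calculus in the radial variable combined with H\"older's inequality, splitting the integrand so that the weight $s^{N-1}$ coming from the radial Jacobian is absorbed in the $L^p$ factor.

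Concretely, I would fix $u\in C_c^\infty(\mathbb{R}^N)$ radial and write, for $r:=|x|>0$,
\[
u(r)=-\int_r^\infty u'(s)\,ds,
\]
since $u$ vanishes at infinity. Inserting the weight $s^{(N-1)/p}\cdot s^{-(N-1)/p}=1$ and applying H\"older's inequality with conjugate exponents $p$ and $p'=p/(p-1)$ yields
\[
|u(r)|\le \left(\int_r^\infty |u'(s)|^p s^{N-1}\,ds\right)^{\!1/p}\!\left(\int_r^\infty s^{-(N-1)/(p-1)}\,ds\right)^{\!(p-1)/p}.
\]
The first factor is bounded by $\omega_{N-1}^{-1/p}\|\nabla u\|_{L^p(\mathbb{R}^N)}$, using $\int_{\mathbb{R}^N}|\nabla u|^p\,dx=\omega_{N-1}\int_0^\infty|u'(s)|^p s^{N-1}\,ds$ for radial $u$. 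The second factor can be computed explicitly: the integrand is integrable at infinity precisely because $(N-1)/(p-1)>1$, which is the hypothesis $p<N$, and a direct calculation gives
\[
\left(\int_r^\infty s^{-(N-1)/(p-1)}\,ds\right)^{\!(p-1)/p}
=\left(\frac{p-1}{N-p}\right)^{\!(p-1)/p}r^{-(N-p)/p}.
\]

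Multiplying the two estimates produces the desired pointwise bound with an explicit constant $C=C(N,p)$. The final step is extension from smooth radial functions to arbitrary $u\in D_r^{1,p}(\mathbb{R}^N)$ by a standard density argument: approximate $u$ in the $D^{1,p}$-norm by radial smooth functions $u_n$, apply the inequality to each $u_n$, and pass to the limit using a.e.\ convergence of a subsequence (on the one-dimensional radial representative). I do not expect any real obstacle here: the only place the assumption $1<p<N$ is used is to ensure integrability of $s^{-(N-1)/(p-1)}$ at infinity, which is automatic under the stated hypotheses, and this same condition pins down the sharp exponent $-(N-p)/p$.
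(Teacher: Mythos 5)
Your argument is correct, and it is the classical one: the paper does not prove this lemma at all, it simply quotes it from the reference [Su--Wang--Willem], so there is no in-paper proof to compare against. Your computation (fundamental theorem of calculus in the radial variable, insertion of the weight $s^{(N-1)/p}s^{-(N-1)/p}$, H\"older with exponents $p$ and $p'$, and the explicit evaluation of $\int_r^\infty s^{-(N-1)/(p-1)}\,ds$, which converges exactly because $p<N$) reproduces the standard Strauss/Ni-type derivation and gives the sharp exponent $-(N-p)/p$ with an explicit constant; it is also the same spirit as the estimate the authors carry out themselves in Lemma \ref{41} of the Appendix, where the radial representative is controlled via the one-dimensional fundamental theorem of calculus and H\"older. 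The only step worth tightening is the density argument: since $D^{1,p}(\mathbb{R}^N)$ is the completion of $C_c^\infty(\mathbb{R}^N)$ in the gradient norm, an arbitrary approximating sequence of a radial $u$ need not be radial, so you should say how to produce radial approximants --- e.g.\ mollify with a radial kernel (which preserves radial symmetry and converges in the gradient norm) and truncate with a radial cutoff $\eta_R$, controlling the error term $\|u\,\nabla\eta_R\|_{L^p}$ by H\"older together with the Sobolev embedding $D^{1,p}\subset L^{p^*}$ on the annulus $R\le|x|\le 2R$. Once that is in place, passing to the limit via $L^{p^*}$-convergence and a.e.\ convergence of a subsequence (plus continuity of the radial representative away from the origin, which your own H\"older estimate already furnishes) gives the inequality for all $x\neq 0$, as claimed.
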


\begin{lemma}[Compactness of the radial embedding \cite{Jiabao }]\label{1.14}
Let $ 1<p<N $. Then we have the following continuous embedding
\begin{equation}
W_{r}^{1,p}(\mathbb{R}^{N})\hookrightarrow L^{q}(\mathbb{R}^{N})
\end{equation}
for $ p \leq q \leq   p^{*}:=\frac{pN}{N-p}  $ when $ p^{*} <\infty $ and for $ p \leq q < \infty$ when $ p^{*} =\infty $.
Furthermore, the embedding is compact for $ p < q < p^{*} $.
\end{lemma}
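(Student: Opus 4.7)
The plan is to split the statement into the continuous part and the compact part, and to handle the latter by the classical Strauss-type argument: extract a weakly convergent subsequence, obtain local strong convergence by Rellich--Kondrachov, and control the tails by a pointwise decay estimate specific to radial $W^{1,p}$-functions.

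For the continuous embedding I would first recall the standard Sobolev inequality $W^{1,p}(\R^N)\hookrightarrow L^{p^\ast}(\R^N)$, which holds for all (not necessarily radial) functions when $1<p<N$, together with the trivial inclusion $W^{1,p}(\R^N)\subset L^p(\R^N)$. Interpolating between these two endpoints via the elementary inequality $\|u\|_q\le \|u\|_p^{\theta}\|u\|_{p^\ast}^{1-\theta}$ (for the appropriate $\theta\in[0,1]$) yields
\[
\|u\|_q\lesssim \|u\|_{1,p}\qquad (p\le q\le p^\ast),
\]
so the continuous embedding follows; radiality plays no role here.

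For the compact embedding the key ingredient is a pointwise radial decay estimate of Strauss type: for every $u\in W_r^{1,p}(\R^N)$ and every $r\ge 1$,
\[
|u(r)|\le C\,r^{-(N-1)/p}\,\|u\|_{1,p}.
\]
I would derive this by writing, for a smooth radial $u$,
\[
r^{N-1}|u(r)|^p=-\int_r^\infty \tfrac{d}{ds}\bigl(s^{N-1}|u(s)|^p\bigr)\,ds,
\]
expanding the derivative and estimating each of the two resulting integrals by H\"older's inequality in terms of $\|\nabla u\|_p$ and $\|u\|_p$; by density the bound passes to $W_r^{1,p}(\R^N)$. Now let $(u_n)$ be bounded in $W_r^{1,p}(\R^N)$. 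Passing to a subsequence we may assume $u_n\rightharpoonup u$ weakly in $W^{1,p}(\R^N)$. By the classical Rellich--Kondrachov theorem on the ball $B_R$, for every fixed $R$ and every $q<p^\ast$,
\[
u_n\to u\quad\text{strongly in }L^q(B_R).
\]
To control the tails, fix $q\in(p,p^\ast)$; by the radial decay estimate applied to $u_n-u$, which is still bounded in $W_r^{1,p}$,
\[
\int_{|x|>R}|u_n(x)-u(x)|^q\,dx\lesssim \|u_n-u\|_{1,p}^{q-p}\int_{|x|>R} |x|^{-\frac{(N-1)(q-p)}{p}}|u_n(x)-u(x)|^p\,dx,
\]
and since $q>p$ the exponent $\tfrac{(N-1)(q-p)}{p}$ is strictly positive, so the integral over $\{|x|>R\}$ tends to $0$ as $R\to\infty$ uniformly in $n$. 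Combining this tail bound with the local strong convergence gives $u_n\to u$ in $L^q(\R^N)$.

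The main obstacle, and the only place where radiality is essential, is the pointwise decay estimate; once it is in hand, the rest is a routine splitting into a compact core and a small tail. Note that the compactness fails at the endpoints $q=p$ (by translation/scaling invariance arguments applied to concentrating radial bumps near infinity) and $q=p^\ast$ (by scale invariance of the Sobolev norm), which is consistent with the open-interval statement $p<q<p^\ast$.
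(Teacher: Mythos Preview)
Your argument is correct and follows the classical Strauss route: the radial pointwise decay $|u(r)|\lesssim r^{-(N-1)/p}\|u\|_{1,p}$, Rellich--Kondrachov on balls, and a uniform tail bound. One small comment on your derivation of the decay estimate: when you expand $\tfrac{d}{ds}(s^{N-1}|u|^p)$ you get two terms, and the point is that the term $(N-1)s^{N-2}|u|^p$ contributes with a \emph{negative} sign after the fundamental theorem of calculus, so it can simply be dropped; only the cross term $ps^{N-1}|u|^{p-2}u\,u'$ needs H\"older. You might make this explicit.

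As for comparison with the paper: the paper does not prove this lemma at all. It is stated with a citation to Su--Wang--Willem and used as a black box. So your write-up actually supplies more than the paper does; it gives a self-contained proof of a result the authors are content to quote.
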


\begin{lemma}\label{41}
\begin{itemize}
\item[$(1)$] Let $s\geq1$ and let $u \in L^{s}(\mathbb{R}^{N})$  be a radial nonincreasing function. Then for every $x\neq0,$
\begin{equation}\label{p129}
|u(x)|\leq C |x|^{\frac{-N}{s}}||u||_{s},
\end{equation}
where $C=C(s,N)$, see e.g. \cite{Berestycki-Lions}. 
\item[$(2)$]Let $u_{n}\in D^{1,p}(\mathbb{R}^{N})$ be a sequence of radial functions such that $u_{n}\rightharpoonup u$ in $D^{1,p}(\mathbb{R}^{N})$. Then, passing if necessary to a subsequence, it holds that
$$u_{n}\rightarrow u \quad \text{in} \; L^{\infty}(\mathbb{R}^{N}\backslash B_{r}(0)) \quad \text{and}\quad\; L^{s}(\mathbb{R}^{N}\backslash B_{r}(0)) \quad \forall\; r>0, \;s>p^{*}.$$
\end{itemize}
\end{lemma}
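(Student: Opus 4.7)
For part (1) the plan is to use radial monotonicity directly. Since $u$ is nonincreasing in $|\cdot|$, one has $|u(y)|\ge|u(x)|$ for every $y\in B_{|x|}$; integrating $|u|^s$ only over this ball gives $||u||_s^s\ge\int_{B_{|x|}}|u(y)|^s\,dy\ge|u(x)|^s\,|B_{|x|}|$, and the claim follows by solving for $|u(x)|$ and using $|B_{|x|}|=c_N|x|^N$.

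For part (2) I would combine radial pointwise decay with local compactness in three steps. First, the Sobolev embedding $D^{1,p}(\mathbb{R}^N)\hookrightarrow L^{p^*}(\mathbb{R}^N)$ together with weak $D^{1,p}$-convergence ensures that $(u_n)$ and $u$ are uniformly bounded in $D^{1,p}\cap L^{p^*}$, so Lemma \ref{4.5.} applied to each $u_n$ (and to $u$, via weak lower semicontinuity of $||\nabla\cdot||_p$) yields the uniform pointwise envelope
$$|u_n(x)|,\;|u(x)|\;\le\; C\,|x|^{-(N-p)/p},\qquad x\ne 0.$$
Note also that $u$ is itself radial, as the weak limit in $D^{1,p}$ of a sequence of radial functions.

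Second, to obtain uniform convergence $u_n\to u$ on each annulus $A_{r,R}:=B_R\setminus B_r$, I would pass to the one-dimensional radial profile: since $u_n$ is bounded in $W^{1,p}(A_{r,R})$ and radial, the weighted integral $\int_r^R |u_n'(\rho)|^p\rho^{N-1}\,d\rho$ is bounded and, since $\rho^{N-1}$ is bounded above and below on $[r,R]$, the profile $\rho\mapsto u_n(\rho)$ is bounded in $W^{1,p}(r,R)$. The one-dimensional Morrey embedding $W^{1,p}(r,R)\hookrightarrow C^{0,\,1-1/p}[r,R]$, combined with Arzel\`a--Ascoli and a diagonal argument in $r\downarrow 0$, $R\uparrow\infty$, then produces a subsequence converging uniformly on every such annulus; uniqueness of weak limits identifies the limit as $u$.

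Finally, concatenating uniform convergence on $A_{r,R}$ with the tail estimate from the envelope (which makes $u_n-u$ uniformly small for $|x|>R$) upgrades the convergence to $L^\infty(\mathbb{R}^N\setminus B_r)$ for every $r>0$. Moreover, for $s>p^*$ the envelope $|x|^{-(N-p)/p}$ lies in $L^s(\mathbb{R}^N\setminus B_r)$ -- this is exactly the condition $(N-p)s/p>N$ -- so $|u_n-u|^s$ has a fixed integrable majorant and dominated convergence promotes $L^\infty$ convergence to $L^s$ convergence on the same set. The main obstacle is the $L^\infty$ convergence on annuli: Rellich--Kondrachov alone only gives strong $L^q$ convergence for $q<p^*$, which is insufficient here, so the reduction of radial $W^{1,p}$ functions to a one-dimensional Morrey setting is the key technical device.
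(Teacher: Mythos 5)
Your proposal is correct and follows essentially the same route as the paper: part (1) is the standard radial lemma via integration over $B_{|x|}$, and in part (2) your reduction to the one--dimensional radial profile with the Morrey embedding on annuli is exactly the paper's fundamental-theorem-of-calculus plus H\"older estimate giving a uniform $C^{0,1/p'}$ bound on $\mathbb{R}^N\setminus B_r(0)$, followed by Arzel\`a--Ascoli and the Ni-type decay of Lemma \ref{4.5.} to control the tail and upgrade to $L^\infty$ and, for $s>p^*$, to $L^s$ convergence. Your explicit dominated-convergence step with the envelope $|x|^{-(N-p)/p}$ just spells out what the paper compresses into its final sentence.
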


\begin{proof}
Since $(u_n)_{n\in \mathbb N}\in D^{1,p}(\R^N)$ is a radial sequence, setting $f_n(|x|)=u_n(x)$ from the fundamental theorem of calculus and H\"older's inequality for all $|x|>|y|>r>0$ it holds that
\begin{equation*}\label{hold}
|u_n(x)-u_n(y)|\leq \int^{|x|}_{|y|} |f'_n(t)|\textrm{d}t \leq|x-y|^{1/p'} |y|^{(1-N)/p}|\mathbb S^{N-1}|^{-1/p}\|\nabla u_n\|_{L^p(\R^N)}
\end{equation*}
and as a consequence, since $u_n \rightharpoonup u$ is bounded, for all $x,y\in \R^N\setminus B_r(0)$ and a uniform constant $C>0$ we have that

\begin{equation}\label{hold}
|u_n(x)-u_n(y)|\leq  C r^{(1-N)/p}|x-y|^{1/p'}.
\end{equation}
Namely, $(u_n)_{n\in \mathbb N}$ is bounded in $C^{0,1/p'}(\R^N\setminus B_r(0))$ and by the locally compact embedding it is strongly convergent to $u$
in $L^{\infty}_{\textrm{loc}}(\R^N\setminus B_r(0)).$ This and Lemma \ref{4.5.} yield the convergence in $L^{s}(\R^N\setminus B_r(0)).$
\end{proof}

\section{Comparison principle for the $p$-Laplacian}
Let $G\subseteq\R^N$ be a domain.
We say that $ 0\leq v \in W_{loc}^{1,p}(G) $ satisfies condition $(S)$ if:
\begin{itemize}
\item[$(S)$] there exists $(\theta_{n})_{n\in \mathbb{N}}\subset W_{c}^{1,\infty}(\mathbb{R}^{N}) $ such that $0\leq \theta_{n}\rightarrow 1 \; \text{a.e.}\; \text{in}\; \mathbb{R}^{N}$ and
$$ \int\limits_{G} \mathcal{R}(\theta_{n}v,v) \rightarrow 0,\quad \text{as} \quad n\rightarrow +\infty. $$
\end{itemize}
where $\mathcal{R}$ is defined by
\begin{equation}\label{161}
\mathcal{R}(w,v):=\big |\nabla w\big|^{p}-\nabla\Big(\frac{w^{p}}{v^{p-1}}\Big)\big|\nabla v\big|^{p-2}\nabla v.
\end{equation}
Notice that if $G $ is bounded and $v\in W^{1,p}(G) $ then condition $ (S)$ is trivially satisfied with $ \theta=1$ in $ G$.
In case of an unbounded domain $G$, condition $(S)$ ensures that the subsolution $v$ is sufficiently {\em small} at infinity,
in order to respect the comparison principle (see \cite{Liskevich}).  \\
Using condition $ (S)$, we formulate a version of comparison principle for a $p$-Laplacian with a general negative potential (see e.g. \cite{ Shafrir B,Liskevich,Poliakovsky}).
\begin{theorem}[Comparison principle for $p$-Laplacian]\label{A52}
Let $ 0< u\in W_{loc}^{1,p}(G) \cap C(\bar{G})$ be a super-solution and $v\in W_{loc}^{1,p}(G) \cap C(\bar{G})$ a sub-solution to the equation
\begin{equation}\label{2.32}
-\Delta_p u+V|u|^{p-2}u=0\quad\text{in } \;G,
\end{equation}
where $V\in L_{loc}^{\infty}(G)$.
If $ G $ is an unbounded domain, assume in addition that $ \partial G \neq \emptyset$ and $v^{+} $ satisfies condition $ (S) $.
Then $u\geq v $ on $\partial G  $ implies $ u\geq v $ in $G $.
\end{theorem}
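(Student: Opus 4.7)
The approach I will take is the classical Picone-type argument for the $p$-Laplacian, with the remainder $\mathcal{R}$ defined in \eqref{161} playing the central role.

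Assume for contradiction that $\Omega^+ := \{x \in G : v(x) > u(x)\}$ is non-empty. By continuity of $u,v$ up to $\overline{G}$ and the boundary inequality $u \ge v$ on $\partial G$, the open set $\Omega^+$ is disjoint from $\partial G$ and $v = u > 0$ on $\partial \Omega^+ \cap G$. On $\Omega^+$ we may therefore invoke Picone's inequality, that is $\mathcal{R}(w, z) \ge 0$ whenever $z > 0$ and $w \ge 0$, with equality only when $w/z$ is locally constant; this is the essential pointwise tool.

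Next, letting $(\theta_n) \subset W_c^{1,\infty}(\R^N)$ be the sequence from condition $(S)$ associated with $v^+$ (which coincides with $v$ on $\Omega^+$), I would test the super- and sub-solution inequalities respectively with the admissible non-negative functions
$$\phi_n := \theta_n^p \frac{(v^p - u^p)^+}{u^{p-1}},\qquad \psi_n := \theta_n^p \frac{(v^p - u^p)^+}{v^{p-1}},$$
both compactly supported and vanishing outside $\Omega^+$. Subtracting the two resulting inequalities, the two $V$-terms cancel identically, and on $\Omega^+$ a direct expansion of $\nabla\phi_n$ and $\nabla\psi_n$ reorganises the difference of the flux integrals into
$$\int_{\Omega^+}\theta_n^p\bigl[\mathcal{R}(v,u) + \mathcal{R}(u,v)\bigr]\,dx \;\le\; \mathcal{E}_n,$$
where $\mathcal{E}_n$ collects all the cross-terms containing $\nabla \theta_n$.

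The heart of the matter, and the step I expect to be the main obstacle, is showing $\mathcal{E}_n \to 0$ using precisely condition $(S)$. Using Young's inequality together with the algebraic identity
$$|\nabla(\theta_n v)|^p - \nabla(\theta_n^p v)\cdot|\nabla v|^{p-2}\nabla v = \mathcal{R}(\theta_n v, v),$$
$\mathcal{E}_n$ is bounded, up to uniform constants, by $\bigl(\int_G \mathcal{R}(\theta_n v, v)\,dx\bigr)^{1/p}$ times a factor that stays bounded because $u,v \in C(\overline{G})$ with $u > 0$, so that the relevant $L^p$-norms on $\mathrm{supp}\,\theta_n$ are controlled. By $(S)$ this quantity tends to $0$, and Fatou's lemma applied to the non-negative left-hand side yields
$$\int_{\Omega^+}\bigl[\mathcal{R}(v,u) + \mathcal{R}(u,v)\bigr]\,dx = 0.$$
The equality case of Picone then forces $v/u$ to be locally constant on each connected component of $\Omega^+$; continuity, together with $v = u$ on $\partial \Omega^+ \cap G$ and $u \ge v$ on $\partial G$, forces this constant to equal $1$ on every component, contradicting $v > u$ on $\Omega^+$ and completing the proof.
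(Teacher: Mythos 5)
First, a point of reference: the paper does not prove Theorem \ref{A52} at all; it is quoted from the literature (Shafrir, Liskevich--Lyakhova--Moroz, Poliakovsky--Shafrir), so there is no in-paper proof to compare with. Your outline does reproduce the skeleton of the proofs in those references: Picone/D\'iaz--Sa\'a test functions $\theta_n^p(v^p-u^p)^+/u^{p-1}$ and $\theta_n^p(v^p-u^p)^+/v^{p-1}$ on $\Omega^+=\{v>u\}$, cancellation of the $V$-terms, reduction to $\int\theta_n^p[\mathcal{R}(v,u)+\mathcal{R}(u,v)]\le\mathcal{E}_n$, and the equality case of Picone. That part is sound.

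The genuine gap is exactly where you anticipate it: the claim that $\mathcal{E}_n\to0$. You assert that $\mathcal{E}_n$ is bounded by $\bigl(\int_G\mathcal{R}(\theta_n v,v)\,dx\bigr)^{1/p}$ times a factor which ``stays bounded because $u,v\in C(\overline G)$ with $u>0$.'' This does not hold: $u$ and $v$ are only in $W^{1,p}_{loc}(G)$, the supports of $\theta_n$ exhaust $G$, and the cross-terms are of the type $\int\theta_n^{p-1}|\nabla\theta_n|\,\frac{v^p}{u^{p-1}}|\nabla u|^{p-1}$ and $\int\theta_n^{p-1}|\nabla\theta_n|\,v\,|\nabla v|^{p-1}$; continuity up to $\partial G$ controls neither $\|\theta_n\nabla u\|_{L^p}$, $\|\theta_n\nabla v\|_{L^p}$ on these growing supports nor the ratio $v/u$ on $\Omega^+$ (which may be unbounded). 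A plain H\"older/Young splitting therefore produces terms such as $\int\theta_n^p|\nabla v|^p$ that cannot be absorbed into the nonnegative quantity $\theta_n^p[L(v,u)+L(u,v)]$ and are not assumed finite. The actual argument in the cited sources is more delicate: one uses the two-sided elementary inequalities for $L$ (e.g.\ $L(w,z)\gtrsim|\nabla w-(w/z)\nabla z|^p$ for $p\ge2$, with the weighted-square analogue for $1<p<2$, as in \cite{Shafrir B}*{Lemma 7.4}) to absorb part of each cross-term into $\int\theta_n^p[L(v,u)+L(u,v)]$, and to recognise the remaining pieces (essentially $\int|v\nabla\theta_n|^p$ and the mixed term $\int\theta_n^{p-1}v|\nabla v|^{p-2}\nabla v\cdot\nabla\theta_n$, which occurs verbatim in the expansion of $\mathcal{R}(\theta_n v,v)$) as quantities controlled precisely by $\int\mathcal{R}(\theta_n v^+,v^+)\to0$, i.e.\ by condition $(S)$; the case $1<p<2$ needs separate inequalities. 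Until this absorption scheme is written out, the proof is incomplete at its central step. Two smaller points also need attention: the test functions need not have compact support in $G$ when $\overline{\Omega^+}$ touches $\partial G$ (one usually replaces $u$ by $u+\epsilon$, or $v$ by $v-\epsilon$, and passes to the limit), and in the equality case a component of $\Omega^+$ may have all of its boundary on $\partial G$, so concluding that the constant equals $1$ requires using $u\ge v$ on $\partial G$ together with positivity of $u$ there, not only the interior boundary $\partial\Omega^+\cap G$.
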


Below we prove a simple sufficient condition for assumption $(S)$ to hold. 

\begin{lemma}\label{A55}
If
$0\le v\in D_{rad}^{1,p}(\mathbb{R}^{N})$ then $v$ satisfies $(S)$.
\end{lemma}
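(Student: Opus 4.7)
The plan is to take $(\theta_n)$ to be the standard family of radial smooth cut-offs, and to verify the $L^1$-vanishing of $\mathcal{R}(\theta_n v,v)$ by an algebraic expansion combined with the Sobolev embedding $D^{1,p}(\mathbb{R}^N)\hookrightarrow L^{p^*}(\mathbb{R}^N)$ and H\"older. Concretely, I would fix $\eta\in C^\infty([0,\infty))$ with $0\le \eta\le 1$, $\eta\equiv 1$ on $[0,1]$ and $\eta\equiv 0$ on $[2,\infty)$, and set $\theta_n(x):=\eta(|x|/n)$. Then $\theta_n\in W^{1,\infty}_c(\mathbb{R}^N)$, $0\le \theta_n\to 1$ pointwise, and $\nabla \theta_n$ is supported in the annulus $A_n:=\{n\le |x|\le 2n\}$ with $|\nabla \theta_n|\le C/n$. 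Inside $\{|x|\le n\}$ one has $\theta_n v=v$, so $\mathcal{R}(\theta_n v,v)=0$ there, and outside $\{|x|\le 2n\}$ one has $\theta_n v\equiv 0$, so $\mathcal{R}(\theta_n v,v)=0$ again; hence the whole integral reduces to one over $A_n$.

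Setting $A:=\theta_n\nabla v$ and $B:=v\nabla\theta_n$, a direct computation using $\nabla(\theta_n^p v)=p\theta_n^{p-1}v\,\nabla \theta_n+\theta_n^p\nabla v$ and $|A|^{p-2}A=\theta_n^{p-1}|\nabla v|^{p-2}\nabla v$ rewrites the integrand as
\[
\mathcal{R}(\theta_n v,v)=|A+B|^p-|A|^p-p\,|A|^{p-2}A\cdot B.
\]
Picone's inequality ensures $\mathcal{R}\ge 0$, and an elementary mean-value estimate gives the uniform majoration $\mathcal{R}\le C(p)\bigl(|A|^{p-1}|B|+|B|^p\bigr)$ for every $p>1$. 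H\"older's inequality then yields
\[
\int_{A_n}|A|^{p-1}|B|\le\|\nabla v\|_{p}^{p-1}\Bigl(\int_{A_n}v^p|\nabla\theta_n|^p\Bigr)^{1/p},
\]
so the whole argument is reduced to proving $\int_{A_n}v^p|\nabla\theta_n|^p\,dx\to 0$.

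For this decisive estimate I would combine the pointwise bound $|\nabla\theta_n|\le C/n$ on $A_n$ with H\"older at exponents $p^*/p$ and $N/p$ to obtain
\[
\int_{A_n}v^p|\nabla\theta_n|^p\,dx\le \frac{C}{n^p}\Bigl(\int_{A_n}v^{p^*}\,dx\Bigr)^{p/p^*}|A_n|^{p/N},
\]
where $|A_n|^{p/N}\sim n^p$ exactly cancels the factor $n^{-p}$. Since $v\in D^{1,p}(\mathbb{R}^N)\subset L^{p^*}(\mathbb{R}^N)$, absolute continuity of the integral forces $\int_{A_n}v^{p^*}\to 0$, so the right-hand side vanishes, and with it both majorising integrals, giving $\int_{\mathbb{R}^N}\mathcal{R}(\theta_n v,v)\,dx\to 0$ as required by $(S)$.

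The step I expect to carry the real content is the last one: the naive radial Strauss--Ni pointwise bound $v(|x|)\lesssim |x|^{-(N-p)/p}$ from Lemma \ref{4.5.} is only borderline, as it yields $\int_{A_n}v^p|\nabla\theta_n|^p=O(1)$ rather than $o(1)$. One is therefore \emph{forced} to integrate $v$ against $|\nabla \theta_n|^p$ through the Sobolev exponent $p^*$ and rely on absolute continuity of $\int v^{p^*}$ to extract the missing decay. Radiality, interestingly, plays no essential role beyond ensuring $v\in D^{1,p}(\mathbb{R}^N)$.
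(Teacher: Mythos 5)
Your proof is correct, but it takes a genuinely different route from the paper. The paper tests condition $(S)$ with the logarithmic cut-offs $\eta_R$ equal to $1$ on $B_R$, to $\log(R^2/r)/\log R$ on $R\le r\le R^2$, and to $0$ outside, so that $|(\eta_R)'_r|\le c/(r\log R)$; it then bounds $\mathcal{R}(\eta_R v,v)$ pointwise via Picone's identity and Shafrir-type inequalities, splitting into the cases $1<p\le 2$ and $p>2$ (the mixed term $|\eta_R v'_r|^{p-2}|v(\eta_R)'_r|^2$ needing a separate H\"older step), and finally invokes the radial Ni decay $v\lesssim |x|^{-(N-p)/p}$ of Lemma \ref{4.5.}, which together with the logarithmic weight produces the decay rate $(\log R)^{-(p-1)}\to 0$. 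You instead use ordinary annular cut-offs, the exact algebraic identity $\mathcal{R}(\theta_n v,v)=|A+B|^p-|A|^p-p|A|^{p-2}A\cdot B$ with $A=\theta_n\nabla v$, $B=v\nabla\theta_n$ (which is correct, since $(\theta_n v)^p/v^{p-1}=\theta_n^p v$), a mean-value bound $\mathcal{R}\le C(p)(|A|^{p-1}|B|+|B|^p)$ valid uniformly for all $p>1$, and then H\"older at the conjugate pair $(p^*/p,\,N/p)$, so that $|A_n|^{p/N}\sim n^p$ cancels $|\nabla\theta_n|^p\lesssim n^{-p}$ and the smallness comes from the tail of $\int v^{p^*}$ via the Sobolev embedding $D^{1,p}\hookrightarrow L^{p^*}$. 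Your diagnosis that the pointwise Ni bound is only borderline for linear cut-offs is exactly right, and your remedy is different from the paper's: the paper buys the missing decay from the logarithmic cut-off, you buy it from absolute continuity of $\int v^{p^*}$. Your argument is arguably cleaner: it needs no case split in $p$, no Shafrir inequalities, and no radial symmetry at all, so it actually proves the stronger statement that every $0\le v\in D^{1,p}(\mathbb{R}^N)$ satisfies $(S)$ — consistent with the paper's own remark that Lemma \ref{A55} is far from optimal. What the paper's log-cutoff technique buys in exchange is that it runs on pointwise decay rather than global $L^{p^*}$ integrability, which is the mechanism behind the constructions, cited in that remark, of functions outside $D^{1,p}(\mathbb{R}^N)$ that still satisfy $(S)$.
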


\begin{proof}
Following \citelist{\cite{Shafrir B}\cite{Liskevich}}, define
$$\eta_{R}(r)=
\begin{cases}
	1,& 0\leq r\leq R\\
	\frac{\log \frac{R^{2}}{r}}{\log R}, & R\leq r\leq R^{2},\\
	0,& r\geq R^{2},
\end{cases}  $$
and note that $|\eta_{R}|\leq 1$ a.e. in $\mathbb{R}^{N}$ and $|\eta'_{R}|\le \frac{c}{ \log R } r^{-1} $.
We are going to show that
$$\int\limits_{\mathbb{R}^{N}} \mathcal{R}(\eta_{R}v,v) \rightarrow 0\quad \text{as} \quad R\rightarrow \infty. $$
Using the Picone's identity \cite{Dias,Allegretto} and inequalities \cite[Lemma 7.4]{Shafrir B},
it is straightforward to deduce the inequalities
\begin{eqnarray}
\mathcal R(\eta_R v ,v)&\le& c_1|v\,(\eta_R)_r'|^p,
\hspace{9em}\qquad (1<p\le 2),\label{e-sub<2}\\
\mathcal R(\eta_R v,v)&\le& c_2|\eta_R v_r'|^{p-2}|v(\eta_R)_r'|^2+c_3|v\,(\eta_R)_r'|^p,
\qquad(p>2).\label{e-sub>2}
\end{eqnarray} 

\smallskip\noindent
{\em Case $1<p\le2$.}
Using \eqref{e-sub<2} and Ni's decay estimate Lemma \ref{4.5.} on $v\in D_{rad}^{1,p}(\mathbb{R}^{N})$, 
$$v\leq c|x|^{-\frac{N-p}{p}},$$
by a direct calculation we obtain
\begin{eqnarray}
\int\limits_{\mathbb{R}^{N}} \mathcal{R}(\eta_{R}v,v)dx 
&\leq&  c_1\int\limits_{R}^{R^{2}}|v (\eta_{R})'_r |^{p} r^{N-1} dr
\le c\int\limits_{R}^{R^{2}} \Big|r^{-\frac{N-p}{p}}\frac{1}{ \log R } r^{-1}\Big|^{ p}r^{N-1} dr\nonumber\\
&\leq& 
\frac{C}{(\log R)^{p-1}}\rightarrow 0 \quad \text{as}\; R\rightarrow\infty.\label{comp-i}
\end{eqnarray}

\smallskip\noindent
{\em Case $p>2$.}
By H\"older and \eqref{comp-i} we conclude
\begin{multline*}
\int_0^{+\infty}|\eta_{R}v'|^{p-2}|v(\eta_{R})'_r)|^{2}r^{N-1}dr \le 
\left(\int_0^{+\infty}|\eta_{R}v'|^pr^{N-1}dr\right)^\frac{p-2}{p}
\left(\int_0^{+\infty}|v(\eta_{R})'_r)|^{p}r^{N-1}dr\right)^\frac{2}{p}
\\
\le c\|v\|_{D^{1,p}(\mathbb{R}^{N})}^{p-2}\left(\int_R^{R^2}|v(\eta_{R})'_r)|^{p}r^{N-1}dr\right)^\frac{2}{p}
\rightarrow 0 \quad \text{as} \; R\rightarrow \infty.
\end{multline*}
Taking into account \eqref{e-sub>2} and once again \eqref{comp-i}, the conclusion follows.
\end{proof}

\begin{remark}
While the statement of Lemma \ref{A55} is sufficient for our purposes, it is far from optimal.
See \cite[Appendix B]{Liskevich} for constructions of radial functions $v\not\in D^{1,p}(\mathbb{R}^{N})$ which satisfy assumption  $(S)$.
\end{remark}

		
\newpage


\end{document}